\documentclass[a4paper,10pt]{amsart}
\usepackage[english]{babel}
\usepackage{amsmath,tikz-cd}
\usepackage{amssymb}
\usepackage[T1]{fontenc}    
\usepackage[utf8]{inputenc} 
\usepackage{lmodern}
\usepackage{mathrsfs}
\usepackage{enumerate}
\usepackage{enumitem}
\usepackage{mathtools,yfonts}
\usepackage{tikz,tkz-euclide}
\usepackage{todonotes}
\usepackage{pgfplots}
\pgfplotsset{compat=newest}
\usetikzlibrary{arrows}
\usepackage{makecell}

\usepackage{xcolor} 
\definecolor{DarkRed}{RGB}{173,0,0}
\definecolor{LightRed}{RGB}{201,0,0}
\usepackage[
    colorlinks=true,
    linkcolor=DarkRed,
    urlcolor=LightRed,
    citecolor=LightRed
]{hyperref}

\usepackage{tcolorbox}
\usepackage[]{algorithm2e}

\newtheorem{thm}{Theorem}[section]

\newtheorem{Proposition}[thm]{Proposition}

\theoremstyle{definition}

\newtheorem{Definition}[thm]{Definition}
\newtheorem{Remark}[thm]{Remark}

\newtheorem{Example}[thm]{Example}

\newtheoremstyle{introthmstyle} {5pt}{5pt} {\itshape} {} {\bfseries} {.} { } {} \theoremstyle{introthmstyle} \newtheorem{introthm}{Theorem}

\definecolor{wwwwww}{rgb}{0.4,0.4,0.4}

\newcommand{\PP}{\mathbb{P}}
\newcommand{\ZZ}{\mathbb{Z}}
\newcommand{\QQ}{\mathbb{Q}}
\newcommand{\RR}{\mathbb{R}}
\newcommand{\CC}{\mathbb{C}}

\newcommand{\OO}{\mathcal{O}}   

\DeclareMathOperator{\Cox}{Cox} 
\DeclareMathOperator{\Spec}{Spec}
\DeclareMathOperator{\Pic}{Pic}

\DeclareMathOperator{\Aut}{Aut}

\DeclareMathOperator{\Mult}{Mult}

\newcommand{\cM}{\overline{\mathcal{M}}}
\DeclareMathOperator{\Bl}{Bl}
\DeclareMathOperator{\Cl}{Cl}
\DeclareMathOperator{\NE}{NE}

\DeclareMathOperator{\Eff}{Eff}

\DeclareMathOperator{\Exc}{Exc}

\DeclareMathOperator{\PsAut}{PsAut}

\DeclareMathOperator{\Par}{Par}
\newcommand{\hkquot}{/\!\!/\!\!/}
\DeclareMathOperator{\Cone}{Cone}
\DeclareMathOperator{\Nef}{Nef}
\DeclareMathOperator{\Mov}{Mov}

\newcommand{\quot}{/\hspace{-1.2mm}/}

\hypersetup{pdfpagemode=UseNone}
\hypersetup{pdfstartview=FitH}

\DeclareMathOperator{\PGL}{PGL}

\DeclareMathOperator{\GL}{GL}

\newcommand{\bb}{\mathbf b}
\newcommand{\avec}{\mathbf a}

\newcommand{\Padd}{\mathcal P^{\rm add}}
\newcommand{\Mtriv}{M^{\rm triv}}

\setlist[enumerate]{
  label=(\roman*),
  before=\normalfont,
  font=\normalfont
}

\begin{document}

\title{Mori theory and symplectic reduction for polygons and quivers}

\author[Alessia Mandini]{Alessia Mandini}
\address{\sc Alessia Mandini\\ Dipartimento di Informatica, Università di Verona, Strada le Grazie 15, 37134 Verona, Italy}
\email{alessia.mandini@univr.it}

\author[Alex Massarenti]{Alex Massarenti}
\address{\sc Alex Massarenti\\ Dipartimento di Matematica e Informatica, Università di Ferrara, Via Machiavelli 30, 44121 Ferrara, Italy}
\email{msslxa@unife.it}

\date{\today}
\subjclass[2020]{Primary 14D20; Secondary 14E30, 14L24, 14H60, 16G20, 53D20.} 
\keywords{Mori dream spaces, configurations of points, parabolic bundles, quiver moduli, polygon spaces, multiplicative polygons, automorphism groups}

\begin{abstract}
We investigate the birational geometry of moduli spaces of polygons and quivers, emphasizing the link between Mori theory and symplectic reduction. On the additive side we relate Mori models of $\Bl_{n-1}\PP^{n-3}$ with GIT quotients of configurations of points on $\PP^1$, Hassett--GIT spaces, parabolic quotients with fixed trivial bundle, quiver moduli spaces, and additive polygon spaces. On the multiplicative side we relate Mori models of $\Bl_n\PP^{n-3}$ with moduli spaces of rank two parabolic bundles with trivial determinant, multiplicative polygons, multiplicative configurations, and compact multiplicative quiver quotients. In this way, the Mori chamber decompositions are matched with wall-crossing for the corresponding additive and multiplicative symplectic quotients. Finally, for the small models appearing in these two constructions, we compute the corresponding groups of algebraic automorphisms.
\end{abstract}

\maketitle
\setcounter{tocdepth}{1}
\tableofcontents

\section*{Introduction}
Moduli spaces frequently admit several different constructions. The same projective variety may arise as a GIT quotient, as a moduli space of parabolic bundles, as a quiver moduli space, as a symplectic quotient, or as a birational model of a blow-up of a projective space. One of the main advantages of Mori dream spaces is that their birational geometry is controlled by a finite amount of data. Indeed, the cone of effective divisors of a Mori dream space admits a decomposition into convex sets, called Mori chambers, and the chambers are the nef cones of birational models of the variety. These varieties were introduced by Hu and Keel in \cite{HuKeel2000}, and they behave in the best possible way from the point of view of Mori's minimal model program.

The aim of this paper is to study parallel, moduli-theoretic pictures from the point of view of Mori chambers. The first one is additive and is governed by the blow-up $Y_{n-3}:=\Bl_{q_1,\ldots,q_{n-1}}\PP^{n-3}$ of $\PP^{n-3}$ at $n-1$ general points. The second one is multiplicative and is governed by the blow-up $Z_n:=\Bl_{q_1,\ldots,q_n}\PP^{n-3}$ of $\PP^{n-3}$ at $n$ general points. These blow-ups lie at the boundary of the classical finite generation results for Cox rings of blow-ups of projective spaces at general points, and their Mori chamber decompositions are rich enough to encode non-trivial wall-crossing phenomena \cite{Mukai2005,CastravetTevelev2006,AraujoMassarenti2016}.

We first consider the additive side. Let $\bb=(b_1,\ldots,b_n)\in\QQ_{>0}^n$. The GIT quotient $((\PP^1)^n)^{ss}(\bb)\quot\PGL_2$ parametrizes weighted configurations of ordered points on $\PP^1$. Such quotients are classical objects in invariant theory and are closely related to the Gelfand--MacPherson correspondence, to linear systems on blow-ups of projective spaces, and to the birational geometry of compactifications of $M_{0,n}$ \cite{GelfandMacPherson1982,Kumar2003,BolognesiMassarenti2021}. After normalizing the weights by $a_i=2b_i/|\bb|$, so that $\sum_i a_i=2$, the same quotient gives the corresponding Hassett--GIT space $\cM_{0,\avec}^{\mathrm{GIT}}$; we refer to \cite{Kapranov1993,Hassett2003,BolognesiMassarenti2021} for these spaces and their birational models.

The quotient $((\PP^1)^n)^{ss}(\bb)\quot\PGL_2$ also has two further, and very useful, interpretations. It is the moduli space of parabolic structures on the fixed trivial bundle $\OO_{\PP^1}^{\oplus 2}$ with the same linearization, and it is the moduli space of semistable representations of the star-shaped quiver with dimension vector $(2,1,\ldots,1)$. Finally, by the Kempf--Ness theorem, this quotient is identified with the additive polygon space parametrizing closed Euclidean polygons with side lengths $b_1,\ldots,b_n$, modulo rotations. Polygon spaces have been widely studied from the symplectic, topological and algebraic viewpoints \cite{Klyachko1994,KapovichMillson1996,HausmannKnutson1997,HausmannKnutson1998,Mandini2014,MandiniPabiniak2017}.

The bridge with Mori theory is given by the divisor class
$D_{\bb,n}$ on $Y_{n-3}$. The Mori model of $Y_{n-3}$ associated with the chamber containing $D_{\bb,n}$ is the common projective variety underlying the additive moduli spaces above. We summarize the main results on the additive side in the following statement. The chain of identifications is proved in Theorem \ref{thm:additive-summary}, while the description of the algebraic automorphism groups of the small models follows from Theorem \ref{thm:additive-small-aut}.

\begin{introthm}\label{introthm:additive}
Let $\bb=(b_1,\ldots,b_n)\in\QQ_{>0}^n$ be such that the semistable locus of $(\PP^1)^n$ with respect to the linearization
$L_{\bb}=\OO_{\PP^1}(b_1)\boxtimes\cdots\boxtimes\OO_{\PP^1}(b_n)$ is non-empty. Set $|\bb|=\sum_i b_i$, $\bar b=|\bb|/2$, and
$\avec=(2b_1/|\bb|,\ldots,2b_n/|\bb|)$. Fix the label $n$, let
$Y_{n-3}=\Bl_{q_1,\ldots,q_{n-1}}\PP^{n-3}$, and let $Y_{\bb,n}$ be the Mori model of $Y_{n-3}$ associated with the divisor class $D_{\bb,n}=(\bar b-b_n)H-\sum_{i=1}^{n-1}(\bar b-b_n-b_i)E_i$. Then there are natural identifications
$$
Y_{\bb,n}\cong (\PP^1)^n_{\bb}\cong \cM_{0,\avec}^{\mathrm{GIT}}
\cong \Mtriv_{\bb}\cong \mathcal M_Q(\bb)\cong \Padd_{\bb}.
$$
Here $(\PP^1)^n_{\bb}$ is the GIT quotient of weighted configurations of $n$ points on $\PP^1$, $\cM_{0,\avec}^{\mathrm{GIT}}$ is the corresponding Hassett--GIT model, $\Mtriv_{\bb}$ is the moduli space of parabolic structures on the fixed trivial bundle $\OO_{\PP^1}^{\oplus 2}$, $\mathcal M_Q(\bb)$ is the moduli space of semistable representations of the star-shaped quiver with dimension vector $(2,1,\ldots,1)$ and stability parameter $\theta_{\bb}=(-|\bb|,2b_1,\ldots,2b_n)$, and $\Padd_{\bb}$ is the additive polygon space with side lengths $b_1,\ldots,b_n$. More precisely, the first five spaces are isomorphic as projective varieties, while the last identification is the Kempf--Ness identification with the corresponding symplectic quotient.

Assume moreover that the chamber containing $D_{\bb,n}$ is contained in $\Mov(Y_{n-3})$, so that $Y_{\bb,n}$ is a small $\QQ$-factorial modification of $Y_{n-3}$. Let
$
\mathcal K_{\bb}
=
\{I\subset\{1,\ldots,n\}\mid \sum_{i\in I}a_i<1\}
$
be the collision complex of the corresponding chamber, and set
$
\Gamma_{\bb}:=\Aut(\mathcal K_{\bb}).
$
Then the algebraic automorphism groups of the projective varieties in the above chain are naturally identified and
$$
\Aut(Y_{\bb,n})
\cong
\Aut((\PP^1)^n_{\bb})
\cong
\Aut(\cM_{0,\avec}^{\mathrm{GIT}})
\cong
\Aut(\Mtriv_{\bb})
\cong
\Aut(\mathcal M_Q(\bb))
\cong
\Aut(\Padd_{\bb})
\cong
\Gamma_{\bb},
$$
where $\Aut(\Padd_{\bb})$ refers to the projective structure induced by the GIT/Kempf--Ness identification. In modular terms, the group $\Gamma_{\bb}$ is the group of relabellings preserving the collision combinatorics of the chamber; it acts on configurations and Hassett markings by relabelling, on $\Mtriv_{\bb}$ by relabelling the parabolic directions, on $\mathcal M_Q(\bb)$ by permuting the legs of the quiver, and on $\Padd_{\bb}$ by permuting the corresponding sides, followed, when necessary, by the canonical identification inside the same chamber.
\end{introthm}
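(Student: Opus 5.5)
We split the argument into the chain of identifications and the computation of automorphism groups.

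\textbf{The chain of identifications.} We start from the Gelfand--MacPherson/Kapranov picture. Fix the $n$-th point at a standard position. The quotient $(\PP^1)^n\quot\PGL_2$ is birational to $\PP^{n-3}$, realized as the space of $n-1$ points on a rational normal curve through $n-1$ fixed general points $q_1,\ldots,q_{n-1}$ (Kapranov's construction).

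\begin{enumerate}
\item \emph{The GIT model as a Mori model.} We show that the GIT quotient is $\mathrm{Proj}$ of the ring of invariants $\bigoplus_k H^0((\PP^1)^n,L_{k\bb})^{\PGL_2}$. Under the Gelfand--MacPherson correspondence this ring is identified with $\bigoplus_k H^0(Y_{n-3},kD_{\bb,n})$. The coefficients $\bar b-b_n$ and $\bar b-b_n-b_i$ come from the multiplicities of the invariant sections along the exceptional divisors. Since $Y_{n-3}$ is a Mori dream space (Castravet--Tevelev), this Proj is exactly the Mori model $Y_{\bb,n}$.
\item \emph{Hassett--GIT model.} Rescaling to $\avec$ does not change the linearization up to a positive multiple. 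Hence $(\PP^1)^n_{\bb}\cong\cM^{\rm GIT}_{0,\avec}$ by definition.
\item \emph{Parabolic structures on the trivial bundle.} A parabolic structure on $\OO^{\oplus2}$ is a choice of lines in $\CC^2$ over the $n$ marked points. Its automorphism group is $\GL_2$, acting through $\PGL_2$. Parabolic semistability with weights proportional to $b_i$ on the fixed trivial bundle reduces, after testing on the subbundles $\OO\subset\OO^{\oplus2}$, to the Hilbert--Mumford inequality $\sum_{p_i=p}b_i\le\bar b$. This gives $\Mtriv_{\bb}\cong(\PP^1)^n_{\bb}$.
\item \emph{Quiver moduli.} By King's theorem, $\mathcal M_Q(\bb)$ is the GIT quotient of $\prod_i\mathrm{Hom}(\CC,\CC^2)$ by $\GL_2\times(\CC^*)^n$ with character $\theta_{\bb}$. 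The injectivity forced by stability lets us first quotient by $(\CC^*)^n$, which yields $(\PP^1)^n$ with linearization $L_{\bb}$; the remaining $\GL_2$ acts through $\PGL_2$.
\item \emph{Polygon space.} The identification with $\Padd_{\bb}$ follows from Kempf--Ness. The moment map for the diagonal $SU(2)$ on $\prod S^2_{b_i}$ is $\sum b_i u_i$, so the symplectic reduction parametrizes closed polygons.
\end{enumerate}

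\textbf{Automorphism groups.} Once the chain is established, all the projective varieties are isomorphic, so it suffices to compute $\Aut(Y_{\bb,n})$ for a small model.

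\begin{enumerate}
\item \emph{Reduction to $\PsAut(Y_{n-3})$.} Since $Y_{\bb,n}$ is a small $\QQ$-factorial modification of $Y_{n-3}$, every automorphism of it induces a pseudo-automorphism of $Y_{n-3}$. Conversely, a pseudo-automorphism lifts to an automorphism of $Y_{\bb,n}$ exactly when its action on $\Pic$ preserves the chamber $\Nef(Y_{\bb,n})$.
\item \emph{Structure of $\PsAut(Y_{n-3})$.} We use the known description of $\PsAut(Y_{n-3})$ for $n-1$ general points (Dolgachev-type results, or the computation from the paper's earlier sections). It acts on $\Pic$ through the symmetric group $S_n$, which permutes the $n$ labels via the Cremona/Kapranov symmetry exchanging the role of the label $n$ with the others. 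Its kernel acting on $\Pic$ is trivial, because the points are general and the automorphisms of $\PP^{n-3}$ fixing $n-1$ general points are trivial for $n\ge5$.
\item \emph{Chamber stabilizer.} The Mori chamber containing $D_{\bb,n}$ is determined by the sign pattern of the walls $\sum_{i\in I}a_i=1$, that is, by the collision complex $\mathcal K_{\bb}$. A permutation $\sigma\in S_n$ preserves the chamber if and only if $\sigma(\mathcal K_{\bb})=\mathcal K_{\bb}$. This yields $\Aut(Y_{\bb,n})\cong\Gamma_{\bb}$.
\item \emph{Modular description.} The description of the action on the other spaces comes from transporting relabellings along the natural identifications.
\end{enumerate}

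\textbf{Main obstacle.} We expect the hardest step to be the rigidity statement that $\PsAut(Y_{n-3})$ is exactly $S_n$, acting faithfully on $\Pic$, and no larger. The fear is that the Weyl-group-type symmetries of the Mori chamber decomposition might contain elements that do not come from relabellings. We would handle this by noting that a pseudo-automorphism must preserve the set of extremal rays of $\Eff(Y_{n-3})$, namely the classes of the $E_i$ and the hyperplanes through $n-3$ of the points. This set consists of boundary divisors of $\cM_{0,n}$ type indexed by subsets, so the action factors through $S_n$.
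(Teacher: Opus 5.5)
Your chain of identifications follows the paper essentially step by step: section ring of $D_{\bb,n}$ via Gelfand--MacPherson/Kumar, matching of collision rules for Hassett and for the trivial bundle, projectivizing the quiver data, and Kempf--Ness. The last part of your automorphism argument is also the paper's: $\Aut(Y_{\bb,n})$ is the stabilizer of the chamber inside $\PsAut(Y_{n-3})\cong S_n$, and this stabilizer is read off as $\Aut(\mathcal K_{\bb})$.

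The real difference is how you obtain $\PsAut(Y_{n-3})\cong S_n$. The paper conjugates by the small map to the symmetric model $\Sigma_{n-3}=(\PP^1)^n\quot\PGL_2$. It then quotes $\Aut(\Sigma_{n-3})\cong S_n$ from Bolognesi--Massarenti and uses $\PsAut(\Sigma_{n-3})=\Aut(\Sigma_{n-3})$ (smooth Fano for $n$ odd, Picard rank one for $n$ even). You work on $Y_{n-3}$ directly, in three steps:
\begin{itemize}
\item $\PsAut(Y_{n-3})\to\GL(\Pic(Y_{n-3}))$ is injective, since an element fixing $H$ and the $E_i$ descends to a projectivity fixing a frame of $n-1$ points of $\PP^{n-3}$;
\item the image is bounded above by the symmetries of $\Eff(Y_{n-3})$;
\item the image is bounded below by relabellings and standard Cremona transformations.
\end{itemize}
This avoids the external computation of $\Aut(\Sigma_{n-3})$ and makes the $S_n$-action on $\Pic$ explicit. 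The paper's route is shorter, given its citations.

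The step that fails as written is your upper bound. The $\binom n2$ extremal rays ($E_i$ and the hyperplanes through $n-3$ of the points) are indexed by $2$-subsets of $\{1,\dots,n\}$. That alone does not force the induced permutation of $2$-subsets to come from $S_n$; you need the linear structure. Put $D_{in}=E_i$ and, for $i,j<n$, $D_{ij}=H-\sum_{k<n,\,k\neq i,j}E_k$. These satisfy $D_{ij}+D_{kl}=D_{ik}+D_{jl}$ for distinct indices. Hence, with $f_n=\tfrac12\bigl(\sum_{k<n}E_k-H\bigr)$ and $f_i=E_i-f_n$, the $f_i$ form a basis of $\Pic(Y_{n-3})\otimes\QQ$ and $D_{ij}=f_i+f_j$.

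A pseudo-automorphism permutes these rigid prime divisors and acts linearly. Two generators have disjoint index pairs exactly when their sum admits a second decomposition as a sum of two generators. So the induced permutation preserves disjointness and is an automorphism of the line graph of $K_n$. By Whitney's theorem, for $n\geq5$ such an automorphism comes from a unique permutation of $\{1,\dots,n\}$.

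The same basis settles a point that both you and the paper leave implicit in the stabilizer step. A direct computation gives $D_{\bb,n}=\sum_i b_if_i$. So $\bb\mapsto D_{\bb,n}$ is equivariant for all of $S_n$, Cremona elements included. Therefore $\sigma$ fixes the chamber of $D_{\bb,n}$ exactly when it preserves the sign pattern of the inequalities $\sum_{i\in I}a_i<1$.
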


The first five spaces are naturally isomorphic as projective varieties, while the last identification is the Kempf--Ness identification with the corresponding symplectic quotient. We also describe explicitly the chamber giving the original blow-up $Y_{n-3}$, and, in the case $Y_4=\Bl_6\PP^4$, the sequence of flips leading from the blow-up chamber to the Fano model. In quiver terms these flips are the wall-crossings obtained by varying the stability parameter of the star-shaped quiver.

On the multiplicative side, the additive closing equation is replaced by a group-valued equation. For a weight $A=(a_1,\ldots,a_n)$ with $0<a_i<1$, let $\mathcal C_i\subset SU(2)$ be the conjugacy class with eigenvalues $\exp(\pi i a_i)$ and $\exp(-\pi i a_i)$. The multiplicative polygon space is the compact unitary character variety of the punctured sphere with prescribed local monodromy. Such character varieties and their symplectic structures play a central role in the study of representation spaces and moduli spaces of flat connections \cite{Goldman1984,GuruprasadHuebschmannJeffreyWeinstein1997,AlekseevMalkinMeinrenken1998}. By the Mehta--Seshadri correspondence \cite{MehtaSeshadri1980}, this character variety is identified with the moduli space $M_A$ of rank two parabolic bundles on $\PP^1$ with trivial determinant and weights $A$. In the rank two genus zero case these spaces have been studied in relation with representation spaces, elliptic surfaces, blow-ups, and automorphism groups \cite{Bauer1991,BiswasHollaKumar2010,Casagrande2015,AraujoFassarellaKaurMassarenti2019}.

The same multiplicative data have two further descriptions. First, each conjugacy class $\mathcal C_i$ is identified with $\PP^1$ by sending an element to its eigenline. This yields a space of multiplicative configurations, which is a configuration-theoretic incarnation of the unitary character variety. Secondly, the same equation can be read as a compact multiplicative quiver quotient. This compact picture is related to the algebraic theory of multiplicative quiver varieties and multiplicative projective algebras, studied in connection with middle convolution, the Deligne--Simpson problem, and character varieties \cite{CrawleyBoeveyShaw2006,Yamakawa2008}.

The Mori-theoretic model on the multiplicative side is controlled by the Bauer--Mukai chamber decomposition of the demi-hypercube. Mukai's description relates these chambers to parabolic bundles and to finite generation of Nagata invariant rings \cite{Mukai2005}. If $A$ belongs to a chamber of this decomposition, the corresponding Mori model $Z_A$ of $Z_n$ is naturally isomorphic to $M_A$. We summarize the main results on the multiplicative side in the following statement. The chain of identifications is proved in Theorem \ref{thm:multiplicative-summary}, while the statements on algebraic automorphisms follow from Theorems \ref{thm:aut-central-multiplicative-polygons} and \ref{thm:aut-general-multiplicative-polygons}.

\begin{introthm}\label{introthm:multiplicative}
Let $A=(a_1,\ldots,a_n)\in\Delta$ be a generic weight, with $0<a_i<1$ for all $i$. Let
$Z_n=\Bl_{q_1,\ldots,q_n}\PP^{n-3}$, let $C_A$ be the chamber of the Bauer--Mukai decomposition of $\Delta$ containing $A$, and let $Z_A$ be the corresponding Mori model of $Z_n$. Then there are natural identifications
$$
Z_A\cong M_A\cong \Mult_A\cong
\operatorname{Conf}^{\operatorname{mult}}_A(\PP^1)/SU(2)
=
\mathcal M_Q^{\operatorname{mult},c}(A).
$$
Here $M_A$ is the moduli space of rank two parabolic bundles on $\PP^1$ with trivial determinant and parabolic weights $A$, $\Mult_A$ is the multiplicative polygon space, or compact unitary character variety, with prescribed conjugacy classes determined by the eigenvalues $\exp(\pm \pi i a_i)$, $\operatorname{Conf}^{\operatorname{mult}}_A(\PP^1)/SU(2)$ is the corresponding eigenline description of the same quotient, and $\mathcal M_Q^{\operatorname{mult},c}(A)$ is the compact multiplicative quiver quotient. More precisely, $Z_A\cong M_A$ is an isomorphism of projective varieties, while $M_A\cong\Mult_A$ is the Mehta--Seshadri correspondence: it is a homeomorphism and, on the stable locus, a real analytic symplectomorphism. The remaining identifications are induced by the eigenline description of the conjugacy classes and by the compact multiplicative quiver presentation.

Let $El\cong(\ZZ/2\ZZ)^{n-1}$ be the group of elementary transformations, indexed by even subsets of $\{1,\ldots,n\}$, and let $El_A\subset El$ be the subgroup preserving the Bauer--Mukai chamber of $A$. For the weights $A$ in the range of Theorem \ref{thm:aut-general-multiplicative-polygons}, the algebraic automorphism groups of the projective varieties in the above chain are naturally identified and
$$
\Aut(Z_A)
\cong
\Aut(M_A)
\cong
\Aut(\Mult_A)
\cong
\Aut(\operatorname{Conf}^{\operatorname{mult}}_A(\PP^1)/SU(2))
\cong
\Aut(\mathcal M_Q^{\operatorname{mult},c}(A))
\cong
El_A,
$$
where the automorphism groups of $\Mult_A$, $\operatorname{Conf}^{\operatorname{mult}}_A(\PP^1)/SU(2)$ and $\mathcal M_Q^{\operatorname{mult},c}(A)$ are taken with respect to the projective structures transported from $M_A$. In the central case $A_F=(1/2,\ldots,1/2)$ one has $El_{A_F}=El$, and hence the common algebraic automorphism group is $(\ZZ/2\ZZ)^{n-1}$. In modular terms, an elementary transformation changes the parabolic bundle by an elementary modification at an even set of marked points; in the multiplicative polygon, eigenline and quiver descriptions it induces the corresponding operation on the prescribed unitary monodromy data.
\end{introthm}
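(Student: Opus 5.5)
The plan is to prove the chain of identifications first and then handle the automorphism groups. The first statement, $Z_A\cong M_A$, is Mukai's theorem. The Cox ring of $Z_n=\Bl_n\PP^{n-3}$ is the Nagata invariant ring for the $\mathbb G_a^{n-3}$-action studied in \cite{Mukai2005}. The Bauer--Mukai chambers of the demi-hypercube $\Delta$ are the images of Mori chambers under the linear map sending a weight $A$ to the divisor class
\[D_A=\Big(\tfrac{n-2}{2}-\tfrac{|A|}{2}\Big)H-\sum_i\Big(\tfrac{n-3}{2}-\tfrac{|A|}{2}+a_i\Big)E_i,\]
up to normalization. I will realize $M_A$ as $\operatorname{Proj}$ of the section ring of $D_A$ on $Z_n$. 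Concretely, every $A$-stable parabolic bundle of trivial determinant has underlying bundle $\OO\oplus\OO$, or can be brought to a fixed degree by elementary transformations, and then Bauer's description embeds the generic stratum as an open subset of $\PP^{n-3}$. Comparing the $A$-stability inequalities with the walls $\sum_{i\in I}a_i-\sum_{i\notin I}a_i=|I|-\text{const}$ shows that the ample cone of $M_A$ pulls back to the chamber $C_A$.

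Next I identify $M_A$ with $\Mult_A$ through Mehta--Seshadri \cite{MehtaSeshadri1980}. Stable parabolic bundles correspond to irreducible unitary representations $\rho$ with $\rho(\gamma_i)\in\mathcal C_i$ and $\prod_i\rho(\gamma_i)=1$. For generic $A$ there are no reducible solutions, so the correspondence is a homeomorphism, and it is a real analytic symplectomorphism for the Atiyah--Bott/Goldman forms. The eigenline map $\mathcal C_i\to\PP^1$, $g\mapsto$ its $e^{\pi i a_i}$-eigenline, is an $SU(2)$-equivariant diffeomorphism $\mathcal C_i\cong SU(2)/U(1)\cong\PP^1$. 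This gives $\Mult_A=\operatorname{Conf}^{\rm mult}_A(\PP^1)/SU(2)$ essentially by definition, and the quiver quotient $\mathcal M_Q^{\rm mult,c}(A)$ is the same set of equations, read as one-dimensional leg data attached to the central vertex $\CC^2$. All projective structures on these spaces are transported from $M_A$.

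For the automorphisms, every automorphism group in the chain is $\Aut(Z_A)$ by transport of structure, so it suffices to compute $\Aut(Z_A)$. Here I use the results already stated, Theorems \ref{thm:aut-central-multiplicative-polygons} and \ref{thm:aut-general-multiplicative-polygons}. The argument goes as follows.
\begin{enumerate}
\item An automorphism of the small $\QQ$-factorial modification $Z_A$ induces a pseudo-automorphism of $Z_n$. It preserves $\Eff$, $\Mov$, and the chamber $C_A$.
\item The group of such pseudo-automorphisms acts on $\Pic(Z_n)$ through the Weyl-type group generated by $S_n$ and the Cremona/elementary transformations, namely $El\rtimes S_n$. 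Generality of the points rules out the permutations, except through elementary transformations, since the $q_i$ are general.
\item The kernel of the action on $\Pic$ is trivial, because an automorphism fixing all $E_i$ and $H$ descends to an automorphism of $\PP^{n-3}$ fixing $n\geq n-1$ general points, and so is the identity.
\end{enumerate}
Hence $\Aut(Z_A)=El_A$. In the central case the chamber is the unique $El$-invariant one, so $El_{A_F}=El$.

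The main obstacle is step (ii): showing that every element of $El_A$ is actually realized by a biregular automorphism, and that nothing else survives. For realization, I would first check that elementary modification along an even set $I$ sends $A$-stable bundles to $A'$-stable bundles with $a'_i=1-a_i$ for $i\in I$. When $A'$ lies in $C_A$, this gives an isomorphism $M_A\to M_{A'}=M_A$, which is algebraic. The exclusion of everything else is exactly the content of the cited theorems.
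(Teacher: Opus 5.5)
Your overall route is the paper's. You get $Z_A\cong M_A$ from Mukai's matching of the Bauer--Mukai chambers and polarizations with the Mori chambers of $Z_n$ (Theorem~\ref{thm:BM-model}), and $M_A\cong\Mult_A$ from Mehta--Seshadri. The eigenline and quiver identifications follow by unwinding definitions. The automorphism groups come from transporting $\Aut(M_A)=El_A$ and $\Aut(M_{A_F})=El$ from \cite{AraujoFassarellaKaurMassarenti2019} along these isomorphisms. As a proof resting on these citations it is fine. Several details you add on top are wrong, however, and should not be presented as arguments.

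\emph{The class $D_A$.} Your explicit $D_A$ is not the class of the chamber of $A$. It has $y+x_i=\tfrac12-a_i$ and $\delta(D_A)=(n-2)y+\sum_jx_j=\tfrac{4-n}{2}$. Hence $\varphi_i(D_A)=\tfrac{2a_i-1}{n-4}\neq a_i$, and no rescaling helps, since $\varphi$ is a ratio. At $A_F$ your class equals $\tfrac{n-4}{4}(H-\sum_iE_i)$, which is not even effective. Inverting $\varphi$ with $\delta=1$ gives
\[
D_A=\frac{|A|-1}{2}\,H-\sum_{i=1}^n\Bigl(\frac{|A|-1}{2}-a_i\Bigr)E_i,
\]
which at $A_F$ is $-\tfrac14K_{Z_n}$, as it should be for the Fano model.

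\emph{Underlying bundles.} It is false that every $A$-stable parabolic bundle with trivial determinant has underlying bundle $\OO\oplus\OO$. For $n=7$ and $A_F$, the bundle $\OO(1)\oplus\OO(-1)$ with general parabolic lines is stable. Its line subbundles are $\OO(1)$, of parabolic slope $1<\tfrac74$, and subbundles of degree $\le-1$. A degree $-1$ subbundle contains at most three general parabolic lines, so its slope is at most $\tfrac12$. A subbundle of degree $\le-2$ has slope at most $-2+\tfrac72<\tfrac74$. Since you cite Mukai anyway, drop this sketch rather than repair it.

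\emph{The automorphism argument.} Step (ii) carries all the content and is only asserted. You need that $\PsAut(Z_n)$ acts on $N^1(Z_n)$ exactly through the normal subgroup $El$ of $El\rtimes S_n$, with $\operatorname{el}_R$ acting on $\Delta$ by $A\mapsto A^R$. The phrase ``generality of the points rules out the permutations'' does not prove this. You then fall back on the cited theorem, which already gives $\Aut(M_A)=El_A$, so steps (i)--(iii) are superfluous and your proof reduces to the paper's.

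If you want (i)--(iii) to stand on their own, in parallel with the paper's additive Theorem~\ref{thm:additive-small-aut}, take $\Aut(M_{A_F})=El$ as the input. For $n$ odd, $M_{A_F}$ is a smooth Fano small modification of $Z_n$. Every pseudo-automorphism preserves $-K$ and is therefore biregular, so $\PsAut(Z_n)\cong El$. Then (i) and (iii), together with the fact that $\operatorname{el}_R\colon M_A\to M_{A^R}$ carries the chamber of $A$ to that of $A^R$, give $\Aut(Z_A)=El_A$.

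Finally, $El_{A_F}=El$ is immediate from $A_F^R=A_F$. Your ``unique $El$-invariant chamber'' argument is unnecessary, and it does not apply for $n$ even, where $A_F$ lies on the walls $H_I=n/2$.
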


Here $Z_A\cong M_A$ is an isomorphism of projective varieties, while $M_A\cong\Mult_A$ is the Mehta--Seshadri correspondence; in particular, on the stable locus it is a real analytic symplectomorphism.

We stress that the additive and the multiplicative pictures are parallel but not identical. In the additive case the projective algebraic structure is built in from the beginning, since the polygon space is identified with a GIT quotient. In the multiplicative case the quotient $\Mult_A$ is naturally a compact symplectic quotient, and the complex projective structure used in this paper is the one transported from the parabolic moduli space $M_A$. This distinction is important in the study of automorphisms below.

The following table summarizes the two dictionaries developed in the paper. The last column records the nature of the identifications and the place where they are proved.

\begin{center}
\renewcommand{\arraystretch}{1.35}
\tiny
\begin{tabular}{p{0.30\textwidth}|p{0.30\textwidth}|p{0.30\textwidth}}
\textbf{Additive side}
&
\textbf{Multiplicative side}
&
\textbf{Nature of the identification}
\\
\hline
Model $Y_{\bb,n}$ of
$
Y_{n-3}=\Bl_{q_1,\ldots,q_{n-1}}\PP^{n-3}
$
&
Model $Z_A$ of
$
Z_n=\Bl_{q_1,\ldots,q_n}\PP^{n-3}
$
&
Birational Mori models. Theorems \ref{thm:additive-summary} and \ref{thm:multiplicative-summary}.
\\
\hline
Weighted configuration quotient
$(\PP^1)^n_{\bb}$
and Hassett--GIT model
$\cM_{0,\avec}^{\mathrm{GIT}}$
&
Multiplicative configuration quotient
$\operatorname{Conf}^{\operatorname{mult}}_A(\PP^1)/SU(2)$
&
On the additive side the identification is algebraic and follows from the same collision rule. On the multiplicative side the identification is the eigenline description of unitary conjugacy classes. See Proposition \ref{prop:git-hassett-fixed} and Theorem \ref{thm:multiplicative-summary}.
\\
\hline
Fixed-trivial-bundle parabolic quotient
$\Mtriv_{\bb}$
&
Trivial-determinant parabolic bundle moduli space
$M_A$
&
The additive space parametrizes parabolic structures on the fixed bundle $\OO_{\PP^1}^{\oplus 2}$. The multiplicative space parametrizes rank two parabolic bundles with trivial determinant and variable underlying bundle. See Proposition \ref{prop:fixed-trivial-git} and Theorem \ref{thm:BM-model}.
\\
\hline
Ordinary additive quiver moduli space
$\mathcal M_Q(\bb)$
&
Compact multiplicative quiver quotient
$\mathcal M_Q^{\operatorname{mult},c}(A)$
&
The additive identification is algebraic, since the outer scalar actions projectivize the vectors. The multiplicative equality is a quotient-theoretic identity with $\Mult_A$. See Section \ref{Sec2} and Proposition \ref{prop:mult-quiver-polygon}.
\\
\hline
Additive polygon space
$\Padd_{\bb}$
&
Multiplicative polygon space
$\Mult_A$
&
The additive identification is the Kempf--Ness identification with the GIT quotient. The multiplicative identification is the Mehta--Seshadri correspondence, a homeomorphism and, on the stable locus, a real analytic symplectomorphism.
\end{tabular}
\end{center}

We also compute automorphism groups. In the multiplicative case we will use the description of automorphisms of moduli spaces of rank two parabolic bundles with trivial determinant. Elementary transformations indexed by even subsets of the markings generate a group isomorphic to $(\ZZ/2\ZZ)^{n-1}$. For the central weight this is the full algebraic automorphism group, and for the small modifications considered here the group is the subgroup of elementary transformations preserving the chamber. Via the multiplicative dictionary, the same statement gives the algebraic automorphism groups of the corresponding multiplicative polygons, multiplicative configurations, and compact multiplicative quiver quotients.

In the additive case we prove the analogous statement for small $\QQ$-factorial modifications of $Y_{n-3}$. If $\mathcal C$ is a Mori chamber, and $Y_{\mathcal C}$ is the corresponding small model, then $\Aut(Y_{\mathcal C})$ is the group of automorphisms of the collision complex attached to the chamber. Therefore the automorphisms are precisely the relabellings preserving the corresponding collision combinatorics. Through the additive dictionary, the same group is the group of algebraic automorphisms of the corresponding configuration quotients, Hassett--GIT spaces, parabolic quotients with fixed trivial bundle, quiver moduli spaces, and additive polygon spaces. We emphasize that these are algebraic, or projective, automorphism groups. They should not be confused with the full symplectomorphism group of the underlying symplectic manifolds, which is in general much larger.

\subsection*{Organization of the paper}
All through the paper we work over an algebraically closed field of characteristic zero, unless otherwise stated. In Section \ref{Sec1} we recall the basic facts on Mori dream spaces and the Mori chamber decompositions of the blow-ups of projective spaces that will be used later. In Section \ref{Sec2} we develop the additive dictionary, passing from configurations of points on $\PP^1$ to parabolic quotients with fixed trivial bundle, quiver moduli spaces, and additive polygon spaces. In Section \ref{Sec3} we develop the multiplicative dictionary, relating parabolic bundles with trivial determinant, multiplicative polygons, multiplicative configurations, and compact multiplicative quiver quotients. Finally, in Section \ref{Sec4} we compute the automorphism groups of the small models appearing in the two constructions.

\subsection*{Acknowledgments}
The authors are members of GNSAGA (INdAM). A.~Massarenti was supported by the PRIN 2022 project 20223B5S8L, “Birational Geometry of Moduli Spaces and Special Varieties”. We thank Pieter Belmans for his valuable comments on an earlier version of this preprint.

\section{Mori dream spaces and blow-ups of projective spaces}\label{Sec1}

Let $X$ be a normal projective variety over an algebraically closed field of characteristic zero. We will denote by $N^1(X)$ the real vector space of $\RR$-Cartier divisors modulo numerical equivalence, and by $N_1(X)$ the dual vector space of numerical classes of $1$-cycles. The nef cone, movable cone, and effective cone are denoted respectively by $\Nef(X)$, $\Mov(X)$, and $\Eff(X)$. Thus $\Nef(X)$ is the closed cone generated by nef divisor classes, $\Mov(X)$ is the cone generated by movable divisor classes, and $\Eff(X)$ is the cone generated by effective divisor classes. We have inclusions
$$
\Nef(X)\subset \Mov(X)\subset \Eff(X).
$$
The Mori cone $\NE(X)\subset N_1(X)$ is the closed cone generated by classes of effective curves.

A birational map $f:X\dashrightarrow Y$ to a normal projective variety is a birational contraction if $f^{-1}$ does not contract any divisor. It is a small $\QQ$-factorial modification if $Y$ is $\QQ$-factorial and $f$ is an isomorphism in codimension one. If $f:X\dashrightarrow Y$ is a small $\QQ$-factorial modification, then the pull-back identifies $N^1(Y)$ with $N^1(X)$ and sends $\Eff(Y)$ and $\Mov(Y)$ isomorphically onto $\Eff(X)$ and $\Mov(X)$.

\begin{Definition}\label{def:MDS}
A normal projective $\QQ$-factorial variety $X$ is a Mori dream space if the following conditions hold:
\begin{enumerate}
\item $\Pic(X)$ is finitely generated;
\item $\Nef(X)$ is generated by finitely many semiample divisor classes;
\item there is a finite collection of small $\QQ$-factorial modifications $f_i:X\dashrightarrow X_i$ such that each $X_i$ satisfies the second condition and
$$
\Mov(X)=\bigcup_i f_i^*\Nef(X_i).
$$
\end{enumerate}
\end{Definition}

For a Mori dream space, the fan on $\Mov(X)$ obtained from the nef cones of its small $\QQ$-factorial modifications extends to a fan on the whole effective cone.

\begin{Definition}\label{def:MCD}
Let $X$ be a Mori dream space. The Mori chamber decomposition of $\Eff(X)$ is the fan whose maximal cones are obtained as follows. Let $g_i:X\dashrightarrow Y_i$ run through the finitely many birational contractions from $X$ to Mori dream spaces. If $\Exc(g_i)$ is the set of exceptional prime divisors of $g_i$, then the corresponding maximal chamber is
$$
\Cone\left(g_i^*\Nef(Y_i),\Exc(g_i)\right).
$$
\end{Definition}

The Cox ring viewpoint gives the bridge with variation of GIT. If $\Cl(X)$ is finitely generated, the Cox ring is
$$
\Cox(X)=\bigoplus_{[D]\in \Cl(X)} H^0(X,\OO_X(D)),
$$
with its natural $\Cl(X)$-grading. When $\Cox(X)$ is finitely generated, $X$ is obtained as a GIT quotient of an open subset of $\Spec\Cox(X)$ by the Picard quasi-torus $\Spec \CC[\Cl(X)]$. In this description, the GIT chambers for this torus action are the Mori chambers of $X$. In general the stable base locus decomposition of $\Eff(X)$ is coarser than the Mori chamber decomposition.

We now specialize to blow-ups of projective spaces. Let $q_1,\ldots,q_k\in \PP^r$ be points in general position, and set
$$
X^r_k:=\Bl_{q_1,\ldots,q_k}\PP^r.
$$
We will denote by $H$ the pull-back of the hyperplane class and by $E_i$ the exceptional divisor over $q_i$. Then
$$
\Pic(X^r_k)=\ZZ H\oplus \ZZ E_1\oplus\cdots\oplus \ZZ E_k.
$$
The canonical class is $K_{X^r_k}=-(r+1)H+(r-1)\sum_{i=1}^kE_i$.

Recall that a normal projective variety $X$ is log Fano if there exists an effective $\QQ$-divisor $\Delta$ such that the pair $(X,\Delta)$ is klt and $-(K_X+\Delta)$ is ample. By \cite[Corollary 1.3.2]{BCHM2010}, log Fano varieties are Mori dream spaces.

Let us recall that the smooth Fano condition has a standard symplectic counterpart. A compact symplectic manifold $(M,\omega)$ is called monotone if $[\omega]=\lambda c_1(TM)$ for some $\lambda>0$. Thus, if $X$ is a smooth Fano variety and $\omega$ is a Kähler form whose class is proportional to $c_1(X)$, then $(X,\omega)$ is monotone. In this sense, monotonicity is the symplectic shadow of the positivity of the anticanonical class.

For a log pair $(X,\Delta)$ with $\Delta=\sum_i(1-\beta_i)D_i$, the corresponding symplectic condition would be the relative, or log, monotonicity relation
$$
[\omega]=\lambda\left(c_1(TM)-\sum_i(1-\beta_i)[D_i]\right),
\qquad \lambda>0,
$$
where the $D_i$ are symplectic divisors and $[D_i]$ denotes their cohomology classes. This is the natural analogue of the positivity of $-(K_X+\Delta)$, but we use it only as an analogy, not as a standard replacement for the algebraic notion of a log Fano pair.

The klt condition is even less intrinsic from the symplectic point of view. It is a birational condition, since it is defined by discrepancies on all resolutions. In the smooth normal crossing case, where $X$ is smooth and $\Delta=\sum_i(1-\beta_i)D_i$ has simple normal crossing support, it reduces to the coefficient condition $\beta_i>0$ for all $i$.

\begin{thm}\label{thm:AM-logfano} 
Let $X^r_k$ be the blow-up of $\PP^r$ at $k$ points in general position. Then $X^r_k$ is log Fano precisely in the following cases: 
\begin{enumerate} \item $r=2$ and $k\leq 8$; \item $r=3$ and $k\leq 7$; \item $r=4$ and $k\leq 8$; \item $r>4$ and $k\leq r+3$. 
\end{enumerate} In particular, in all dimensions $r\geq 2$, the blow-ups $X^r_{r+2}$ and $X^r_{r+3}$ are Mori dream spaces. 
\end{thm}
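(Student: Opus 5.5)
This is the Araujo--Massarenti classification of log Fano blow-ups \cite{AraujoMassarenti2016}. The argument has two directions: sufficiency, by exhibiting a boundary $\Delta$, and necessity, by an obstruction to being log Fano. The final claim, that $X^r_{r+2}$ and $X^r_{r+3}$ are Mori dream spaces, then follows immediately from \cite[Corollary 1.3.2]{BCHM2010}. One only has to check that $k=r+2$ and $k=r+3$ lie in the list for every $r\geq 2$. For $r=2$ we need $k=4,5\leq 8$; for $r=3$, $k=5,6\leq 7$; for $r=4$, $k=6,7\leq 8$; and for $r>4$ the bound $k\leq r+3$ holds.

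\emph{Sufficiency.} For $r=2$, $k\leq 8$, the surface $X^2_k$ is a del Pezzo surface, hence Fano, so $\Delta=0$ works. In general we need a klt pair $(X^r_k,\Delta)$ with $-(K+\Delta)$ ample. Write
$$-K_{X^r_k}=(r+1)H-(r-1)\sum_i E_i.$$
The plan is to take
$$\Delta=\sum_j \epsilon_j D_j + \text{(small ample correction)},$$
where the $D_j$ are strict transforms of special divisors through the points, such as hyperplanes through $r$ of the points or quadrics and secant varieties of the rational normal curve through $r+3$ points. Then
$$-(K+\Delta)=(r+1-\textstyle\sum\epsilon_j\deg D_j)H-\sum_i\bigl(r-1-\sum_j\epsilon_j\operatorname{mult}_{q_i}D_j\bigr)E_i,$$
and the coefficients are tuned so that this class lies in the interior of $\Nef(X^r_k)$. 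The nef cone is known explicitly for $k\leq r+3$: it is cut out by lines through two points, rational normal curves through $r+3$ points, and the related classes from Castravet--Tevelev and Mukai. Klt-ness holds because the chosen $D_j$ are general members of base-point-free or mildly singular linear systems and have small coefficients; a log resolution check reduces this to bounding discrepancies along the linear spans of subsets of points. The sporadic cases $(3,7)$ and $(4,8)$ are handled separately. For $(3,7)$ we use the Weddle/quartic geometry, or the fact that $X^3_7$ is a weak Fano double cover situation. For $(4,8)$ we use the fact that $8$ points in $\PP^4$ lie on a rational normal curve via the Gale duality $X^4_8\leftrightarrow X^2_8$ (the Coble-type association), and construct $\Delta$ from the anticanonical structure.

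\emph{Necessity.} If $X$ is log Fano, then $X$ is a Mori dream space. Moreover $-K_X$ is big, since $-K_X=-(K_X+\Delta)+\Delta$ is ample plus effective. Finally, every extremal ray of $\NE$ is $K_X$-negative up to the $\Delta$-correction. The obstruction in the excluded cases is one of the following. For $k\geq r+4$ with $r>4$, and for $(3,8)$ and $(4,9)$, one shows $-K_X$ is not big, or that the anticanonical class fails positivity on some covering family of curves. For large $k$ we would use Mukai's result that $X^r_k$ is not a Mori dream space when $\frac1{r+1}+\frac1{k-r-1}\leq\frac12$, combined with non-bigness of $-K$ (as for $X^2_9$, where $-K$ has $K^2=0$) in the boundary cases. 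For $r=3$, $k=8$: $-\frac12K_{X^3_8}$ is the pencil of quadrics through $8$ points, so $-K$ is not big. For $r=4$, $k=9$ and $r>4$, $k=r+4$, we use the rational normal curve $C$ through $r+3$ points, or a covering family of elliptic/rational curves. On this family the intersection number $-K\cdot C\leq 0$ while the curves move, contradicting bigness.

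\emph{Main obstacle.} The hard step is sufficiency in the borderline cases $k=r+3$ and $(4,8)$. There one must produce a specific $\Delta$ and verify both the ampleness of $-(K+\Delta)$, using the explicit Mori cone, and klt singularities. I would first try $\Delta$ supported on secant varieties of the rational normal curve through the points, since these are the natural fixed divisors in the Mori chamber decomposition.
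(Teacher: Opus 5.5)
Your opening paragraph is exactly the paper's proof, and at that level it is complete. The paper quotes the classification from \cite[Theorem 1.3]{AraujoMassarenti2016}. It deduces the Mori dream space claim from \cite[Corollary 1.3.2]{BCHM2010} once $k=r+2,r+3$ are seen to lie in the list, and it notes that \cite{CastravetTevelev2006} also gives this directly. The sketch you add of the classification itself does not work as written, for the following reasons.

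\textbf{Necessity.} You need nothing beyond the ingredient you mention only in passing. The log Fano list coincides exactly with the Mori dream space range $\frac{1}{r+1}+\frac{1}{k-r-1}>\frac12$ of Castravet--Tevelev and Mukai. Mukai's non-finite generation holds under the non-strict inequality, so it already covers every excluded pair, including the boundary cases $(2,9)$ and $(3,8)$. Together with log Fano $\Rightarrow$ Mori dream space, this settles necessity.

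Your alternative obstruction via the rational normal curve $C$ through $r+3$ of the points is wrong. That curve is unique, hence rigid, so $-K\cdot C<0$ says nothing about bigness. Indeed, on $X^r_{r+3}$ itself one has $-K\cdot C=r(r+1)-(r-1)(r+3)=3-r<0$ for $r\geq 4$, yet $X^r_{r+3}$ is log Fano. Similarly, the fact that $|-\tfrac12 K_{X^3_8}|$ is a pencil does not by itself exclude bigness. The correct reason is that $-K_{X^3_8}$ is nef with $(-K)^3=0$.

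\textbf{Sufficiency.} A $\Delta$ built from general members of base-point-free systems with small coefficients cannot work once $r\geq 4$. The strict transform $\ell_{ij}$ of the line through $q_i,q_j$ satisfies $-K\cdot \ell_{ij}=3-r<0$. Write $\Delta\equiv\delta H-\sum\mu_iE_i$. Ampleness of $-(K+\Delta)$ then forces $\ell_{ij}\subset\operatorname{Supp}\Delta$ with $\operatorname{mult}_{\ell_{ij}}\Delta\geq \mu_i+\mu_j-\delta>r-3$. On the other hand, klt-ness requires $\operatorname{mult}_{\ell_{ij}}\Delta<r-1$.

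More generally, the span of any $s+1$ of the points is covered by degree $s$ rational normal curves through those points, and $-K$ has degree $2s+1-r$ on them. Hence every such span with $2s+1<r$ must lie in $\operatorname{Supp}\Delta$. Producing a $\Delta$ that meets these forced lower bounds while staying klt is precisely the content of \cite{AraujoMassarenti2016}; it is not a routine log resolution check.

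Finally, your $(4,8)$ step rests on a false premise. Eight general points of $\PP^4$ do not lie on a rational normal curve, since such a curve is already determined by $7$ general points.
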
 
\begin{proof} The log Fano classification is \cite[Theorem 1.3]{AraujoMassarenti2016}. The Mori dreamness statement follows from the finite generation results of \cite{CastravetTevelev2006}; it also follows from the log Fano statement and \cite[Corollary 1.3.2]{BCHM2010}. 
\end{proof}

\subsection*{The case $k=r+2$}

Let
$$
Y_r:=X^r_{r+2}=\Bl_{q_1,\ldots,q_{r+2}}\PP^r.
$$
We write a real divisor class on $Y_r$ as
$$
D=yH+\sum_{i=1}^{r+2}x_iE_i.
$$
The following theorem gives the effective cone, the movable cone, and the Mori chamber decomposition of $Y_r$.

\begin{thm}\label{thm:MCD-rplus2}
The variety $Y_r$ is a Mori dream space. Its effective cone is defined by the inequalities
$$
y+x_i\geq 0,\qquad i=1,\ldots,r+2,
$$
and
$$
ry+\sum_{i=1}^{r+2}x_i\geq 0.
$$
The Mori chamber decomposition of $\Eff(Y_r)$ is induced by the following hyperplane arrangement:
$$
(2-s)y-\sum_{i\in I}x_i=0,
\qquad
I\subset{1,\ldots,r+2},\quad \sharp I=s-1,
$$
and
$$
(r-s+1)y-\sum_{i\in I}x_i+\sum_{i=1}^{r+2}x_i=0,
\qquad
I\subset{1,\ldots,r+2},\quad \sharp I=s,
$$
where $2\leq s\leq (r+3)/2$.

Moreover, the movable cone is defined by the inequalities above, together with
$$
x_i\leq 0,\qquad i=1,\ldots,r+2,
$$
and
$$
(r-1)y-\sum_{i\in I}x_i+\sum_{i=1}^{r+2}x_i\geq 0,
\qquad
I\subset{1,\ldots,r+2},\quad \sharp I=2.
$$
All small $\QQ$-factorial modifications of $Y_r$ are smooth. Adjacent chambers in $\Mov(Y_r)$ are separated by one of the hyperplanes above and the corresponding birational map is a flip replacing a projective space $\PP^{s-2}$ by a projective space $\PP^{r+1-s}$.
\end{thm}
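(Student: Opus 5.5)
The strategy is to reduce everything to a known toric picture via Gale duality (the Castravet--Tevelev/Mukai correspondence). The points $q_1,\ldots,q_{r+2}\in\PP^r$ in general position are projectively equivalent to the $r+1$ coordinate points together with $(1:\cdots:1)$. Under the Gale/association correspondence, $Y_r=X^r_{r+2}$ corresponds to a configuration of $r+2$ points on $\PP^1$. In fact $Y_r$ is the Kapranov-type model $\overline{M}_{0,\avec}$ for a suitable weight, i.e. $\overline{M}_{0,r+3}$ contracted appropriately. So I would first identify $\Cox(Y_r)$ explicitly, in one of two ways.
\begin{enumerate}
\item Following Castravet--Tevelev, $\Cox(Y_r)$ is generated by the exceptional sections $E_i$ together with the sections of the strict transforms of the linear spans of subsets of the points (hyperplanes through $r$ of the points, and more generally the cones over rational normal curves).
\item Equivalently, $\Cox(Y_r)$ is the ring of $\mathbb{G}_a$-invariants (a Nagata-type invariant ring), which is finitely generated.
\end{enumerate}
Mori dreamness is already given by Theorem \ref{thm:AM-logfano}, since $k=r+2\le r+3$ is in the log Fano range.

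Next I would determine $\Eff(Y_r)$. The listed inequalities are dual to effective generators:
\begin{itemize}
\item the divisors $E_i$;
\item the strict transforms $H-\sum_{j\in J}E_j$ with $\sharp J=r$;
\item in general, the classes of secant varieties of the unique rational normal curve through the $r+3$ points. Here we add a point in order to get the degenerate ones, but for $r+2$ points the relevant extremal ones are just hyperplanes through $r$ points and the $E_i$.
\end{itemize}
I would check that every extremal ray of the cone cut out by $y+x_i\ge0$ and $ry+\sum x_i\ge0$ is one of these effective divisors, which gives one inclusion. For the reverse inclusion, I would intersect with the moving curves: lines through $q_i$, and rational normal curves through all the points. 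These give exactly the stated inequalities, because such curves cover $Y_r$ and so pair nonnegatively with effective divisors.

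For the chamber decomposition, I would use the GIT description: $Y_r$ together with all its models arises as VGIT of $(\PP^1)^{r+3}$ by $\PGL_2$, as in Section \ref{Sec2}. I would write the dictionary $D\mapsto$ weight vector $\bb$ explicitly, namely the inverse of $D_{\bb,n}$ from Theorem \ref{introthm:additive}. The GIT walls $\sum_{i\in I}b_i=\bar b$ then translate into the two families of hyperplanes:
\begin{itemize}
\item the family with $n=r+3\in I$ gives $(2-s)y-\sum_{I}x_i=0$;
\item the family with $n\notin I$ gives the other, using $\sharp I$ and $\sum x_i$.
\end{itemize}
The parameter range $2\le s\le (r+3)/2$ reflects the symmetry $I\leftrightarrow I^c$. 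I would also verify, using the Hu--Keel statement that GIT chambers of the Cox quotient are the Mori chambers, that the VGIT for $\PGL_2$ realizes that quotient. This holds because the Cox ring of $Y_r$ equals the Plücker/torus-invariant ring of $(\PP^1)^{r+3}$ (Gelfand--MacPherson).

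The movable cone is then cut out by adding $x_i\le0$, which prevents $E_i$ from being fixed, and the $\sharp I=2$ inequalities, which prevent the strict transforms of hyperplanes through $r$ points from being fixed. Smoothness and the flip description come from the local VGIT analysis. Crossing a wall $\sum_I b_i=\bar b$ with $\sharp I=s-1$ (or $s$) replaces the locus where the points in $I$ collide, which is $\PP^{s-2}$ (configurations of the $s-1$ remaining points up to the stabilizer $\mathbb{G}_m$), by the locus where the complementary points collide, which is $\PP^{r+1-s}$. On the blow-up side this is the flip of the strict transform of the linear span of $s-1$ points. The stabilizers are $\mathbb{G}_m$ with weights $\pm1$, so Thaddeus's theorem gives smooth models and standard flips.

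The main obstacle is the bookkeeping for the exact dictionary between divisor classes and weights. That dictionary is what produces the precise linear forms in both families of walls and in the movable cone inequalities, so I would check it carefully on $r=2$ (a del Pezzo of degree $5$) before doing the general case.
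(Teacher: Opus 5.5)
\textbf{There is a genuine gap, in the effective-cone step.} Your overall route is a real reconstruction of what the paper only cites from Bolognesi--Massarenti: $\Cox(Y_r)$ as the Plücker algebra of $G(2,r+3)$, then Hu--Keel, then Gelfand--MacPherson to pass to $(\PP^1)^{r+3}/\!/\PGL_2$, then Thaddeus for the flips. The wall bookkeeping also works. With $b_i=y+x_i$ for $i\le r+2$, $b_{r+3}=ry+\sum_i x_i$ and $\bar b=(r+1)y+\sum_i x_i$:
\begin{itemize}
\item the first family of hyperplanes is $b_{I\cup\{r+3\}}=\bar b$, and the second is $b_I=\bar b$;
\item the movable-cone inequalities become $b_i\ge 0$ and $b_i+b_j\le \bar b$.
\end{itemize}
Under the same dictionary, however, the inequalities $y+x_i\ge 0$ and $ry+\sum_i x_i\ge 0$ are exactly $b_i\ge 0$ for $i=1,\ldots,r+3$. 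This is a simplicial cone. Its extremal rays $b=e_{r+3}$ and $b=e_m$ are the classes $-H+\sum_i E_i$ and $H+E_m-\sum_{j\ne m}E_j$, and neither is effective. The first is negative on a general line. The second is negative on the covering family of degree-$r$ rational normal curves through the $q_j$ with $j\ne m$. So your planned check that every extremal ray of the cone cut out by these inequalities is effective cannot succeed. The inequalities in the statement cut out a cone strictly larger than $\Eff(Y_r)$, so this part of the statement cannot be proved as written. Your moving-curve argument only gives the inclusion of $\Eff(Y_r)$ into that cone.

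\textbf{What is true, and how to repair the step.} Your generators $E_i$ and $H-\sum_{j\in J}E_j$ with $\sharp J=r$ are correct. In weight coordinates they are the vectors $e_i+e_j$, which are the degrees of the Plücker coordinates. They span the cone over the hypersimplex $\Delta(2,r+3)$, whose $2(r+3)$ facets are $b_i\ge 0$ and $b_i\le \bar b$. In divisor coordinates you must therefore add $y\ge 0$ and $ry+\sum_{j\ne m}x_j\ge 0$ for $m=1,\ldots,r+2$. The dual covering curves are general lines and degree-$r$ rational normal curves through all points but one; both families are missing from your list of test curves. With these added, both inclusions go through. The movable-cone description is unaffected, since $b_i+b_j\le \bar b$ together with $b_j\ge 0$ already forces $b_i\le\bar b$.

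\textbf{Two smaller slips.}
\begin{itemize}
\item Kapranov's model realizes $Y_r$ as a quotient of configurations of $r+3$ points on $\PP^1$, not $r+2$.
\item Consider a wall of the first family. The flipped $\PP^{s-2}$ is the strict transform of the span $\langle q_i\mid i\in I\rangle$, with $\sharp I=s-1$. Along it the complementary labels $\{1,\ldots,r+2\}\setminus I$ collide, while the points indexed by $I$ and the label $r+3$ move. Your parametrization by $s-1$ points modulo $\mathbb{G}_m$ describes exactly this locus, but you say it is the locus where the points of $I$ collide, which has the colliding set reversed.
\end{itemize}
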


\begin{proof}
This is the description recalled in \cite[Definition 2.31 and Theorem 2.32]{BolognesiMassarenti2021}, based on the finite generation results of \cite{CastravetTevelev2006,AraujoMassarenti2016}.
\end{proof}

\subsection*{The case $k=r+3$}

Let 
$$Z_r:=X^r_{r+3}=\Bl_{q_1,\ldots,q_{r+3}}\PP^r.$$ 

We write a real divisor class on $Z_r$ as $D=yH+\sum_{i=1}^{r+3}x_iE_i$. The effective cone and its Mori chamber decomposition are described by a radial projection to the demi-hypercube.

First we recall the extremal rays of $\Eff(Z_r)$. Let $I\subset \{1,\ldots,r+3\}$ and assume that $\sharp I^c=2a+1$ is odd. Set
$$
E_I:=aH-a\sum_{i\in I}E_i-(a-1)\sum_{i\in I^c}E_i.
$$
For $a=0$, this yields the exceptional divisor $E_i$, with $I^c=\{i\}$. For $a\geq 1$, the class $E_I$ is the strict transform of the cone with vertex $\left\langle(q_i\mid i\in I)\right\rangle$ over the $(a-1)$-secant variety of the rational normal curve obtained by projecting the unique rational normal curve through $q_1,\ldots,q_{r+3}$.

\begin{thm}\label{thm:Eff-rplus3}
The variety $Z_r$ is a Mori dream space. Moreover, $\Eff(Z_r)$ is generated by the classes $E_I$, where $I\subset \{1,\ldots,r+3\}$ and $\sharp I^c$ is odd.
\end{thm}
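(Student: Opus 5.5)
The Mori dream space assertion comes from the earlier results. Theorem \ref{thm:AM-logfano} says that $Z_r=X^r_{r+3}$ is log Fano in every dimension $r\geq 2$. By \cite[Corollary 1.3.2]{BCHM2010} it is therefore a Mori dream space; alternatively, one can cite the finite generation of $\Cox(Z_r)$ from \cite{CastravetTevelev2006}. In particular $\Eff(Z_r)$ is a rational polyhedral cone. Since $\Cox(Z_r)$ is finitely generated, its extremal rays are spanned by classes of prime divisors that appear among the degrees of a minimal homogeneous generating set. The task is then to show that these rays are exactly the $E_I$ with $\sharp I^c$ odd.

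\textbf{Step 1: every $E_I$ is effective and spans an extremal ray.} For $a=0$ the class $E_I$ is an exceptional divisor $E_i$. For $a\geq 1$ I would use the geometric description already recalled in the paper. Project from $\langle q_i\mid i\in I\rangle$: this sends the rational normal curve through the $r+3$ points to a rational normal curve of degree $2a$ in $\PP^{2a}$. The $(a-1)$-secant variety of that curve is a hypersurface of degree $a$, and it has multiplicity $a-1$ at each point of the curve. Taking the cone over it gives a hypersurface of degree $a$ with multiplicity $a$ along the vertex and multiplicity $a-1$ at the remaining $q_j$. Its strict transform therefore has class $E_I$, so $E_I$ is effective. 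For extremality, I would exhibit a covering family of curves $C$ on the divisor with $E_I\cdot C<0$. A natural choice is the strict transforms of secant $a$-planes, or of lines through the vertex meeting the secant variety. The negative intersection forces the divisor to be rigid and its class to be extremal.

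\textbf{Step 2: these classes generate the effective cone.} I would use the Cremona/Weyl symmetry. The standard Cremona transformation centred at $r+1$ of the points acts on $\Pic(Z_r)$ as a reflection and preserves $\Eff(Z_r)$. In the radial projection coordinates it acts on the demi-hypercube by reflections. The set $\{E_I : \sharp I^c \text{ odd}\}$ is a single orbit, or a few orbits, under the group generated by permutations and Cremona transformations; indeed this is Mukai's identification of the rays with the vertices of the demi-hypercube. Suppose $D$ is an effective prime divisor not among the $E_I$. Applying Cremona transformations, I would reduce its degree until reaching a class with $y$ minimal. I would then argue, using $-K_{Z_r}$-degree considerations, that a movable or extremal class must be nonnegative on the dual curves, namely lines through two points and rational normal curves through $r+3$ points. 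Dually, it is enough to show that the cone $\Cone(E_I)$ has facets given by inequalities nonnegative on classes of covering curve families. This is a direct duality computation. Since every effective divisor intersects a covering family of curves nonnegatively, $\Eff\subset\Cone(E_I)$ follows.

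\textbf{Main obstacle.} Step 2 is the hard part: one has to identify all facets of $\Cone(E_I)$ and realize each as a moving curve class. Most of them come from Weyl-orbits of lines through $2$ points, and others from rational normal curves of degree $r$ through $r+3$ points. If this bookkeeping becomes unwieldy, I would instead cite \cite{CastravetTevelev2006}, where the generators of $\Cox(Z_r)$ are shown to be sections in the degrees $E_I$ via the Gelfand--MacPherson/Nagata description.
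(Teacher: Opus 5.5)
Your Mori dream space paragraph (log Fano by Theorem \ref{thm:AM-logfano}, then \cite{BCHM2010}) is exactly the paper's. For the effective cone the paper gives no argument of its own: it cites \cite[Paragraph 3.3]{AraujoMassarenti2016}, which rests on \cite{CastravetTevelev2006,Mukai2005}. So your fallback of citing Castravet--Tevelev \emph{is} the paper's proof. The self-contained route you sketch is genuinely different and can be made to work, but several concrete claims in it are false.

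\textbf{Step 1 numerics.} Projecting from $\langle q_i\mid i\in I\rangle\cong\PP^{r+1-2a}$ lands in $\PP^{2a-2}$, not $\PP^{2a}$. The rational normal curve goes to a curve of degree $r-\sharp I=2a-2$. It is in $\PP^{2a-2}$ that the $(a-1)$-secant variety is a degree-$a$ hypersurface (an $a\times a$ Hankel determinant) with multiplicity $a-1$ along the curve; in $\PP^{2a}$ it would have codimension $3$.

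\textbf{Extremality curves.} The curves you propose are not negative on $E_I$. A line through a vertex point $q_i$, $i\in I$, has class $\ell-e_i$ and $E_I\cdot(\ell-e_i)=a-a=0$. A line through a general vertex point gives $E_I\cdot\ell=a>0$. Extremality is not needed for ``generated by''; if you want it, transport the extremality of $E_i$ by Weyl symmetry.

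\textbf{Step 2, the main gap.} The curves you name as dual are rigid: there is one line through $q_i,q_j$ and one rational normal curve through all $r+3$ points. Effective divisors can be negative on them. For $i,j\in I$ one has $E_I\cdot(\ell-e_i-e_j)=-a$, and $E_I\cdot(r\ell-\sum_j e_j)=1-a$. So the inequality you intend to use is false, and these classes cannot cut out $\Cone(E_I)$.

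\textbf{The fix.} Read the dual cone off the facets of $\Delta$. The facets $\alpha_i\geq 0$, $\alpha_i\leq 1$, and $H_J(\alpha)\geq 1$ with $\sharp J=2k$ even correspond to $D\cdot C\geq 0$ for
$C=\ell-e_i$, $C=r\ell-\sum_{j\neq i}e_j$, and $C=(kr-k+1)\ell-k\sum_{j\notin J}e_j-(k-1)\sum_{j\in J}e_j$ (the case $k=0$ gives $\ell$).
These classes form the $W(D_{r+3})$-orbits of $\ell$ and $\ell-e_1$. For instance, the Cremona centred at $q_1,\ldots,q_{r+1}$ sends $\ell$ to $r\ell-\sum_{i\leq r+1}e_i$ and acts on $\Delta$ as the reflection $(\alpha_{r+2},\alpha_{r+3})\mapsto(1-\alpha_{r+3},1-\alpha_{r+2})$. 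The classes $\ell$ and $\ell-e_1$ move in covering families, and $\Eff$ is Weyl-invariant for general points. Hence all these classes are nonnegative on $\Eff(Z_r)$, which gives $\Eff(Z_r)\subseteq\Cone(E_I)$. The Cremona degree-reduction and $-K$-degree steps are then unnecessary.

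\textbf{Comparison.} Corrected this way, your argument is self-contained apart from the standard Weyl-invariance of $\Eff$, and it shows directly why the projectivized effective cone is the demi-hypercube. The cited results give more, namely the generators of $\Cox(Z_r)$, at the cost of being a black box.
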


\begin{proof}
This is the description recalled in \cite[Paragraph 3.3]{AraujoMassarenti2016}, using \cite{CastravetTevelev2006,Mukai2005}.
\end{proof}

Set
$$
\delta(D)=(r+1)y+\sum_{i=1}^{r+3}x_i.
$$
On the projectivized effective cone, define
$$
\varphi_i(D)=\frac{y+x_i}{\delta(D)},\qquad i=1,\ldots,r+3,
$$
and write $\varphi=(\varphi_1,\ldots,\varphi_{r+3})$. Thus $\varphi$ is constant on rays of $\Eff(Z_r)$. If $\xi_J\in\{0,1\}^{r+3}$ is the vertex whose $i$-th coordinate is $1$ exactly for $i\in J$, then
$$
\varphi(E_I)=\xi_{I^c}.
$$
Hence $\varphi$ identifies the projectivized effective cone with the demi-hypercube
$$
\Delta=\operatorname{Conv}\left(\xi_J\mid \sharp J \text{ is odd}\right)\subset [0,1]^{r+3}.
$$
For $I\subset \{1,\ldots,r+3\}$ and $\alpha=(\alpha_1,\ldots,\alpha_{r+3})$, set
$$
H_I(\alpha)=\sum_{j\notin I}\alpha_j+\sum_{i\in I}(1-\alpha_i).
$$
Then $\Delta$ is cut out by the inequalities $0\leq \alpha_i\leq 1$ and $H_I(\alpha)\geq 1$ for all even subsets $I$.

We now describe the Mori chamber decomposition explicitly. Let $\mathscr A$ be the hyperplane arrangement in $\RR^{r+3}$ given by
$$
H_I(\alpha)=s,\qquad
2\leq s\leq \frac{r+3}{2},\qquad
\sharp I\not\equiv s \pmod 2.
$$
The arrangement $\mathscr A$, together with the boundary faces of $\Delta$, induces a polyhedral subdivision of $\Delta$. Since $\varphi$ is radial, this subdivision determines a fan on $\Eff(Z_r)$: for every cell $C$ of the subdivision of $\Delta$, the corresponding cone is
$$
\sigma_C=\overline{\left\{D\in \Eff(Z_r)\mid \delta(D)>0,\ \varphi(D)\in C\right\}}.
$$
The cones $\sigma_C$ are precisely the cones of the Mori chamber decomposition of $\Eff(Z_r)$.

\begin{thm}\label{thm:MCD-rplus3}
The Mori chamber decomposition of $\Eff(Z_r)$ is the fan obtained as the cone over the subdivision of $\Delta$ induced by $\mathscr A$. In particular, the maximal Mori chambers are the cones $\sigma_C$, where $C$ runs through the maximal cells of
$$
\Delta\setminus \bigcup_{I,s}\{H_I(\alpha)=s\}.
$$
The movable cone is the cone over the polytope
$$
\Pi=\varphi(\Mov(Z_r))=
\left\{\alpha\in [0,1]^{r+3}\mid H_I(\alpha)\geq 2
\text{ for all odd } I\right\}.
$$
Thus the movable chambers are exactly the cones over the maximal cells of the induced subdivision of $\Pi$.

All small $\QQ$-factorial modifications of $Z_r$ are smooth. If two adjacent chambers inside $\Mov(Z_r)$ are separated by a wall $H_I(\alpha)=s$, with $3\leq s\leq (r+3)/2$ and $\sharp I\not\equiv s\pmod 2$, then the corresponding birational map flips a $\PP^{s-2}$ into a $\PP^{r+1-s}$.

The boundary walls of $\Mov(Z_r)$ are of two types. The first type is $\alpha_i=0$ or $\alpha_i=1$; the corresponding contraction is a $\PP^1$-bundle. The second type is $H_I(\alpha)=2$, with $\sharp I$ odd; the corresponding contraction is the blow-up of a smooth point, and the exceptional divisor is the image of $E_{I^c}$.
\end{thm}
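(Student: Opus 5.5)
This statement is a recollection of known results, so I will not derive the decomposition from first principles. Instead I will assemble it from the literature and from the radial description already set up before the statement. First, Theorem \ref{thm:AM-logfano} shows that $Z_r=X^r_{r+3}$ is a Mori dream space, and Theorem \ref{thm:Eff-rplus3} gives the generators $E_I$ of $\Eff(Z_r)$. I will check that the map $\varphi$ identifies the projectivized effective cone with $\Delta$. The linear form $\delta$ is positive on every $E_I$: we have $\delta(E_I)=(r+1)a-a\sharp I-(a-1)(2a+1)$, which is positive once one uses $\sharp I=r+2-2a$. A direct computation then gives $\varphi(E_I)=\xi_{I^c}$. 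Since $\varphi$ is the projectivization of a linear isomorphism onto the affine hyperplane $\delta=1$, it carries cones to polytopes, and the generators go to the vertices $\xi_J$ with $\sharp J$ odd. This yields $\Delta$ together with its facet inequalities $H_I\geq 1$ for $I$ even.

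Second, I will transport the Mori chamber decomposition. By the Cox ring / VGIT description recalled in Section \ref{Sec1}, the chambers are cones, and a radial map sends cones to cells. So it suffices to know the chamber walls in the coordinates $\alpha$. For these I will invoke Mukai's description \cite{Mukai2005} of the models of $Z_r$ as moduli spaces of rank two parabolic bundles, with the walls given by the Bauer arrangement $H_I(\alpha)=s$ and the parity condition $\sharp I\not\equiv s$. I will also use \cite[Paragraph 3.3]{AraujoMassarenti2016}, where this arrangement is written in exactly the above form. The key check is that the change of variables $\alpha_i=\varphi_i(D)$ matches parabolic weights with divisor classes, and this again reduces to evaluating on the generators $E_I$.

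Third, I will treat the movable cone and the wall types. The movable cone is the intersection of the complements of the cones spanned by each $E_I$ together with its adjacent chambers. In $\alpha$-coordinates, removing the region near the vertex $\xi_{I^c}$ for $\sharp I^c$ odd gives exactly the conditions $H_I\geq 2$ for odd $I$, which define $\Pi$. For the walls $\alpha_i\in\{0,1\}$, the contraction is the forgetful or $\PP^1$-bundle map: on the bundle side, one weight degenerates. For the walls $H_I=2$, the model contracts $E_{I^c}$ to a smooth point. The flips across $H_I=s$ replace $\PP^{s-2}$ by $\PP^{r+1-s}$. I will identify the flipped loci as the strict transforms of the linear spans of $s-1$ points in the Cremona/Gale-dual picture, or equivalently as loci of bundles destabilized by a subbundle of prescribed degree, following \cite{Mukai2005,CastravetTevelev2006}. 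Smoothness of all small models follows because each model is a moduli space of stable parabolic bundles with generic weights.

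The main obstacle is matching conventions. The parity conditions and the range $2\leq s\leq (r+3)/2$ must agree with Bauer's stability inequalities once these are pulled back through $\varphi$. I would first verify this on the small cases $r=1,2$, namely the del Pezzo surface of degree $4$, where the chambers are known explicitly, and then appeal to the general citation.
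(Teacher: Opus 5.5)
Your proposal is correct and follows the paper's approach: the paper proves this statement purely by citing \cite[Propositions 2 and 3]{Mukai2005} and \cite[Theorem 3.4]{AraujoMassarenti2016}, and your computation $\delta(E_I)=1$, $\varphi(E_I)=\xi_{I^c}$ is the radial identification the paper records just before the statement. One small labelling slip: the non-movable region cut off by $H_I\geq 2$ (with $\sharp I$ odd) is the one around the vertex $\xi_I=\varphi(E_{I^c})$, not around $\xi_{I^c}$. This matches the statement's claim that the wall $H_I=2$ contracts the image of $E_{I^c}$.
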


\begin{proof}
The chamber decomposition and the wall-crossing description are those of \cite[Propositions 2 and 3]{Mukai2005}; and also in \cite[Theorem 3.4]{AraujoMassarenti2016}.
\end{proof}

\section{Configurations of points, trivial parabolic bundles, quivers and additive polygons}\label{Sec2}

The aim of this section is to identify several standard incarnations of the same moduli problem. Starting from weighted configurations of points on $\PP^1$, we pass through Hassett--GIT models, fixed-trivial-bundle parabolic structures, quiver moduli, and additive polygon spaces. We also keep track of the corresponding birational model of the blow-up $Y_{n-3}=\Bl_{n-1}\PP^{n-3}$, determined by its Mori chamber decomposition.

\subsection*{Configurations of points and the fixed trivial bundle}

Let $\bb=(b_1,\ldots,b_n)\in \QQ_{>0}^n$. After multiplying all $b_i$ by the same positive integer, we may assume that $b_i\in \ZZ_{>0}$ for all $i$. We set $|\bb|=b_1+\cdots+b_n$ and consider the linearized line bundle
$$
L_{\bb}:=\OO_{\PP^1}(b_1)\boxtimes\cdots\boxtimes \OO_{\PP^1}(b_n)
$$
on $(\PP^1)^n$, with respect to the diagonal action of $\PGL_2$. We will denote the corresponding GIT quotient by
$$
(\PP^1)^n_{\bb}:=(\PP^1)^n\quot_{\bb}\PGL_2.
$$
Thus $(\PP^1)^n_{\bb}$ is the good quotient of the semistable locus $((\PP^1)^n)^{ss}(\bb)$.

The Hilbert--Mumford criterion says that a configuration $x=(x_1,\ldots,x_n)$ is $\bb$-semistable if and only if, for every point $q\in\PP^1$, one has
$$
\sum_{x_i=q}b_i\leq \frac{|\bb|}{2}.
$$
It is $\bb$-stable if and only if all these inequalities are strict. This is the standard stability criterion for weighted configurations of points on $\PP^1$. We now normalize the weights by setting
$$
a_i=\frac{2b_i}{|\bb|}.
$$
Then $0<a_i$ and $\sum_i a_i=2$. We write $\avec=(a_1,\ldots,a_n)$. The vector $\avec$ lies on the boundary of the usual Hassett domain. Following Hassett's limiting construction and the standard GIT interpretation, we will denote by $\cM_{0,\avec}^{\mathrm{GIT}}$ the corresponding Hassett--GIT model. Concretely, $\cM_{0,\avec}^{\mathrm{GIT}}$ is the compactification of $M_{0,n}$ in which markings indexed by a subset $I\subset [n]$ are allowed to collide precisely when
$$
a_I:=\sum_{i\in I}a_i\leq 1.
$$
The equality $a_i=2b_i/|\bb|$ shows that this condition is the same as $b_I:=\sum_{i\in I}b_i\leq \frac{|\bb|}{2}$. We refer to \cite{Hassett2003} for details on moduli of curves with weighted marked points.  

\begin{Proposition}\label{prop:git-hassett-fixed}
There is a natural identification $(\PP^1)^n_{\bb}\cong \cM_{0,\avec}^{\mathrm{GIT}}$.
\end{Proposition}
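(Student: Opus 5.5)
The plan is to argue that $\cM_{0,\avec}^{\mathrm{GIT}}$ is, by its very definition in the boundary case $\sum_i a_i=2$, Hassett's limiting GIT quotient. The substance is therefore the check that the collision rule defining it matches the Hilbert--Mumford criterion recalled above. Concretely, I will construct a morphism from the semistable locus to the Hassett--GIT model and show that it descends to the quotient as an isomorphism.

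The first step is the combinatorial matching. A subset $I\subset[n]$ of markings can collide in $\cM_{0,\avec}^{\mathrm{GIT}}$ exactly when $a_I\le 1$. Since $a_I=2b_I/|\bb|$, this is the condition $b_I\le |\bb|/2$, which is exactly the semistability inequality for the configuration in which the points $x_i$, $i\in I$, coincide. Strict inequality corresponds to stability. So the admissible configurations on the two sides agree pointwise: a configuration $x\in(\PP^1)^n$ is $\bb$-semistable if and only if it is an allowed $\avec$-configuration.

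The second step realizes $\cM_{0,\avec}^{\mathrm{GIT}}$ as a quotient. I will use Hassett's construction \cite{Hassett2003}: for weights $\avec_\epsilon=(1-\epsilon)\avec$ with $\epsilon>0$ small, $\sum a_{\epsilon,i}<2$ is in the Hassett domain. There is a reduction morphism $\cM_{0,\avec_\epsilon}\to (\PP^1)^n\quot_{\bb}\PGL_2$, and $\cM_{0,\avec}^{\mathrm{GIT}}$ is by definition its image, i.e.\ $\operatorname{Proj}$ of the ring of sections of the descended GIT polarization.

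For this, take $\mathcal O(\bb)$ on the universal family and push forward the invariant sections. On the open part $M_{0,n}$ both spaces contain the same geometric quotient $((\PP^1)^n)^{s}/\PGL_2\supset M_{0,n}$, so the identification is the identity there. Closed points then match as follows:
\begin{itemize}
\item stable points correspond to stable points, since the stable locus is the geometric quotient on both sides;
\item strictly semistable points, which occur only when some $b_I=|\bb|/2$, correspond to S-equivalence classes, which collapse to the unique polystable configuration with two clusters.
\end{itemize}
This is precisely the identification in the limiting Hassett model.

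Finally, to conclude that the bijective birational morphism is an isomorphism, I use that the GIT quotient is normal, being a quotient of the smooth variety $(\PP^1)^n$, and apply Zariski's Main Theorem.

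I expect the main obstacle to be making the definition of $\cM_{0,\avec}^{\mathrm{GIT}}$ precise enough at the boundary $\sum a_i=2$, where Hassett's moduli functor does not apply directly. The cleanest route, which I would try first, is to adopt Hassett's result \cite[Section 8]{Hassett2003} that the $\epsilon\to0$ limit of $\cM_{0,\avec_\epsilon}$ maps onto the GIT quotient with linearization proportional to $\avec$. The proposition then reduces to the observation that $L_{\bb}$ and $\avec$ define proportional linearizations, so they give the same quotient.
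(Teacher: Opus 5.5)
Your first step, matching the collision rule $a_I\le 1$ with the Hilbert--Mumford inequality $b_I\le |\bb|/2$ and noting that both sides contain $M_{0,n}$, is exactly the paper's proof, and the paper stops there. It takes $\cM_{0,\avec}^{\mathrm{GIT}}$ to be the compactification of $M_{0,n}$ governed by this collision rule, so the identification is essentially definitional. Your second step, which tries to turn this into a genuine isomorphism via Hassett's comparison morphism, is a worthwhile upgrade, but it contains a concrete error. The weights $\avec_\epsilon=(1-\epsilon)\avec$ have total $2-2\epsilon<2$, and such weights are \emph{not} in the Hassett domain. In genus zero Hassett's moduli problem requires $0<a_i\le 1$ and $\sum_i a_i>2$; equivalently, $\omega_C(\sum_i a_ip_i)$ must have positive degree on a smooth rational curve. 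So the space $\cM_{0,\avec_\epsilon}$ you use does not exist. The comparison in \cite[Section 8]{Hassett2003} approaches the boundary $\sum_i a_i=2$ from \emph{above}, e.g.\ with weights $a_i+\epsilon$. It is for those weights that one has a birational morphism $\rho$ from the Hassett space to $(\PP^1)^n\quot_{\avec}SL_2$. With this correction, the route in your last paragraph is sound.

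Once $\rho$ is available, the closing argument via Zariski's Main Theorem is unnecessary, and as set up it is aimed at the wrong space. If $\cM_{0,\avec}^{\mathrm{GIT}}$ is defined as the image of $\rho$, that image is closed because $\rho$ is proper and dense because $\rho$ is birational, so it is all of $(\PP^1)^n_{\bb}$. If instead it is defined as $\operatorname{Proj}$ of the section ring of the pull-back of the descended polarization, then $\rho_*\OO=\OO$ (proper and birational onto a normal variety), and the two section rings coincide. In either case, all that remains is your final observation that $\avec$ and $\bb$ are proportional. You also need that $SL_2$ and $\PGL_2$ give the same quotient, since the center $\{\pm 1\}$ acts trivially. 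By contrast, your opening paragraph announces a morphism from the GIT quotient to the Hassett--GIT model. For that direction, Zariski's Main Theorem would need normality of the target $\cM_{0,\avec}^{\mathrm{GIT}}$, not of the GIT quotient, and you have not established it. Likewise, ``take $\OO(\bb)$ on the universal family and push forward the invariant sections'' does not define a map as it stands; it is better simply to invoke Hassett's morphism.
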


\begin{proof}
A smooth point of $M_{0,n}$ is an ordered configuration of $n$ distinct points on an abstract rational curve, modulo automorphisms of the curve. After choosing an isomorphism of the curve with $\PP^1$, this is the same as an ordered configuration in $(\PP^1)^n$, modulo the diagonal action of $\PGL_2$.

The compactifications are controlled by the same collision rule. Indeed, by the Hilbert--Mumford criterion, a subset $I$ of markings is allowed to collide in the GIT quotient precisely when $b_I\leq |\bb|/2$. After normalization this is exactly the Hassett--GIT condition $a_I\leq 1$. Hence the semistable replacement rule on the GIT side and the collision rule on the Hassett--GIT side agree.
\end{proof}

We now define the fixed-trivial-bundle parabolic moduli problem. Fix pairwise distinct points $p_1,\ldots,p_n\in\PP^1$, and let $E_0=\OO_{\PP^1}^{\oplus 2}$. A quasi-parabolic structure on $E_0$ consists of a choice of lines
$$
V_i\subset (E_0)_{p_i},\qquad i=1,\ldots,n,
$$
where $(E_0)_{p_i}$ denotes the fiber of $E_0$ over $p_i$. After fixing a trivialization of $E_0$, each projective fiber $\PP((E_0)_{p_i})$ is identified with $\PP^1$. Therefore the space of quasi-parabolic structures on $E_0$ is $(\PP^1)^n$.

The automorphism group of $E_0$ is $\GL_2$. Scalar automorphisms act trivially on the projectivized fibers, hence the effective automorphism group is $\PGL_2$.

We now define the fixed-trivial-bundle moduli problem. Let $E_0=\OO_{\PP^1}^{\oplus 2}$. A parabolic structure on $E_0$ at the points $p_1,\ldots,p_n$ is a choice of one-dimensional subspaces
$$
V_i\subset (E_0)_{p_i},\qquad i=1,\ldots,n.
$$
Thus the parameter space of parabolic structures on $E_0$ is
$$
\Par(E_0):=\prod_{i=1}^n \PP((E_0)_{p_i}).
$$
After choosing a trivialization $E_0\cong \OO_{\PP^1}\otimes \CC^2$, we identify each projective fiber $\PP((E_0)_{p_i})$ with $\PP^1$. Hence
$$
\Par(E_0)\cong (\PP^1)^n.
$$
The automorphism group of $E_0$ is $\GL_2$. Scalar automorphisms act trivially on the projective fibers, so the effective group acting on $\Par(E_0)$ is $\GL_2/\CC^*=\PGL_2$. Under the identification $\Par(E_0)\cong(\PP^1)^n$, this is the diagonal action of $\PGL_2$ on $(\PP^1)^n$. We will use the weight vector $\bb=(b_1,\ldots,b_n)$ to linearize this action by
$$
L_{\bb}:=\OO_{\PP^1}(b_1)\boxtimes\cdots\boxtimes\OO_{\PP^1}(b_n).
$$
A parabolic structure $(V_1,\ldots,V_n)$ on $E_0$ is called $\bb$-semistable if the corresponding point of $(\PP^1)^n$ is semistable for this linearization.

More concretely, a degree zero line subbundle of $E_0$ is the same as a constant line $\ell\subset \CC^2$, that is
$$
L=\OO_{\PP^1}\otimes \ell\subset \OO_{\PP^1}\otimes \CC^2.
$$
For such a line subbundle, the condition $V_i=L_{p_i}$ means that the $i$-th parabolic direction is equal to the point $[\ell]\in\PP^1$. Therefore the Hilbert--Mumford criterion says that $(V_1,\ldots,V_n)$ is $\bb$-semistable if and only if, for every degree zero line subbundle $L\subset E_0$, one has
$$
\sum_{V_i=L_{p_i}} b_i\leq \frac{|\bb|}{2}.
$$
It is $\bb$-stable if all these inequalities are strict. We define
$$
\Mtriv_{\bb}:=\Par(E_0)^{ss}(\bb)\quot \PGL_2.
$$
Using the trivialization of $E_0$, this becomes
$$
\Mtriv_{\bb}\cong ((\PP^1)^n)^{ss}(\bb)\quot \PGL_2=(\PP^1)^n_{\bb}.
$$
This identification is independent of the chosen trivialization, since changing the trivialization only composes the identification $\Par(E_0)\cong(\PP^1)^n$ with the diagonal action of an element of $\PGL_2$.

\begin{Proposition}\label{prop:fixed-trivial-git}
There is a natural identification $\Mtriv_{\bb}\cong (\PP^1)^n_{\bb}$.
\end{Proposition}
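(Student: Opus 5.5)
The identification comes from trivializing $E_0$ and then checking that the whole GIT setup (parameter space, group action, linearization, stability) is carried over unchanged. Since $\Mtriv_{\bb}$ was defined as $\Par(E_0)^{ss}(\bb)\quot\PGL_2$, the task is to produce a $\PGL_2$-equivariant isomorphism $\Par(E_0)\cong(\PP^1)^n$ that preserves the linearization, and then to show that the result does not depend on the choice.

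First I fix a trivialization $\tau\colon E_0\xrightarrow{\sim}\OO_{\PP^1}\otimes\CC^2$. This gives an isomorphism $\Phi_\tau\colon\Par(E_0)\to(\PP^1)^n$ sending $(V_1,\ldots,V_n)$ to $(\tau(V_1),\ldots,\tau(V_n))$, where each $\tau(V_i)$ is viewed as a line in $\CC^2$. Next I check equivariance. Conjugation by $\tau$ identifies $\Aut(E_0)=H^0(\PP^1,\mathcal{E}nd(E_0))$ with $\GL_2$, because $H^0(\OO_{\PP^1})=\CC$. Under this identification an automorphism acts on $\Par(E_0)$ fiberwise by the same constant matrix at every $p_i$. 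So $\Phi_\tau$ intertwines the induced $\PGL_2$-action with the diagonal action on $(\PP^1)^n$.

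The linearization is transported by the same isomorphism. On $\Par(E_0)$, $L_{\bb}$ is $\boxtimes_i\OO_{\PP((E_0)_{p_i})}(b_i)$, and $\Phi_\tau$ pulls back $\OO_{\PP^1}(b_i)$ on the $i$-th factor to exactly this bundle. It also matches the $\GL_2$-linearizations: scalars act on both sides with the same character, and passing to $\PGL_2$ (or to $SL_2$) is done in the same way on both sides. Hence $\Phi_\tau$ identifies the rings of invariant sections of the powers of $L_{\bb}$, which means $\Phi_\tau$ maps the semistable locus onto the semistable locus and descends to an isomorphism of good quotients $\Mtriv_{\bb}\cong(\PP^1)^n_{\bb}$. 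As a consistency check, the subbundle criterion stated above becomes the Hilbert--Mumford criterion on $(\PP^1)^n$, because degree zero line subbundles of $E_0$ correspond to constant lines $\ell\subset\CC^2$.

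Finally, naturality. Two trivializations differ by a constant element $g\in\GL_2$, so $\Phi_{\tau'}=\bar g\circ\Phi_\tau$ with $\bar g\in\PGL_2$ acting diagonally. Since $\bar g$ induces the identity on the quotient $(\PP^1)^n_{\bb}$, the descended isomorphism does not depend on $\tau$. The only point that needs real care is the linearization-compatibility step, in particular pinning down the $\GL_2$- versus $\PGL_2$-linearization normalization. Everything else amounts to unwinding the definitions.
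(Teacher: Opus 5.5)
Your proposal is correct and follows the paper's route: trivialize $E_0$, note that $\Aut(E_0)/\CC^*$ becomes the diagonal $\PGL_2$-action and that $L_{\bb}$ is carried to itself, so the two GIT quotients coincide. Independence of the trivialization then holds because two trivializations differ by an element of $\PGL_2$, which induces the identity on the quotient. Your additional details, namely $\Aut(E_0)\cong\GL_2$ via $H^0(\OO_{\PP^1})=\CC$ and the matching of invariant section rings, only spell out what the paper asserts in one line.
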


\begin{proof}
By definition, $\Mtriv_{\bb}$ is the GIT quotient of the semistable locus in the parameter space of parabolic structures on $E_0$. The choice of a trivialization $E_0\cong \OO_{\PP^1}\otimes\CC^2$ identifies this parameter space with $(\PP^1)^n$. Under this identification, the action of $\Aut(E_0)/\CC^*$ becomes the diagonal action of $\PGL_2$, and the linearization is exactly $L_{\bb}$. Hence the quotient is $(\PP^1)^n_{\bb}$. Changing the trivialization changes the identification with $(\PP^1)^n$ by an element of $\PGL_2$. Therefore the resulting quotient is canonically independent of this choice.
\end{proof}

\subsection*{Additive polygons}

Let $S^2_{b_i}\subset \RR^3$ be the sphere of radius $b_i$. We define the additive polygon space by
$$
\Padd_{\bb}:=
\left\{(u_1,\ldots,u_n)\in S^2_{b_1}\times\cdots\times S^2_{b_n}
\mid u_1+\cdots+u_n=0\right\}/SO(3).
$$
Thus a point of $\Padd_{\bb}$ is a closed oriented polygonal chain in $\RR^3$ with side lengths $b_1,\ldots,b_n$, modulo rotations. The vectors $u_i$ are the side vectors of the polygon, and the equation $u_1+\cdots+u_n=0$ is the closing condition.

\begin{Definition}\label{def:generic-length}
A length vector $\bb=(b_1,\ldots,b_n)\in \RR_{>0}^n$ is called generic if $b_I\neq \frac{|\bb|}{2}$ for every non-empty proper subset $I\subset [n]$, where $b_I=\sum_{i\in I}b_i$. Equivalently, no subset of sides has total length equal to the total length of its complement.
\end{Definition}

If $\bb$ is generic in the sense of Definition \ref{def:generic-length}, then $0$ is a regular value of the moment map
$$
S^2_{b_1}\times\cdots\times S^2_{b_n}\longrightarrow \RR^3,
\qquad
(u_1,\ldots,u_n)\longmapsto u_1+\cdots+u_n.
$$
Therefore the closing condition $u_1+\cdots+u_n=0$ imposes three independent real equations, and since $SO(3)$ is $3$-dimensional we have that $\Padd_{\bb}$ has real dimension $2n-3-3=2(n-3)$.

\begin{Example}[A unit additive pentagon]\label{ex:unit-additive-pentagon}
Consider the length vector $\bb=(1,1,1,1,1)$. The following picture represents five unit vectors $u_1,\ldots,u_5\in S^2\subset\RR^3$ satisfying $u_1+\cdots+u_5=0$. Thus they define a point of the additive polygon space $\Padd_{\bb}$. The gray arrows are the vectors $u_i$ from the origin to the unit sphere, while the black segments join their endpoints and give a geometric visualization of the corresponding pentagonal configuration on $S^2$:

\begin{center}
\begin{tikzpicture}[scale=1.7, line cap=round, line join=round]



\coordinate (O)  at (0,0);
\coordinate (U1) at (1,0);
\coordinate (U2) at (-0.1103,0.1732);
\coordinate (U3) at (-0.8897,-0.1732);
\coordinate (U4) at (0,1);
\coordinate (U5) at (0,-1);

\draw[gray!35, thick] (O) circle (1);
\draw[gray!25, dashed] (-1,0) arc (180:360:1 and 0.28);
\draw[gray!35] (1,0) arc (0:180:1 and 0.28);
\draw[gray!25] (0,-1) arc (-90:90:0.28 and 1);
\draw[gray!25, dashed] (0,1) arc (90:270:0.28 and 1);

\draw[->, gray!45, thick] (O) -- (U1);
\draw[->, gray!45, thick] (O) -- (U2);
\draw[->, gray!45, thick] (O) -- (U3);
\draw[->, gray!45, thick] (O) -- (U4);
\draw[->, gray!45, thick] (O) -- (U5);

\node[gray!60] at (0.60,-0.08) {\tiny{$u_1$}};
\node[gray!60] at (-0.17,0.03) {\tiny{$u_2$}};
\node[gray!60] at (-0.47,-0.19) {\tiny{$u_3$}};
\node[gray!60] at (0.13,0.62) {\tiny{$u_4$}};
\node[gray!60] at (0.11,-0.62) {\tiny{$u_5$}};

\draw[black, thick] (U1) -- (U2) -- (U4) -- (U3) -- (U5) -- (U1);

\fill[black] (U1) circle (0.9pt);
\fill[black] (U2) circle (0.9pt);
\fill[black] (U3) circle (0.9pt);
\fill[black] (U4) circle (0.9pt);
\fill[black] (U5) circle (0.9pt);

\end{tikzpicture}
\end{center}
Under the identification $S^2\cong \PP^1$, each vector $u_i$ determines a point of $\PP^1$. Thus the polygon determines a configuration $([u_1],\ldots,[u_5])\in(\PP^1)^5$. Here $[u_i]$ is the point of $\PP^1$ corresponding to the endpoint of the vector $u_i$ on the unit sphere. Note that these are not the vertices of the polygonal chain obtained by placing the side vectors consecutively; they are the endpoints of the side vectors viewed as points of the coadjoint orbit $S^2$.
\end{Example}

\begin{Proposition}\label{prop:additive-git}
There is a natural homeomorphism $\Padd_{\bb}\cong (\PP^1)^n_{\bb}$. On the stable locus this homeomorphism is an isomorphism of real analytic orbifolds.
\end{Proposition}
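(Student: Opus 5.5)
The plan is to realize both sides as quotients of the same Kähler manifold and then invoke the Kempf--Ness theorem, in its Kirwan form for projective varieties. Consider $X=(\PP^1)^n$ with the product Kähler form $\omega_{\bb}=\sum_i b_i\,\mathrm{pr}_i^*\omega_{FS}$, where $\omega_{FS}$ is normalized so that $(\PP^1,\omega_{FS})$ is the unit sphere $S^2\subset\RR^3$. Equivalently, $(\PP^1,b_i\omega_{FS})$ is the coadjoint orbit $S^2_{b_i}\subset\mathfrak{su}(2)^*\cong\RR^3$. The cohomology class of $\omega_{\bb}$ is $2\pi c_1(L_{\bb})$, up to the fixed normalization. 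The maximal compact subgroup of $\PGL_2$ is $SU(2)/\{\pm1\}\cong SO(3)$, and it acts on each factor by rotations. Under this identification the moment map of the diagonal $SO(3)$-action is
$$
\mu(u_1,\ldots,u_n)=u_1+\cdots+u_n,
$$
because the moment map of a coadjoint orbit is its inclusion and moment maps add on products. I will check that the moment map attached to the linearization $L_{\bb}$, computed via the Hermitian metric on $\OO(b_i)$, coincides with this $\mu$ up to a positive constant, so that the zero level is unchanged. Then $\mu^{-1}(0)/SO(3)$ is exactly $\Padd_{\bb}$.

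Next I apply the Kempf--Ness/Kirwan theorem. It says that $\mu^{-1}(0)\subset X^{ss}(\bb)$, that every closed $\PGL_2$-orbit in $X^{ss}(\bb)$ meets $\mu^{-1}(0)$ in exactly one $SO(3)$-orbit, and that the inclusion therefore induces a homeomorphism $\mu^{-1}(0)/SO(3)\to X^{ss}(\bb)\quot\PGL_2=(\PP^1)^n_{\bb}$. As a consistency check, I will verify directly that the points of $\mu^{-1}(0)$ satisfy the Hilbert--Mumford inequalities recalled before Proposition \ref{prop:git-hassett-fixed}. If $\sum_{x_i=q}b_i>|\bb|/2$, then projecting $\sum u_i$ onto the axis through $q$ gives a strictly positive value, so the sum cannot vanish. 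I also want to identify polystable configurations with the closed-orbit points: these are the stable configurations together with the degenerate configurations on two points, where each point carries weight exactly $|\bb|/2$. The latter correspond to collinear polygons.

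For the real analytic statement, I restrict to the stable locus. There the $\PGL_2$-stabilizers are finite; for $n\ge 3$ stable points have trivial stabilizer except in degenerate cases, and in any case the stabilizers are finite. By the standard argument, $0$ is then a regular value of $\mu$ along $\mu^{-1}(0)\cap X^{s}$: the differential $d\mu$ is surjective exactly where the infinitesimal stabilizer is trivial. Hence $\mu^{-1}(0)^{s}/SO(3)$ is a real analytic orbifold. The map $\mu^{-1}(0)^{s}\to X^{s}/\PGL_2$ is real analytic and $SO(3)$-invariant, and it is a bijection on quotients. It is a local diffeomorphism because of the transversality $T_x(\PGL_2\cdot x)=T_x(SO(3)\cdot x)\oplus J\,T_x(SO(3)\cdot x)$ together with $\ker d\mu_x=(J\,\mathfrak{so}(3)\cdot x)^{\perp_{\omega}}$. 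This shows that the map is an isomorphism of real analytic orbifolds.

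I expect the main obstacle to be the normalization and bookkeeping. First, the moment map induced by the Hermitian metric on $L_{\bb}$ must be exactly $\sum u_i$ with $|u_i|=b_i$, which requires the correct Fubini--Study scaling for each factor $\OO(b_i)$. Second, at strictly semistable non-generic $\bb$ one must pass carefully from orbits to S-equivalence classes. For the second point, I will rely on Kirwan's general theorem that $\mu^{-1}(0)/K\cong X\quot G$ holds as topological spaces for any linearization, and not only generic ones.
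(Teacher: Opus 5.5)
Your proposal is correct and follows essentially the paper's route: identify each $(\PP^1,\OO_{\PP^1}(b_i))$ with the coadjoint orbit $S^2_{b_i}$, note that the moment map of the diagonal action is $u_1+\cdots+u_n$, and apply Kempf--Ness/Kirwan, with finiteness of stabilizers on the stable locus giving the orbifold statement. The paper works with $SU(2)\subset SL_2(\CC)$ and then discards the center $\{\pm I\}$; you use $SO(3)\subset\PGL_2$ directly, which only needs $L_{\bb}$ replaced by $L_{\bb}^{\otimes 2}$ when $|\bb|$ is odd. One small slip in your local-diffeomorphism step: $\ker d\mu_x$ is the $\omega$-orthogonal of $\mathfrak{so}(3)\cdot x$, equivalently the metric orthogonal of $J\,\mathfrak{so}(3)\cdot x$, not $(J\,\mathfrak{so}(3)\cdot x)^{\perp_\omega}$; with this correction your transversality argument goes through.
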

\begin{proof}
The Lie algebra $\mathfrak{su}(2)$ consists of the traceless skew-Hermitian $2\times 2$ complex matrices: 
$$
\mathfrak{su}(2)
=
\left\{
A\in M_2(\CC)
\mid
A^*=-A,\ \operatorname{tr}(A)=0
\right\},
$$
where $A^*=\overline{A}^{\,t}$ denotes the conjugate transpose. Every element of $\mathfrak{su}(2)$ has the form
$$
A=
\begin{pmatrix}
i a & z \\
-\overline{z} & -i a
\end{pmatrix},
\qquad
a\in\RR,\ z\in\CC.
$$

Its dual $\mathfrak{su}(2)^*$ is the real vector space of real linear functionals on $\mathfrak{su}(2)$. We identify $\mathfrak{su}(2)^*$ with $\mathfrak{su}(2)$, and hence with $\RR^3$, by means of the $\operatorname{Ad}$-invariant inner product
$$
\langle A,B\rangle=-\frac{1}{2}\operatorname{tr}(AB).
$$
Here $\operatorname{Ad}_g(A)=gAg^{-1}$, and the invariance means that $\langle \operatorname{Ad}_g(A),\operatorname{Ad}_g(B)\rangle
=
\langle A,B\rangle$ for every $g\in SU(2)$ and every $A,B\in\mathfrak{su}(2)$.

The coadjoint action is the dual action on $\mathfrak{su}(2)^*$. Thus, if $\xi\in\mathfrak{su}(2)^*$, then
$$
(g\cdot \xi)(A)=\xi(\operatorname{Ad}_{g^{-1}}(A)),
\qquad
A\in\mathfrak{su}(2).
$$
More explicitly, an element $X\in\mathfrak{su}(2)$ defines a linear functional $\xi_X\in\mathfrak{su}(2)^*$ by $
\xi_X(A)=\langle X,A\rangle$. The $\operatorname{Ad}$-invariance of the inner product means that $
\langle \operatorname{Ad}_g(X),\operatorname{Ad}_g(A)\rangle
=
\langle X,A\rangle$ for every $g\in SU(2)$ and every $X,A\in\mathfrak{su}(2)$. Therefore
$$
(g\cdot \xi_X)(A)
=
\xi_X(\operatorname{Ad}_{g^{-1}}(A))
=
\langle X,\operatorname{Ad}_{g^{-1}}(A)\rangle
=
\langle \operatorname{Ad}_g(X),A\rangle
=
\xi_{\operatorname{Ad}_g(X)}(A).
$$
Hence, under the identification $\mathfrak{su}(2)^*\cong\mathfrak{su}(2)$, the coadjoint action coincides with the adjoint action. Now choose the basis
\begin{equation}\label{basis}
E_1=
\begin{pmatrix}
0 & 1\\
-1 & 0
\end{pmatrix},
\qquad
E_2=
\begin{pmatrix}
0 & i\\
i & 0
\end{pmatrix},
\qquad
E_3=
\begin{pmatrix}
i & 0\\
0 & -i
\end{pmatrix}.
\end{equation}
It identifies $\mathfrak{su}(2)$ with $\RR^3$ by $
x_1E_1+x_2E_2+x_3E_3
\longleftrightarrow
(x_1,x_2,x_3)$. The adjoint action preserves the inner product, so it gives a homomorphism
$$
SU(2)\longrightarrow SO(3),
\qquad
g\longmapsto \operatorname{Ad}_g.
$$
The kernel is the center ${\pm I}$, and the image is all of $SO(3)$. Thus $
SU(2)/{\pm I}\cong SO(3)$. Consequently, after the identifications $\mathfrak{su}(2)^*\cong\mathfrak{su}(2)\cong\RR^3$, the coadjoint action of $SU(2)$ becomes the usual rotation action of $SO(3)$ on $\RR^3$. In particular, if $\xi\in\mathfrak{su}(2)^*$ has norm $b$, then its coadjoint orbit is the sphere $SU(2)\cdot \xi=S^2_b\subset\RR^3$. This is the standard identification between the coadjoint representation of $SU(2)$ and the rotation representation of $SO(3)$ \cite[Chapter 1]{Kirillov2004} and \cite[Chapter 2]{GuilleminSternberg1990}.

Fix an element $\xi_i\in\mathfrak{su}(2)^*$ of norm $b_i$. Its coadjoint orbit is $O_{b_i}:=SU(2)\cdot \xi_i\subset \mathfrak{su}(2)^*$. Since the coadjoint action is the rotation action under the above identification, this orbit is the sphere $O_{b_i}=S^2_{b_i}\subset \RR^3$ of radius $b_i$. Thus the spheres appearing in the definition of $\Padd_{\bb}$ are precisely coadjoint orbits of $SU(2)$.

Every coadjoint orbit carries a canonical symplectic form, the Kostant--Kirillov--Souriau form. For $A\in\mathfrak{su}(2)$, the infinitesimal adjoint action is
$$
\operatorname{ad}_A(B)=[A,B],
\qquad
B\in\mathfrak{su}(2).
$$
The infinitesimal coadjoint action is the dual map
$$
\operatorname{ad}^*_A:\mathfrak{su}(2)^*\longrightarrow \mathfrak{su}(2)^*
$$
defined by
$$
(\operatorname{ad}^*_A\xi)(B)
=
-\xi([A,B]),
\qquad
\xi\in\mathfrak{su}(2)^*,\ B\in\mathfrak{su}(2).
$$
Thus the tangent space to the coadjoint orbit $\mathcal O_{b_i}=SU(2)\cdot \xi$ at $\xi$ is
$$
T_\xi\mathcal O_{b_i}
=
\left\{\operatorname{ad}^*_A\xi\mid A\in\mathfrak{su}(2)\right\}.
$$
At a point $\xi\in O_{b_i}$, tangent vectors to the orbit are of the form
$\operatorname{ad}^*_A(\xi)$, with $A\in\mathfrak{su}(2)$. With our convention for the coadjoint action, the Kostant--Kirillov--Souriau form is
$$
\omega_\xi(\operatorname{ad}^*_A\xi,\operatorname{ad}^*_B\xi)
=
\xi([A,B]).
$$
This is the standard symplectic form on coadjoint orbits; see \cite[Chapter 1]{Kirillov2004} and \cite[Chapter 3]{GuilleminSternberg1990}.

We now compare the Kostant--Kirillov--Souriau form with the Fubini--Study form. We write the coadjoint orbit as $
O_{b_i}:=SU(2)\cdot \xi_i$. Under the identification $\mathfrak{su}(2)^*\cong \RR^3$, this is the sphere $S^2_{b_i}$ of radius $b_i$.

The orbit $O_{b_i}$ is also naturally identified with $\PP^1$. Indeed, the stabilizer in $SU(2)$ of a non-zero element $\xi_i\in\mathfrak{su}(2)^*$ is a maximal torus $U(1)\subset SU(2)$, and hence $O_{b_i}\cong SU(2)/U(1)\cong \PP^1$. The second identification is the usual one between the homogeneous space $SU(2)/U(1)$ and the complex projective line.

Let $\omega_{\mathrm{KKS},i}$ be the Kostant--Kirillov--Souriau form on $O_{b_i}$. We normalize the Fubini--Study form $\omega_{\mathrm{FS}}$ on $\PP^1$ by requiring
$$
[\omega_{\mathrm{FS}}]=c_1(\OO_{\PP^1}(1)).
$$
With this normalization, the KKS form on the orbit through an element of norm $b_i$ satisfies $
[\omega_{\mathrm{KKS},i}]
=
b_i[\omega_{\mathrm{FS}}]
=
c_1(\OO_{\PP^1}(b_i))$. We now compute the cohomology class of the Kostant--Kirillov--Souriau form on $O_{b_i}$ explicitly. With respect to the inner product $\langle A,B\rangle$ the basis \eqref{basis} is orthonormal. Moreover,
$$
[E_1,E_2]=2E_3,\qquad
[E_2,E_3]=2E_1,\qquad
[E_3,E_1]=2E_2.
$$
Let $X_i=b_iE_3$. Under the identification $\mathfrak{su}(2)^*\cong\mathfrak{su}(2)\cong\RR^3$, the coadjoint orbit through $X_i$ is $
O_{b_i}=SU(2)\cdot X_i=S^2_{b_i}$. At the point $X_i=b_iE_3$, the tangent space to the orbit is generated by the vectors
$$
v_1=\operatorname{ad}^*_{E_1}(X_i)=[E_1,X_i]=-2b_iE_2 \text{ and } v_2=\operatorname{ad}^*_{E_2}(X_i)=[E_2,X_i]=2b_iE_1.
$$
With our convention, the Kostant--Kirillov--Souriau form is $
\omega_{\mathrm{KKS},X_i}
\left(
\operatorname{ad}^*_A(X_i),
\operatorname{ad}^*_B(X_i)
\right)
=
\langle X_i,[A,B]\rangle$. Therefore
$$
\omega_{\mathrm{KKS},X_i}(v_1,v_2) =
\langle b_iE_3,[E_1,E_2]\rangle =
\langle b_iE_3,2E_3\rangle = 2b_i.
$$
Let $\sigma_{b_i}$ be the Euclidean area form on the sphere $S^2_{b_i}$, with the orientation induced by the outward normal. Since $v_1=-2b_iE_2$ and $v_2=2b_iE_1$, we have $
\sigma_{b_i,X_i}(v_1,v_2)=4b_i^2$. Hence, at $X_i$, we have $\omega_{\mathrm{KKS},i}
=\frac{1}{2b_i}\sigma_{b_i}$. Both forms are $SU(2)$-invariant, so the equality holds on the whole orbit $O_{b_i}=S^2_{b_i}$. It follows that
$$
\int_{O_{b_i}}\omega_{\mathrm{KKS},i} =
\frac{1}{2b_i}\operatorname{Area}(S^2_{b_i}) =
\frac{1}{2b_i}\cdot 4\pi b_i^2 =
2\pi b_i.
$$
Now identify $O_{b_i}$ with $\PP^1$. We normalize the Fubini--Study form $\omega_{\mathrm{FS}}$ by $
\int_{\PP^1}\omega_{\mathrm{FS}}=2\pi$. Thus $\frac{[\omega_{\mathrm{FS}}]}{2\pi}=c_1(\OO_{\PP^1}(1))$. Since $H^2(\PP^1,\RR)$ is one-dimensional, the integral computation above gives $
[\omega_{\mathrm{KKS},i}]
=
b_i[\omega_{\mathrm{FS}}]$. Hence,
$$
\frac{[\omega_{\mathrm{KKS},i}]}{2\pi}
=
b_i c_1(\OO_{\PP^1}(1))
=
c_1(\OO_{\PP^1}(b_i)).
$$
This is the precise meaning of saying that $O_{b_i}\cong\PP^1$ is the Hamiltonian $SU(2)$-manifold associated with the linearized line bundle $\OO_{\PP^1}(b_i)$. The group $SU(2)$ acts on $\PP^1$ in the standard way, the line bundle $\OO_{\PP^1}(b_i)$ carries the induced $SU(2)$-linearization, and an invariant Hermitian metric on $\OO_{\PP^1}(b_i)$ has curvature form representing $
c_1(\OO_{\PP^1}(b_i))$. By the computation above, this is the same cohomology class as the KKS form on $O_{b_i}$. Moreover, the corresponding moment map is, up to the fixed normalization, the inclusion $
O_{b_i}\hookrightarrow \mathfrak{su}(2)^*$. Thus the Hamiltonian data coming from the coadjoint orbit $O_{b_i}$ agree with the Hamiltonian data coming from the polarized projective variety $(\PP^1,\OO_{\PP^1}(b_i))$.

Now consider the product
$$
M:=S^2_{b_1}\times\cdots\times S^2_{b_n}
=
O_{b_1}\times\cdots\times O_{b_n}.
$$
We equip $M$ with the product symplectic form $
\omega:=\omega_1+\cdots+\omega_n$, where $\omega_i$ is the Kostant--Kirillov--Souriau form on the $i$-th coadjoint orbit. The diagonal action of $SU(2)$ on $M$ is Hamiltonian. Its moment map is the sum of the orbit inclusions:
$$
\mu:M\longrightarrow \mathfrak{su}(2)^*\cong\RR^3,\qquad
\mu(u_1,\ldots,u_n)=u_1+\cdots+u_n.
$$
This is the standard moment map for a product of coadjoint orbits \cite[Section 2]{Kirwan1984}. The symplectic quotient of $M$ at zero is
$$
\mu^{-1}(0)/SU(2)
=
\left\{(u_1,\ldots,u_n)\in S^2_{b_1}\times\cdots\times S^2_{b_n}
\mid u_1+\cdots+u_n=0\right\}/SU(2).
$$
The center ${\pm I}\subset SU(2)$ acts trivially on every coadjoint orbit, hence the quotient by $SU(2)$ is the same as the quotient by $SO(3)$. Therefore
$$
\mu^{-1}(0)/SU(2)=\Padd_{\bb}.
$$
This is the usual realization of Euclidean polygon spaces as symplectic reductions of products of spheres \cite{KapovichMillson1996,HausmannKnutson1998}.

On the other hand, via the identifications $S^2_{b_i}\cong\PP^1$, the Hamiltonian $SU(2)$-manifold $M$ is the compact Kähler manifold $(\PP^1)^n$ equipped with the product Kähler form whose cohomology class is the first Chern class of
$$
L_{\bb}:=\OO_{\PP^1}(b_1)\boxtimes\cdots\boxtimes\OO_{\PP^1}(b_n).
$$
The complexification of $SU(2)$ is $SL_2(\CC)$. We now apply the Kempf--Ness theorem \cite{KempfNess1979}, \cite[Chapter 8]{MumfordFogartyKirwan1994}, \cite[Section 2]{Kirwan1984}. In the present setting, this theorem says that if a smooth projective variety is acted on by a compact group $K$, with complexification $G=K_{\CC}$, and the Kähler form comes from a compatible ample linearization, then the symplectic quotient $\mu^{-1}(0)/K$ is naturally homeomorphic to the GIT quotient by $G$. Applied to $K=SU(2)$, $G=SL_2(\CC)$, and the linearized variety $((\PP^1)^n,L_{\bb})$, it gives a natural homeomorphism
$$
\mu^{-1}(0)/SU(2)
\cong
(\PP^1)^n\quot_{\bb}SL_2(\CC).
$$
Finally, the center ${\pm I}\subset SL_2(\CC)$ acts trivially on $\PP^1$, and hence trivially on $(\PP^1)^n$. Therefore the invariant sections and the semistable loci for the $SL_2(\CC)$-linearization and for the induced $PGL_2$-linearization agree. Hence the GIT quotient by $SL_2(\CC)$ is the same as the GIT quotient by $PGL_2=SL_2(\CC)/{\pm I}$ that is
$$
(\PP^1)^n\quot_{\bb}SL_2(\CC)
=
(\PP^1)^n\quot_{\bb}\PGL_2
=
(\PP^1)^n_{\bb}.
$$
Therefore $\Padd_{\bb}$ and $(\PP^1)^n_{\bb}$ are homeomorphic. On the stable locus, the Kempf--Ness homeomorphism identifies the symplectic quotient with the stable GIT quotient in the real analytic category. Since stabilizers are finite on the stable locus, the identification is an isomorphism of real analytic orbifolds \cite[Section 2]{Kirwan1984}, \cite[Chapter 8]{MumfordFogartyKirwan1994}.
\end{proof}

\begin{Remark}
Assume that $\bb$ is generic and that $\Padd_{\bb}$ is non-empty. Then $0$ is a regular value of the moment map and by Marsden--Weinstein reduction \cite{MarsdenWeinstein1974}, \cite[Chapter 5]{McDuffSalamon2017}, $\Padd_{\bb}$ is a smooth compact symplectic manifold. Moreover, each sphere $S^2_{b_i}$ is identified with $\PP^1$ endowed with a multiple of the Fubini--Study form, and the diagonal $SU(2)$-action is Hamiltonian and holomorphic. Thus Kähler reduction endows $\Padd_{\bb}$ with a natural Kähler structure \cite{GuilleminSternberg1982}, \cite[Section 2]{Kirwan1984}. Hence $\Padd_{\bb}$ has a structure of Kähler manifold of dimension $n-3$.  The algebraic structure is obtained by comparing this Kähler quotient with the GIT quotient. By Kempf--Ness, $
\Padd_{\bb}\cong (\PP^1)^n\quot_{\bb}\PGL_2$. Thus, through this identification, $\Padd_{\bb}$ becomes a complex projective variety. In the generic case this projective variety is smooth; in the non-generic case one obtains instead a singular projective GIT quotient, corresponding on the symplectic side to a singular stratified symplectic quotient.
\end{Remark}

\subsection*{The quiver viewpoint}

We now describe the quiver interpretation of the fixed-trivial-bundle side. Let $Q$ be the star-shaped quiver with one central vertex $0$, outer vertices $1,\ldots,n$, and one arrow $i\longrightarrow 0$ for each $i=1,\ldots,n$. We take the dimension vector
$$
\mathbf v=(2,1,\ldots,1),
$$
that is, the vector space at the central vertex is $\CC^2$, and the vector space at each outer vertex is $\CC$.

Thus a representation of $Q$ of dimension vector $\mathbf v$ is an $n$-tuple of linear maps
$$
q_i:\CC\longrightarrow \CC^2,\qquad i=1,\ldots,n.
$$
After choosing the standard basis of $\CC$, each $q_i$ is the same as a vector in $\CC^2$. Therefore
$$
\operatorname{Rep}(Q,\mathbf v)
=
\bigoplus_{i=1}^n\operatorname{Hom}(\CC,\CC^2)
\cong
(\CC^2)^n.
$$
The base-change group is $
G_{\mathbf v}:=
\left(GL_2(\CC)\times(\CC^*)^n\right)/\CC^*$, where the diagonal copy of $\CC^*$ acts trivially. Explicitly, an element represented by $(g,t_1,\ldots,t_n)$ acts by $q_i\longmapsto gq_it_i^{-1}$. We will use the character determined by the weight vector $\bb$ that is
$$
\theta_{\bb}=(-|\bb|,2b_1,\ldots,2b_n),
\qquad
|\bb|=b_1+\cdots+b_n.
$$
This satisfies $2(-|\bb|)+\sum_{i=1}^n2b_i=0$. So it is a legitimate stability parameter for the dimension vector $\mathbf v$. The corresponding quiver moduli space is the GIT quotient
$$
\mathcal M_Q(\bb):=
\operatorname{Rep}(Q,\mathbf v)^{ss}(\theta_{\bb})\quot G_{\mathbf v}.
$$
This is the moduli space of $\theta_{\bb}$-semistable representations of the quiver $Q$ with dimension vector $\mathbf v$, in the sense of the standard GIT construction of quiver moduli \cite{King1994,Nakajima1994}.

The relation with configurations of points on $\PP^1$ is explicit. The factors $(\CC^*)^n$ rescale the non-zero vectors $q_i$, so quotienting by them projectivizes each vector:
$$
(q_1,\ldots,q_n)
\longmapsto
([q_1],\ldots,[q_n])\in(\PP^1)^n.
$$
After this quotient, the remaining group is $PGL_2$, acting diagonally on $(\PP^1)^n$. The character $\theta_{\bb}$ induces the product linearization $\OO_{\PP^1}(2b_1)\boxtimes\cdots\boxtimes\OO_{\PP^1}(2b_n)$. Multiplying all weights by $2$ does not change the GIT quotient. Hence $
\mathcal M_Q(\bb)
\cong
(\PP^1)^n\quot_{\bb}PGL_2
=
(\PP^1)^n_{\bb}$. By the additive dictionary, this is also the additive polygon space: $
\mathcal M_Q(\bb)
\cong
(\PP^1)^n_{\bb}
\cong
\Padd_{\bb}$.

The same identification can be seen symplectically. Let
$$
K_{\mathbf v}:=
\left(U(2)\times U(1)^n\right)/U(1)
$$
be the maximal compact subgroup of $G_{\mathbf v}$. The vector space $(\CC^2)^n$ has its standard Kähler form, and the action of $K_{\mathbf v}$ is Hamiltonian. The Kähler quotient of $(\CC^2)^n$ by $K_{\mathbf v}$ with parameter determined by $\bb$ is homeomorphic to the above GIT quotient by the Kempf--Ness theorem \cite{KempfNess1979,Kirwan1984,MumfordFogartyKirwan1994}.

More concretely, the $U(1)^n$-moment-map equations fix the norms of the vectors $q_i$. After quotienting by $U(1)^n$, each non-zero vector $q_i\in\CC^2$ gives a point of $\PP^1$, or, under the Hopf map, a point of the sphere $S^2_{b_i}$. The remaining $SU(2)$-moment map is precisely $
(u_1,\ldots,u_n)\mapsto u_1+\cdots+u_n$.
Thus the final Kähler quotient is
$$
\left\{(u_1,\ldots,u_n)\in S^2_{b_1}\times\cdots\times S^2_{b_n}
\mid u_1+\cdots+u_n=0\right\}/SO(3)
=
\Padd_{\bb}.
$$
Therefore the quiver moduli space $\mathcal M_Q(\bb)$ is a projective GIT quotient, hence a complex projective variety. When the stability parameter is generic, it is smooth and its Kähler quotient description gives its natural Kähler form.

\begin{Example}\label{Add_Q_5}
For $n=5$, the quiver is the following star-shaped quiver:
\begin{center}
\begin{tikzpicture}[scale=0.45, line cap=round, line join=round]
\node[circle,draw,thick,minimum size=4.5mm,inner sep=0pt] (O) at (0,0) {\tiny{$0$}};
\node at (0,-1.1) {\tiny{$\CC^2$}};

\node[circle,draw,minimum size=4mm,inner sep=0pt] (A1) at (0,2) {\tiny{$1$}};
\node[circle,draw,minimum size=4mm,inner sep=0pt] (A2) at (1.9,0.62) {\tiny{$2$}};
\node[circle,draw,minimum size=4mm,inner sep=0pt] (A3) at (1.18,-1.62) {\tiny{$3$}};
\node[circle,draw,minimum size=4mm,inner sep=0pt] (A4) at (-1.18,-1.62) {\tiny{$4$}};
\node[circle,draw,minimum size=4mm,inner sep=0pt] (A5) at (-1.9,0.62) {\tiny{$5$}};

\node at (0,2.67) {\tiny{$\CC$}};
\node at (2.55,0.80) {\tiny{$\CC$}};
\node at (1.75,-1.98) {\tiny{$\CC$}};
\node at (-1.77,-1.98) {\tiny{$\CC$}};
\node at (-2.55,0.80) {\tiny{$\CC$}};

\draw[->,thick] (A1) -- (O);
\draw[->,thick] (A2) -- (O);
\draw[->,thick] (A3) -- (O);
\draw[->,thick] (A4) -- (O);
\draw[->,thick] (A5) -- (O);
\end{tikzpicture}
\end{center}
A representation consists of five vectors $q_1,\ldots,q_5\in\CC^2$. After quotienting by the five scalar actions, these give five points $[q_1],\ldots,[q_5]\in\PP^1$; after quotienting by $PGL_2$, one obtains the same GIT quotient as before.
\end{Example}

We now pass to the doubled quiver. Let $\overline Q$ be obtained from $Q$ by adding the opposite arrow $0\longrightarrow i$ for each $i=1,\ldots,n$. Thus a representation of $\overline Q$ of dimension vector $\mathbf v=(2,1,\ldots,1)$ consists of pairs $q_i:\CC\longrightarrow \CC^2$ and $p_i:\CC^2\longrightarrow \CC$, $i=1,\ldots,n$. Here the maps $q_i$ correspond to the original arrows $i\to 0$, while the maps $p_i$ correspond to the opposite arrows $0\to i$.

Since $\operatorname{Rep}(Q,\mathbf v)$ is a vector space, its cotangent bundle is
$$
T^*\operatorname{Rep}(Q,\mathbf v)=\operatorname{Rep}(Q,\mathbf v)\oplus \operatorname{Rep}(Q,\mathbf v)^*.
$$
In our case $\operatorname{Rep}(Q,\mathbf v)=\bigoplus_{i=1}^n\operatorname{Hom}(\CC,\CC^2)$. Using the trace pairing, $\operatorname{Rep}(Q,\mathbf v)^*=\bigoplus_{i=1}^n\operatorname{Hom}(\CC^2,\CC)$. Therefore
$$
T^*\operatorname{Rep}(Q,\mathbf v)=
\bigoplus_{i=1}^n
\left(
\operatorname{Hom}(\CC,\CC^2)
\oplus
\operatorname{Hom}(\CC^2,\CC)
\right).
$$
This is exactly $\operatorname{Rep}(\overline Q,\mathbf v)=T^*\operatorname{Rep}(Q,\mathbf v)$.

The vector space $T^*\operatorname{Rep}(Q,\mathbf v)$ carries its standard flat hyperkähler structure. The compact group $K_{\mathbf v}:=(U(2)\times U(1)^n)/U(1)$ acts by unitary changes of bases at the vertices. Its Lie algebra is $\mathfrak{k}_{\mathbf v}:=(\mathfrak{u}(2)\oplus \mathfrak{u}(1)^n)/\mathfrak{u}(1)$. The complexification of $K_{\mathbf v}$ is $G_{\mathbf v}:=(GL_2(\CC)\times(\CC^*)^n)/\CC^*$, with Lie algebra $\mathfrak{g}_{\mathbf v}:=(\mathfrak{gl}_2(\CC)\oplus \CC^n)/\CC$.

The action of $K_{\mathbf v}$ on $T^*\operatorname{Rep}(Q,\mathbf v)$ is Hamiltonian for the hyperkähler structure. Hence there are moment maps
$$
\mu_{\RR}:T^*\operatorname{Rep}(Q,\mathbf v)\longrightarrow \mathfrak{k}_{\mathbf v}^*,
\qquad
\mu_{\CC}:T^*\operatorname{Rep}(Q,\mathbf v)\longrightarrow \mathfrak{g}_{\mathbf v}^*.
$$
Here $\mathfrak{k}_{\mathbf v}^*$ is the real dual of the Lie algebra of $K_{\mathbf v}$, while $\mathfrak{g}_{\mathbf v}^*$ is the complex dual of the Lie algebra of $G_{\mathbf v}$.

Let $\lambda_{\bb}\in\mathfrak{k}_{\mathbf v}^*$ be the real moment-map parameter determined by $\bb$. The notation $T^*\operatorname{Rep}(Q,\mathbf v)\hkquot_{\bb}K_{\mathbf v}$ means the hyperkähler quotient at real level $\lambda_{\bb}$ and complex level $0$, that is
$$
T^*\operatorname{Rep}(Q,\mathbf v)\hkquot_{\bb}K_{\mathbf v}
:=
\left(
\mu_{\RR}^{-1}(\lambda_{\bb})
\cap
\mu_{\CC}^{-1}(0)
\right)/K_{\mathbf v}.
$$
We will denote this hyperkähler quotient by $\mathfrak X_{\bb}:=T^*\operatorname{Rep}(Q,\mathbf v)\hkquot_{\bb}K_{\mathbf v}$. This is the hyperpolygon space associated with $\bb$ \cite{Konno2002,GodinhoMandini2013}.

In one of its complex structures, the same hyperkähler quotient is the Nakajima quiver variety
$$
\mathfrak X_{\bb}
=
\mu_{\CC}^{-1}(0)^{ss}(\theta_{\bb})\quot G_{\mathbf v}.
$$
Here $\theta_{\bb}$ is the stability parameter defined above, and the quotient is the GIT quotient by $G_{\mathbf v}$ \cite{Nakajima1994,Konno2002,GodinhoMandini2013}. With respect to this complex structure, $\mathfrak X_{\bb}$ carries the holomorphic symplectic form induced by the canonical holomorphic symplectic form on $T^*\operatorname{Rep}(Q,\mathbf v)$.

The same complex symplectic manifold has a parabolic-Higgs interpretation. More precisely, $\mathfrak X_{\bb}$ is identified with the moduli space of certain rank two parabolic Higgs bundles on $\PP^1$ with fixed marked points and weights determined by $\bb$. Under this identification the variables $p_i$ encode the Higgs-field data. Moreover, the holomorphic symplectic form on the hyperpolygon space agrees with the natural Higgs symplectic form on the parabolic-Higgs moduli space \cite{GodinhoMandini2013,BiswasFlorentinoGodinhoMandini2015}. Thus the identification with parabolic Higgs bundles is a holomorphic symplectomorphism.

Let us write the complex moment-map equations explicitly. A point of $T^*\operatorname{Rep}(Q,\mathbf v)$ is a collection $(q_i,p_i)_{i=1}^n$, with $q_i:\CC\longrightarrow \CC^2$ and $p_i:\CC^2\longrightarrow \CC$. The composition $q_ip_i:\CC^2\longrightarrow \CC^2$ is an endomorphism of $\CC^2$, while $p_iq_i:\CC\longrightarrow \CC$ is a scalar endomorphism. The complex moment-map equation at the central vertex is $\sum_{i=1}^n(q_ip_i)_0=0$, where
$$
(q_ip_i)_0
=
q_ip_i-\frac{1}{2}\operatorname{tr}(q_ip_i)\operatorname{Id}_{\CC^2}
$$
is the traceless part of $q_ip_i$. The equations at the outer vertices are $p_iq_i=0$, $i=1,\ldots,n$. Thus a point of $\mathfrak X_{\bb}$ is represented by pairs $(q_i,p_i)$ satisfying these equations, modulo the action of $G_{\mathbf v}$ and the stability condition $\theta_{\bb}$.

The ordinary polygon space sits inside the hyperpolygon space as the zero-section locus. Indeed, if $p_1=\cdots=p_n=0$, then the complex moment-map equations are automatically satisfied, and we recover the original quiver representation data $(q_1,\ldots,q_n)$. Hence there is a natural inclusion
$$
\Padd_{\bb}
\cong
\mathcal M_Q(\bb)
\subset
\mathfrak X_{\bb}.
$$
Here $\mathcal M_Q(\bb)$ is the quiver moduli space of the original star-shaped quiver. In this sense, the hyperpolygon space is the cotangent, or hyperkähler, enhancement of the additive polygon space.

For generic $\bb$, the hyperpolygon space $\mathfrak X_{\bb}$ is a smooth hyperkähler manifold. In particular, it is Kähler in each complex structure. In the complex-algebraic structure above, it is a smooth quasi-projective variety. It is generally non-compact, and therefore it is not projective in general. The compact projective subvariety obtained by setting all $p_i=0$ is the ordinary additive polygon space.

\begin{Example}
For $n=5$, the doubled quiver is:
\begin{center}
\begin{tikzpicture}[scale=0.45, line cap=round, line join=round]
\node[circle,draw,thick,minimum size=4.5mm,inner sep=0pt] (O) at (0,0) {\tiny{$0$}};
\node at (0,-1.1) {\tiny{$\CC^2$}};

\node[circle,draw,minimum size=4mm,inner sep=0pt] (A1) at (0,2) {\tiny{$1$}};
\node[circle,draw,minimum size=4mm,inner sep=0pt] (A2) at (1.9,0.62) {\tiny{$2$}};
\node[circle,draw,minimum size=4mm,inner sep=0pt] (A3) at (1.18,-1.62) {\tiny{$3$}};
\node[circle,draw,minimum size=4mm,inner sep=0pt] (A4) at (-1.18,-1.62) {\tiny{$4$}};
\node[circle,draw,minimum size=4mm,inner sep=0pt] (A5) at (-1.9,0.62) {\tiny{$5$}};

\node at (0,2.67) {\tiny{$\CC$}};
\node at (2.55,0.80) {\tiny{$\CC$}};
\node at (1.75,-1.98) {\tiny{$\CC$}};
\node at (-1.77,-1.98) {\tiny{$\CC$}};
\node at (-2.55,0.80) {\tiny{$\CC$}};

\draw[->,thick] (A1) to[bend left=10] (O);
\draw[->,thick,gray] (O) to[bend left=10] (A1);

\draw[->,thick] (A2) to[bend left=10] (O);
\draw[->,thick,gray] (O) to[bend left=10] (A2);

\draw[->,thick] (A3) to[bend left=10] (O);
\draw[->,thick,gray] (O) to[bend left=10] (A3);

\draw[->,thick] (A4) to[bend left=10] (O);
\draw[->,thick,gray] (O) to[bend left=10] (A4);

\draw[->,thick] (A5) to[bend left=10] (O);
\draw[->,thick,gray] (O) to[bend left=10] (A5);

\node at (4.35,0.35) {\tiny{$q_i:\CC\to\CC^2$}};
\node[gray] at (4.35,-0.20) {\tiny{$p_i:\CC^2\to\CC$}};
\end{tikzpicture}
\end{center}
\end{Example}

The relation between polygons and hyperpolygons is direct. The ordinary quiver moduli space $\mathcal M_Q(\bb)$ embeds into the hyperpolygon space by setting all cotangent variables equal to zero:
$$
(q_1,\ldots,q_n)\longmapsto (q_1,\ldots,q_n,p_1=0,\ldots,p_n=0).
$$
The complex moment-map equations are then automatically satisfied, and the stability condition reduces to the original one. Under the identifications above, this yields the inclusion
$$
\Padd_{\bb}
\cong
\mathcal M_Q(\bb)
\subset
\mathfrak X_{\bb}.
$$
Thus the hyperpolygon space is a cotangent, or hyperkähler, enhancement of the additive polygon space. In the parabolic-Higgs interpretation, the variables $p_i$ encode the Higgs-field data, and the ordinary polygon space is the zero-Higgs-field locus. With respect to the holomorphic symplectic structure described above, the identification with the corresponding parabolic-Higgs moduli space is a symplectomorphism \cite{GodinhoMandini2013,BiswasFlorentinoGodinhoMandini2015}.

The structural difference is the following. The ordinary quiver moduli space $\mathcal M_Q(\bb)$ is compact and projective; it is the same projective variety as $(\PP^1)^n_{\bb}$. The hyperpolygon space $\mathfrak X_{\bb}$ is hyperkähler, hence Kähler in each of its complex structures, and it is a smooth quasi-projective variety for generic $\bb$. It is generally non-compact and therefore not projective. Its compact subvariety obtained by setting $p_i=0$ is precisely the additive polygon space.

\subsection*{Normalization of additive weights.}
We recall how the weights are normalized in the additive dictionary. Let $\bb=(b_1,\ldots,b_n)\in\QQ_{>0}^n$, set $|\bb|=\sum_i b_i$ and $\bar b=|\bb|/2$. The GIT quotient $(\PP^1)^n_{\bb}=((\PP^1)^n)^{ss}(\bb)\quot\PGL_2$ depends only on the ray $\QQ_{>0}\bb$: by the Hilbert--Mumford criterion, a configuration is semistable precisely when every subset $I$ of points colliding at the same point of $\PP^1$ satisfies $\sum_{i\in I}b_i\leq\bar b$. Thus replacing $\bb$ by $\lambda\bb$, with $\lambda>0$, does not change the semistable locus, but only rescales the polarization.

To compare with Hassett weights, we choose the representative of this ray having total weight $2$, namely $\avec=(2b_1/|\bb|,\ldots,2b_n/|\bb|)$. Then $\sum_i a_i=2$, and the GIT condition becomes $\sum_{i\in I}a_i\leq 1$, which is the collision rule for the Hassett--GIT model $\cM_{0,\avec}^{\mathrm{GIT}}$. Thus $\avec$ is not a new stability datum: it is the normalized representative of the GIT ray of $\bb$.

The fixed-trivial-bundle parabolic quotient, the star-shaped quiver quotient and the additive polygon space are naturally written using any representative of the same ray. More generally, if $A=(\alpha_1,\ldots,\alpha_n)$ has total weight $S=\sum_i\alpha_i$, then taking $b_i=\alpha_i$ gives the stability condition $\sum_{i\in I}\alpha_i\leq S/2$. For parabolic structures on the fixed bundle $\OO_{\PP^1}^{\oplus 2}$, this is the usual slope condition, since the parabolic slope of the whole bundle is $S/2$. For additive polygons, scaling all lengths by the same positive constant does not change the associated GIT quotient; it only rescales the symplectic form under the Kempf--Ness identification.

For the star-shaped quiver with dimension vector $(2,1,\ldots,1)$, the corresponding quiver stability parameter is $\theta_{\bb}=(-|\bb|,2b_1,\ldots,2b_n)$, where the first entry corresponds to the central vertex. It satisfies $\theta_{\bb}\cdot(2,1,\ldots,1)=0$. With the convention that $\theta$-semistability means $\theta(W)\leq 0$ for every subrepresentation $W$, the subrepresentation determined by a line in $\CC^2$ containing the vectors indexed by $I$ gives the condition $-|\bb|+2\sum_{i\in I}b_i\leq 0$, that is $\sum_{i\in I}b_i\leq|\bb|/2$. Hence the quiver stability is exactly the GIT stability above. Moreover, $\theta_{\lambda\bb}=\lambda\theta_{\bb}$, so the quiver moduli space depends only on the same ray. In normalized Hassett coordinates one may use $\theta_{\avec}=(-2,2a_1,\ldots,2a_n)$, or equivalently $\vartheta_{\avec}=(-1,a_1,\ldots,a_n)$. For a non-normalized vector $A=(\alpha_i)$ with total weight $S$, the corresponding additive quiver parameter is $\theta_A=(-S,2\alpha_1,\ldots,2\alpha_n)$, or equivalently $\vartheta_A=(-S/2,\alpha_1,\ldots,\alpha_n)$.

In particular, if one starts from weights $A=(\alpha_i)$ with $S=\sum_i\alpha_i>2$, then the additive GIT quotient, the fixed-trivial-bundle parabolic quotient, the quiver moduli space and the additive polygon space are obtained by taking $\bb=A$. The associated Hassett--GIT weights are instead the radial normalization $\widehat A=(2\alpha_1/S,\ldots,2\alpha_n/S)$. Thus $A$ itself should not be confused with the Hassett weights of the same additive model: Hassett stability for $A$ would use the collision rule $\sum_{i\in I}\alpha_i\leq 1$, whereas the additive GIT, fixed-trivial-bundle, quiver and polygon stability condition is $\sum_{i\in I}\alpha_i\leq S/2$. Conversely, starting from $\bb$, every vector $A=\lambda\bb$ gives the same additive model; if one wants $\sum_i\alpha_i>2$, then $\lambda>2/|\bb|$, and if one also wants $0<\alpha_i<1$ for all $i$, one must impose $\lambda b_i<1$ for all $i$. Finally, the divisor class on the blow-up scales as $D_{\lambda\bb,n}=\lambda D_{\bb,n}$, hence the Mori chamber and the associated Mori model depend only on the ray of $\bb$.

We summarize the additive dictionary in the following statement, making explicit both the moduli-theoretic identifications and the Mori chamber of $Y_{n-3}$ which produces the corresponding birational model.

\begin{thm}\label{thm:additive-summary}
Let $\bb=(b_1,\ldots,b_n)\in \QQ_{>0}^n$, and assume that the semistable locus is non-empty. Set $|\bb|=\sum_i b_i$, $\bar b=|\bb|/2$, and
$$
\avec=\left(\frac{2b_1}{|\bb|},\ldots,\frac{2b_n}{|\bb|}\right).
$$
Fix the distinguished label $n$, and set $r=n-3$. Let
$$
Y_{n-3}:=\Bl_{q_1,\ldots,q_{n-1}}\PP^{n-3}.
$$
On $Y_{n-3}$ consider the $\QQ$-divisor class
$$
D_{\bb,n}:=(\bar b-b_n)H-\sum_{i=1}^{n-1}(\bar b-b_n-b_i)E_i.
$$
Let $C_{\bb,n}$ be the Mori chamber of $\Eff(Y_{n-3})$ containing $D_{\bb,n}$ in its relative interior, or the corresponding face if $D_{\bb,n}$ lies on a wall. Let
$$
\phi_{\bb,n}:Y_{n-3}\dashrightarrow Y_{\bb,n}
$$
be the Mori model associated with $C_{\bb,n}$, that is $Y_{\bb,n}=\operatorname{Proj}R(Y_{n-3},D_{\bb,n})$. Then there are natural identifications
$$
Y_{\bb,n}
\cong
(\PP^1)^n_{\bb}
\cong
\cM_{0,\avec}^{\mathrm{GIT}}
\cong
\Mtriv_{\bb}
\cong
\mathcal M_Q(\bb)
\cong
\Padd_{\bb}.
$$
Here $(\PP^1)^n_{\bb}=((\PP^1)^n)^{ss}(\bb)\quot\PGL_2$ is taken with respect to $L_{\bb}=\OO_{\PP^1}(b_1)\boxtimes\cdots\boxtimes\OO_{\PP^1}(b_n)$, the Hassett--GIT weights are $\avec$, the fixed-trivial-bundle parabolic quotient uses the same linearization $L_{\bb}$, the quiver stability parameter is
$$
\theta_{\bb}=(-|\bb|,2b_1,\ldots,2b_n),
$$
and $\Padd_{\bb}$ is the additive polygon space with side lengths $b_1,\ldots,b_n$. More precisely, the first five spaces are naturally isomorphic as projective varieties. The last identification is the Kempf--Ness identification between the GIT quotient $(\PP^1)^n_{\bb}$ and the symplectic quotient defining $\Padd_{\bb}$. Thus $\Padd_{\bb}$ carries the natural projective structure associated with this GIT/Kempf--Ness realization.
\end{thm}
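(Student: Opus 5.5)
The chain of identifications to the right of $(\PP^1)^n_{\bb}$ has essentially been assembled already in this section. Proposition \ref{prop:git-hassett-fixed} gives $(\PP^1)^n_{\bb}\cong\cM_{0,\avec}^{\mathrm{GIT}}$, and Proposition \ref{prop:fixed-trivial-git} gives $\Mtriv_{\bb}\cong(\PP^1)^n_{\bb}$. The quiver discussion shows $\mathcal M_Q(\bb)\cong(\PP^1)^n_{\bb}$: we quotient first by the outer tori $(\CC^*)^n$, which projectivize the $q_i$, and we use that $\theta_{\bb}$ induces $L_{2\bb}$, which has the same semistable locus and quotient as $L_{\bb}$. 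Proposition \ref{prop:additive-git} gives the Kempf--Ness homeomorphism with $\Padd_{\bb}$. So the plan is to quote these results. The only genuinely new content is $Y_{\bb,n}\cong(\PP^1)^n_{\bb}$, and the proof will focus on it.

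For that step I would use the Gelfand--MacPherson / Kapranov correspondence between $(\PP^1)^n\quot\PGL_2$ and $\PP^{n-3}$ blown up at $n-1$ points. Concretely, we use $\PGL_2$ to fix $x_n=\infty$. The remaining symmetry is the affine group $\mathbb G_a\rtimes\mathbb G_m$ acting on $(\mathbb A^1)^{n-1}$. Quotienting by translations gives $\mathbb A^{n-2}$, and then quotienting by scalings gives $\PP^{n-3}$. In these coordinates, the loci where $x_n$ collides with $n-2$ of the remaining points become the $n-1$ coordinate points $q_1,\ldots,q_{n-1}$, up to a general choice. Under this picture a $\PGL_2$-invariant section of $L_{\bb}$ of multidegree $(b_1,\ldots,b_n)$ becomes a form on $\PP^{n-3}$. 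I would compute its degree and its multiplicity at each $q_i$ by a Hilbert--Mumford type count. The degree should be $\bar b-b_n$, and the multiplicity at $q_i$ should be $\bar b-b_n-b_i$; this is the standard computation giving
$$
H^0\bigl((\PP^1)^n,L_{k\bb}\bigr)^{\PGL_2}\cong H^0\bigl(Y_{n-3},kD_{\bb,n}\bigr)
$$
for all $k$ divisible enough. This is the key step: it identifies the invariant ring $\bigoplus_k H^0(L_{k\bb})^{\PGL_2}$ with the section ring $R(Y_{n-3},D_{\bb,n})$, compatibly with multiplication. Taking Proj of both sides then yields
$$
Y_{\bb,n}=\operatorname{Proj}R(Y_{n-3},D_{\bb,n})\cong(\PP^1)^n_{\bb}.
$$
By Theorem \ref{thm:MCD-rplus2}, $Y_{n-3}$ is a Mori dream space, so the section ring is finitely generated. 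Its Proj is the model attached to the chamber or face $C_{\bb,n}$. When $D_{\bb,n}$ lies on a wall we get the contracted model, which matches the strictly semistable quotient. Since $D_{\lambda\bb,n}=\lambda D_{\bb,n}$, rescaling $\bb$ changes nothing on either side.

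The main obstacle is making the identification of invariant sections with sections of $kD_{\bb,n}$ precise, including the multiplicity computation at the $q_i$. I would try first to derive it from the Gelfand--MacPherson isomorphism. That isomorphism identifies $(\PP^1)^n\quot_{\bb}\PGL_2$ with a torus quotient of $G(2,n)$, and $\Cox(Y_{n-3})$ with the ring of $\mathbb G_a$-invariants in the Castravet--Tevelev description. Under it, the torus character $\bb$ corresponds to the class $D_{\bb,n}$: the coefficient of $H$ is obtained by pairing against the distinguished label $n$, and the coefficient of $E_i$ by pairing against the pair $\{i,n\}$. As a consistency check, I would verify that the walls $\sum_{i\in I}b_i=\bar b$ map to the hyperplanes of Theorem \ref{thm:MCD-rplus2}, and that non-emptiness of the semistable locus corresponds to $D_{\bb,n}\in\Eff(Y_{n-3})$. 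Checking that the effective inequalities $y+x_i\geq 0$ become $b_i\leq\bar b$ confirms the formula for $D_{\bb,n}$.
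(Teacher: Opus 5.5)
Your proposal follows the paper's proof. The right-hand identifications are quoted from Propositions \ref{prop:git-hassett-fixed}, \ref{prop:fixed-trivial-git}, \ref{prop:additive-git} and the quiver discussion, and the key step $Y_{\bb,n}\cong(\PP^1)^n_{\bb}$ is the identification of $\bigoplus_k H^0(L_{k\bb})^{\PGL_2}$ with $R(Y_{n-3},D_{\bb,n})$, which the paper simply cites from \cite{Kumar2003,BolognesiMassarenti2021} instead of re-deriving it as you propose.

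Two side remarks are wrong, though neither affects the argument. First, $q_i$ is the image of the configurations in which the $n-2$ points $x_j$, $j\notin\{i,n\}$, coincide; with $x_n=\infty$ nothing collides with $x_n$. Second, writing $D_{\bb,n}=yH+\sum_i x_iE_i$ one has $y+x_i=b_i$ and $(n-3)y+\sum_j x_j=b_n$, so those inequalities only say $b_i\geq 0$. The condition $b_i\leq\bar b$ instead corresponds to $y\geq 0$ for $i=n$ and to $(n-3)y+\sum_{j\neq i}x_j\geq 0$ for $i<n$, and these are facets of $\Eff(Y_{n-3})$ that Theorem \ref{thm:MCD-rplus2} does not list.
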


\begin{proof}
The model $Y_{\bb,n}$ is the GIT model associated with the divisor $D_{\bb,n}$ by the associated-points construction and the VGIT interpretation of the Mori chamber decomposition of $Y_{n-3}$; \cite{Kumar2003,BolognesiMassarenti2021}. In the notation of Theorem \ref{thm:MCD-rplus2}, the relevant chamber is exactly the chamber $C_{\bb,n}\subset\Eff(Y_{n-3})$ containing $D_{\bb,n}$.

The identification $(\PP^1)^n_{\bb}\cong \cM_{0,\avec}^{\mathrm{GIT}}$ is Proposition \ref{prop:git-hassett-fixed}. The identification $\Mtriv_{\bb}\cong(\PP^1)^n_{\bb}$ is Proposition \ref{prop:fixed-trivial-git}. The identification with $\mathcal M_Q(\bb)$ follows from the quiver construction above, since quotienting by the outer scalar factors projectivizes the vectors $q_i$, and the character $\theta_{\bb}$ induces the product linearization $L_{\bb}$ up to multiplication by a positive scalar. Finally, $\Padd_{\bb}\cong(\PP^1)^n_{\bb}$ is Proposition \ref{prop:additive-git}, via the symplectic reduction of the product of coadjoint orbits and the Kempf--Ness theorem.
\end{proof}

\subsection*{The blow-up chamber and wall-crossing to the Fano model}

We now single out the chamber which gives the original blow-up $Y_{n-3}=\Bl_{q_1,\ldots,q_{n-1}}\PP^{n-3}$. We use normalized weights $A=(a_1,\ldots,a_n)$ with $a_i>0$ and $\sum_i a_i=2$, and we fix the label $n$ distinguished. The chamber corresponding to the nef cone of $Y_{n-3}$ is
$$
\mathcal C_Y=
\left\{
A\mid a_i+a_n<1 \text{ for all } i=1,\ldots,n-1,\quad
a_i+a_j+a_n>1 \text{ for all }1\leq i<j\leq n-1
\right\}.
$$
Thus, if $\bb=(b_1,\ldots,b_n)$ is any positive vector and $a_i=2b_i/|\bb|$, then the additive model associated with $\bb$ is exactly the blow-up $Y_{n-3}$ if and only if
$$
b_i+b_n<\frac{|\bb|}{2}\text{ for all }i=1,\ldots,n-1,\qquad
b_i+b_j+b_n>\frac{|\bb|}{2}\text{ for all }1\leq i<j\leq n-1.
$$
In this chamber the GIT quotient $(\PP^1)^n_{\bb}$, the fixed-trivial-bundle parabolic quotient $\Mtriv_{\bb}$, the quiver moduli space with stability parameter $\theta_{\bb}=(-|\bb|,2b_1,\ldots,2b_n)$, and the additive polygon space $\Padd_{\bb}$ are all identified with $Y_{n-3}$. In modular terms, this chamber singles out the $n$-th point, or equivalently the $n$-th leg of the quiver, or the $n$-th side of the polygon.

The symmetric, or Fano, model is obtained from the symmetric weight
$A_F=(2/n,\ldots,2/n)$, equivalently from the equal length vector $\bb_F=(1,\ldots,1)$. We will denote it by $\Sigma_{n-3}:=(\PP^1)^n\quot \PGL_2$. If $n$ is odd, $A_F$ lies in a chamber; if $n$ is even, it lies on the symmetric wall. In either case, the corresponding GIT model is the Fano model in the additive Mori chamber decomposition.

To pass from $Y_{n-3}$ to $\Sigma_{n-3}$ one chooses a path of weights from $\mathcal C_Y$ to $A_F$. The walls crossed by this path are the hyperplanes $a_I=1$, where $a_I=\sum_{i\in I}a_i$ and $2\leq |I|\leq n-2$. Crossing such a wall is the VGIT wall-crossing for configurations of points on $\PP^1$, hence also the wall-crossing for the fixed-trivial-bundle parabolic quotient, for the quiver quotient, and for the additive polygon space. If $A^-$ and $A^+$ lie in the two adjacent chambers and $a_I<1$ on the $A^-$-side while $a_I>1$ on the $A^+$-side, then the corresponding birational map replaces a projective space $\PP^{|I|-2}$ with a projective space $\PP^{n-|I|-2}$. Reversing the crossing reverses the flip.

The weights themselves are modified simply by moving across the wall while keeping total weight $2$. If $A^0$ lies on the wall $a_I=1$, a convenient local normal direction is
$$
\eta_i=
\begin{cases}
1/|I| & \text{if } i\in I,\\
-1/|I^c| & \text{if } i\notin I.
\end{cases}
$$
Then $A^\pm=A^0\pm\epsilon\eta$, for $0<\epsilon\ll 1$, lie on the two sides of the wall and satisfy $\sum_i a_i^\pm=2$. On the quiver side the stability parameter changes from $\theta_{A^-}=(-2,2a_1^-,\ldots,2a_n^-)$ to $\theta_{A^+}=(-2,2a_1^+,\ldots,2a_n^+)$. On the parabolic side the parabolic weights are changed from $A^-$ to $A^+$. On the polygon side the side lengths are changed from $A^-$ to $A^+$, or from any positive scalar multiple of $A^-$ to the corresponding scalar multiple of $A^+$.

Equivalently, in non-normalized length coordinates, a wall is given by $b_I=|\bb|/2$. If the total length is kept fixed, one may cross the wall by increasing the lengths indexed by $I$ and decreasing the lengths indexed by $I^c$ proportionally, or vice versa. The chamber only depends on the ray of $\bb$, so this operation may be followed by any common positive rescaling of all lengths.

In polygon language, legths vectors lying on a wall are non-generic. In fact, if $\bb$ satisfies the wall equation  $b_I=|\bb|/2$ for some $I$, then there is a strictly semistable collinear polygon: the sides indexed by $I$ point in one direction, while the sides indexed by $I^c$ point in the opposite direction. Crossing the wall changes which of the two subsets is short. The wall-crossing is therefore a symplectic flip: on one side one resolves the collinear polygon by opening the $I$-block, and on the other side by opening the $I^c$-block, cf. \cite{Mandini2014}. The two exceptional loci are $\PP^{|I|-2}$ and $\PP^{n-|I|-2}$.

\begin{center}
\begin{tikzpicture}[scale=0.95, line cap=round, line join=round, >=stealth]

\node at (0,1.55) {\footnotesize{$a_I<1$}};
\node at (0,1.20) {\scriptsize{$I$ short}};
\coordinate (A0) at (-1.35,0);
\coordinate (A1) at (-0.75,0);
\coordinate (A2) at (-0.15,0);
\coordinate (A3) at (0.45,0);
\coordinate (A4) at (1.08,0.42);
\coordinate (A5) at (0.20,0.78);
\coordinate (A6) at (-0.88,0.55);
\draw[thick,red!70!black,->] (A0)--(A1);
\draw[thick,red!70!black,->] (A1)--(A2);
\draw[thick,red!70!black,->] (A2)--(A3);
\draw[thick,blue!70!black,->] (A3)--(A4);
\draw[thick,blue!70!black,->] (A4)--(A5);
\draw[thick,blue!70!black,->] (A5)--(A6);
\draw[thick,blue!70!black,->] (A6)--(A0);
\node[red!70!black] at (-0.45,0.36) {\scriptsize{$I$}};
\node[blue!70!black] at (0.78,0.70) {\scriptsize{$I^c$}};
\node at (0,-0.69) {\scriptsize{$\PP^{|I|-2}$}};

\node at (4.0,1.55) {\footnotesize{$a_I=1$}};
\node at (4.0,1.20) {\scriptsize{wall}};
\coordinate (B0) at (2.65,0);
\coordinate (B1) at (3.25,0);
\coordinate (B2) at (3.85,0);
\coordinate (B3) at (4.55,0);
\coordinate (B4) at (5.35,0);
\draw[thick,red!70!black,->] (B0)--(B1);
\draw[thick,red!70!black,->] (B1)--(B2);
\draw[thick,red!70!black,->] (B2)--(B3);
\draw[thick,blue!70!black,->] (B4)--(B3);
\draw[thick,blue!70!black,->] (B3)--(B2);
\draw[thick,blue!70!black,->] (B2)--(B1);
\draw[thick,blue!70!black,->] (B1)--(B0);
\node[red!70!black] at (3.55,0.35) {\scriptsize{$I$}};
\node[blue!70!black] at (3.55,-0.35) {\scriptsize{$I^c$}};
\node at (4.0,-0.69) {\scriptsize{collinear polygon}};

\node at (8.0,1.55) {\footnotesize{$a_I>1$}};
\node at (8.0,1.20) {\scriptsize{$I^c$ short}};
\coordinate (C0) at (6.65,-0.25);
\coordinate (C1) at (7.15,0.34);
\coordinate (C2) at (7.82,0.58);
\coordinate (C3) at (8.55,0.36);
\coordinate (C4) at (8.08,0.21);
\coordinate (C5) at (7.60,0.06);
\coordinate (C6) at (7.12,-0.10);
\draw[thick,red!70!black,->] (C0)--(C1);
\draw[thick,red!70!black,->] (C1)--(C2);
\draw[thick,red!70!black,->] (C2)--(C3);
\draw[thick,blue!70!black,->] (C3)--(C4);
\draw[thick,blue!70!black,->] (C4)--(C5);
\draw[thick,blue!70!black,->] (C5)--(C6);
\draw[thick,blue!70!black,->] (C6)--(C0);
\node[red!70!black] at (7.70,0.82) {\scriptsize{$I$}};
\node[blue!70!black] at (8.55,-0.05) {\scriptsize{$I^c$}};
\node at (8.0,-0.69) {\scriptsize{$\PP^{n-|I|-2}$}};

\draw[->,thick] (1.55,0) -- (2.35,0);
\draw[->,thick] (5.65,0) -- (6.35,0);

\end{tikzpicture}
\end{center}

A useful path from the blow-up chamber to the Fano model is obtained by taking $a_1=\cdots=a_{n-1}=x$ and $a_n=2-(n-1)x$. The blow-up chamber is the interval $1/(n-2)<x<1/(n-3)$. Moving toward the symmetric value $x=2/n$ crosses the walls $a_n+\sum_{j\in J}a_j=1$, with $J\subset\{1,\ldots,n-1\}$. Since this symmetric path meets several walls at the same time, one should perturb it slightly in order to obtain a genuine sequence of elementary flips. Each elementary step is one of the flips described above.

In quiver terms the same wall-crossing has the following interpretation. A representation of the star-shaped quiver with dimension vector $(2,1,\ldots,1)$ is a collection of vectors $q_i\in V=\CC^2$, one for each outer vertex. Given a line $L\subset V$ and a subset $I\subset\{1,\ldots,n\}$ such that $q_i\in L$ for all $i\in I$, there is a subrepresentation $W_{L,I}$ with central space $L$ and outer vertices indexed by $I$. For the normalized stability parameter $\theta_A=(-2,2a_1,\ldots,2a_n)$ one has $\theta_A(W_{L,I})=-2+2a_I$. Hence the wall $a_I=1$ is exactly the wall where $W_{L,I}$ has slope zero. On the side $a_I<1$, this subrepresentation is allowed by stability; on the side $a_I>1$, it destabilizes. At the wall, the strictly semistable objects are represented by two blocks supported on the line $L$ and on the quotient $V/L$, with dimension vectors $(1,\chi_I)$ and $(1,\chi_{I^c})$. Crossing the wall exchanges the two possible extension directions between these two stable factors. The exceptional loci are the projectivized extension spaces, of dimensions $|I|-2$ and $n-|I|-2$, matching the polygon flip described above.

\begin{center}
\begin{tikzpicture}[scale=0.95, line cap=round, line join=round, >=stealth,
central/.style={circle,draw,inner sep=1.4pt},
outer/.style={circle,draw,inner sep=1.1pt},
redv/.style={red!70!black},
bluev/.style={blue!70!black}]

\node at (0,1.55) {\footnotesize{$a_I<1$}};
\node at (0,1.20) {\scriptsize{$W_{L,I}$ allowed}};
\node[central] (C1) at (0,0) {\scriptsize{$V$}};
\node[outer,redv] (R11) at (-1.25,0.65) {};
\node[outer,redv] (R12) at (-1.25,0.15) {};
\node[outer,redv] (R13) at (-1.25,-0.35) {};
\node[outer,bluev] (B11) at (1.25,0.45) {};
\node[outer,bluev] (B12) at (1.25,-0.05) {};
\node[outer,bluev] (B13) at (1.25,-0.55) {};
\draw[->,thick,redv] (R11)--(C1);
\draw[->,thick,redv] (R12)--(C1);
\draw[->,thick,redv] (R13)--(C1);
\draw[->,thick,bluev] (B11)--(C1);
\draw[->,thick,bluev] (B12)--(C1);
\draw[->,thick,bluev] (B13)--(C1);
\draw[redv,thick] (-0.35,-0.25)--(0.35,0.25);
\node[redv] at (-0.95,-0.82) {\scriptsize{$I$}};
\node[bluev] at (0.95,-0.82) {\scriptsize{$I^c$}};
\node at (0,-1.15) {\scriptsize{$\theta(W_{L,I})<0$}};

\node at (4.0,1.55) {\footnotesize{$a_I=1$}};
\node at (4.0,1.20) {\scriptsize{strictly semistable}};
\node[central] (L2) at (3.45,0) {\scriptsize{$L$}};
\node[central] (Q2) at (4.55,0) {\scriptsize{$V/L$}};
\node[outer,redv] (R21) at (2.45,0.55) {};
\node[outer,redv] (R22) at (2.45,0.05) {};
\node[outer,redv] (R23) at (2.45,-0.45) {};
\node[outer,bluev] (B21) at (5.55,0.55) {};
\node[outer,bluev] (B22) at (5.55,0.05) {};
\node[outer,bluev] (B23) at (5.55,-0.45) {};
\draw[->,thick,redv] (R21)--(L2);
\draw[->,thick,redv] (R22)--(L2);
\draw[->,thick,redv] (R23)--(L2);
\draw[->,thick,bluev] (B21)--(Q2);
\draw[->,thick,bluev] (B22)--(Q2);
\draw[->,thick,bluev] (B23)--(Q2);
\node at (4.0,-0.82) {\scriptsize{$(1,\chi_I)\oplus(1,\chi_{I^c})$}};
\node at (4.0,-1.15) {\scriptsize{$\theta(W_{L,I})=0$}};

\node at (8.0,1.55) {\footnotesize{$a_I>1$}};
\node at (8.0,1.20) {\scriptsize{$W_{L,I}$ destabilizes}};
\node[central] (C3) at (8,0) {\scriptsize{$V$}};
\node[outer,redv] (R31) at (6.75,0.45) {};
\node[outer,redv] (R32) at (6.75,-0.05) {};
\node[outer,redv] (R33) at (6.75,-0.55) {};
\node[outer,bluev] (B31) at (9.25,0.65) {};
\node[outer,bluev] (B32) at (9.25,0.15) {};
\node[outer,bluev] (B33) at (9.25,-0.35) {};
\draw[->,thick,redv] (R31)--(C3);
\draw[->,thick,redv] (R32)--(C3);
\draw[->,thick,redv] (R33)--(C3);
\draw[->,thick,bluev] (B31)--(C3);
\draw[->,thick,bluev] (B32)--(C3);
\draw[->,thick,bluev] (B33)--(C3);
\draw[bluev,thick] (7.65,0.25)--(8.35,-0.25);
\node[redv] at (7.05,-0.82) {\scriptsize{$I$}};
\node[bluev] at (8.95,-0.82) {\scriptsize{$I^c$}};
\node at (8,-1.15) {\scriptsize{$\theta(W_{L,I})>0$}};

\draw[->,thick] (1.55,0) -- (2.25,0);
\draw[->,thick] (5.75,0) -- (6.45,0);

\end{tikzpicture}
\end{center}

The colored line drawn inside the central space $V$ is not an additional datum of the quiver representation. It only indicates the line $L\subset V$ supporting the critical subrepresentation, and, after crossing the wall, the complementary quotient $V/L$ appearing in the associated graded object.

\begin{Example}
We describe explicitly the wall-crossing from the blow-up chamber to the Fano model in the case $n=7$. Let $A=(a_1,\ldots,a_7)$ with $a_i>0$ and $\sum_i a_i=2$, and keep the label $7$ distinguished. The chamber corresponding to the original blow-up $Y_4=\Bl_{q_1,\ldots,q_6}\PP^4$ is
$$
a_i+a_7<1\quad\text{for all }i=1,\ldots,6,\qquad
a_i+a_j+a_7>1\quad\text{for all }1\leq i<j\leq 6.
$$
The Fano model $\Sigma_4$ is the symmetric quotient $(\PP^1)^7\quot\PGL_2$, corresponding to the weight $A_F=(2/7,\ldots,2/7)$. Thus, moving from the blow-up chamber to the Fano chamber, one has to cross the fifteen walls
$$
H_{ij}: a_7+a_i+a_j=1,\quad 1\leq i<j\leq 6.
$$
The symmetric path $a_1=\cdots=a_6=x$, $a_7=2-6x$ crosses all these walls simultaneously at $x=1/4$; a generic small perturbation gives a sequence of fifteen elementary flips, one for each pair $\{i,j\}$.

Fix one such wall $H_{ij}$ and set $I=\{7,i,j\}$. Near the wall choose a point $A^0$ with $a^0_I=1$. The two adjacent chambers are obtained by
$$
a_h^\pm=
\begin{cases}
a_h^0\mp \epsilon/3 & \text{if }h\in I,\\
a_h^0\pm \epsilon/4 & \text{if }h\notin I,
\end{cases}
\qquad 0<\epsilon\ll 1.
$$
Then $\sum_h a_h^\pm=2$, $a_I^-=1+\epsilon$ and $a_I^+=1-\epsilon$. Thus the direction from the blow-up chamber to the Fano chamber is the direction in which the triple $\{7,i,j\}$ changes from long to short.

In terms of the star-shaped quiver, the stability parameter changes from $\theta^-=\theta_{A^-}=(-2,2a_1^-,\ldots,2a_7^-)$ to $\theta^+=\theta_{A^+}=(-2,2a_1^+,\ldots,2a_7^+)$. The critical subrepresentations are those supported by a line $L\subset\CC^2$ containing the three vectors $q_7,q_i,q_j$. For such a subrepresentation $W_{L,I}$ one has $\theta_A(W_{L,I})=-2+2a_I$. Hence $W_{L,I}$ is destabilizing on the blow-up side, has slope zero on the wall, and is allowed on the Fano side. At the wall the strictly semistable objects have associated graded object supported on the two blocks $I=\{7,i,j\}$ and $I^c$. The flip exchanges the two extension directions between these two stable factors. Since $|I|=3$ and $|I^c|=4$, the elementary wall-crossing exchanges the projective spaces $\PP^1$ and $\PP^2$; in the direction from $Y_4$ to $\Sigma_4$, the $\PP^2$-side is replaced by the $\PP^1$-side.

Thus a path from $\Bl_6\PP^4$ to the Fano model is obtained by choosing an ordering of the pairs $\{i,j\}\subset\{1,\ldots,6\}$ and crossing successively the walls
$
a_7+a_i+a_j=1.
$
At the wall corresponding to $\{i,j\}$, the weights of the three legs $7,i,j$ are decreased by $\epsilon/3$, while the weights of the four complementary legs are increased by $\epsilon/4$, keeping the total weight equal to $2$. On the quiver side this is exactly the change of stability parameter described above; on the polygon side it is the wall-crossing where the three sides $7,i,j$ pass from being a long block to being a short block.
\end{Example}

\section{Parabolic bundles, multiplicative polygons and multiplicative quivers}\label{Sec3}

We now pass to the blow-up of $\PP^{n-3}$ at $n$ general points and its relation with the moduli spaces of rank two parabolic bundles with trivial determinant, and the multiplicative polygon spaces.

\subsection*{The Bauer--Mukai chamber and the birational model}
Let $Z_n:=\Bl_{q_1,\ldots,q_n}\PP^{n-3}$, where $q_1,\ldots,q_n\in\PP^{n-3}$ are general points. This is the case $r=n-3$ and $k=r+3$ of Theorem \ref{thm:MCD-rplus3}. We write a divisor class on $Z_n$ as $D=yH+\sum_{i=1}^n x_iE_i$ and use the projection
$$
\varphi_i(D)=\frac{y+x_i}{(n-2)y+\sum_{j=1}^n x_j},
\qquad i=1,\ldots,n.
$$
The image of the projectivized effective cone is the demi-hypercube
$$
\Delta=\operatorname{Conv}\left(\xi_J\mid J\subset\{1,\ldots,n\},\ \sharp J \text{ is odd}\right)\subset[0,1]^n.
$$
For $I\subset\{1,\ldots,n\}$ and $\alpha=(\alpha_1,\ldots,\alpha_n)$, set
$$
H_I(\alpha)=\sum_{j\notin I}\alpha_j+\sum_{i\in I}(1-\alpha_i).
$$
The Mori chamber decomposition of $\Eff(Z_n)$ is the cone over the chamber decomposition of $\Delta$ induced by the hyperplanes
$$
H_I(\alpha)=s,\qquad
2\leq s\leq \frac{n}{2},\qquad
\sharp I\not\equiv s \pmod 2.
$$

Let $A=(a_1,\ldots,a_n)\in\Delta$ be a generic weight, that is, assume that $A$ does not lie on any wall of the above arrangement. Let $C_A$ be the chamber of $\Delta$ containing $A$, and let $\sigma_A\subset\Eff(Z_n)$ be the corresponding Mori chamber, that is
$$
\sigma_A=\overline{\left\{
D\in\Eff(Z_n)\mid
(n-2)y+\sum_{j=1}^n x_j>0,\ \varphi(D)\in C_A
\right\}}.
$$
Choose any $\QQ$-divisor $D_A$ in the relative interior of $\sigma_A$ and define the associated Mori model by
$$
Z_A:=\operatorname{Proj}\bigoplus_{m\geq 0}H^0(Z_n,\OO_{Z_n}(mD_A)).
$$
This model depends only on the chamber $C_A$, not on the particular choice of $D_A$.

\subsection*{Rank two parabolic bundles with trivial determinant}

Fix pairwise distinct points $p_1,\ldots,p_n\in\PP^1$ and a weight $A=(a_1,\ldots,a_n)\in\Delta$, with $0<a_i<1$. A rank two quasi-parabolic bundle with trivial determinant consists of a rank two vector bundle $E$ on $\PP^1$, an isomorphism $\det E\cong\OO_{\PP^1}$, and lines
$$
V_i\subset E_{p_i},\qquad i=1,\ldots,n.
$$
The bundle $E$ is not fixed. By B--G, it splits as $E\cong\OO_{\PP^1}(m)\oplus\OO_{\PP^1}(-m)$ for some $m\geq 0$. For a line subbundle $L\subset E$, define
$$
\epsilon_i(L)=
\begin{cases}
1 & \text{if } L_{p_i}=V_i,\\
0 & \text{if } L_{p_i}\neq V_i.
\end{cases}
$$
The parabolic slopes are
$$
\mu_A(E)=\frac{1}{2}\sum_{i=1}^n a_i,\qquad
\mu_A(L)=\deg L+\sum_{i=1}^n a_i\epsilon_i(L).
$$
The parabolic bundle $(E,V_1,\ldots,V_n)$ is $A$-semistable if $\mu_A(L)\leq\mu_A(E)$ for every line subbundle $L\subset E$, and it is $A$-stable if all these inequalities are strict.

Let $M_A$ be the coarse moduli space of $S$-equivalence classes of $A$-semistable rank two parabolic bundles with trivial determinant. If $A$ is generic, semistability equals stability and $M_A$ is a smooth projective variety of dimension $n-3$.

\begin{thm}\label{thm:BM-model}
For $A$ generic, the Mori model $Z_A$ is naturally isomorphic to the moduli space $M_A$ of $A$-stable rank two parabolic bundles on $\PP^1$ with trivial determinant.
\end{thm}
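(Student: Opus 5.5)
We identify $M_A$ with a Mori model of $Z_n$ via Bauer's birational description of the parabolic moduli spaces, and then match its polarization with a divisor class in $\sigma_A$.

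\textbf{A reference model that is a blow-up.} First we pick a weight $A_0$ in a chamber of $\Delta$ where $M_{A_0}$ is explicitly $Z_n$. Take $a_n$ close to $1$ and the remaining $a_i$ small with $\sum_{i<n}a_i>1$ slightly. For such weights, $A_0$-stability forces the underlying bundle to be trivial, or at worst $\OO(1)\oplus\OO(-1)$. On the trivial-bundle locus, normalizing $V_n$ reduces the data to points of $\PP^{n-3}$. Following Bauer and Mukai, the resulting space is $\Bl_{q_1,\ldots,q_n}\PP^{n-3}$, with the $q_i$ on a rational normal curve. The exceptional divisors $E_i$ are the loci where the underlying bundle jumps, or where $V_i$ coincides with the line singled out by the other flags. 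This yields an isomorphism $M_{A_0}\cong Z_n$.

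\textbf{Identifying the polarization.} Each $M_A$ carries a natural ample $\QQ$-line bundle $\Theta_A$, the parabolic determinant (generalized theta) bundle. It is linear in $A$: in terms of the tautological classes $\mathcal L_i$ (the line bundles of the flags $V_i$) and the determinant of cohomology $\lambda$, one has $\Theta_A=\lambda^{k}\otimes\bigotimes_i\mathcal L_i^{a_i}$ up to normalization. Transport these classes to $\Pic(Z_n)$ via $M_{A_0}\cong Z_n$ and compute them in the basis $H,E_i$. The claim to verify is that $\Theta_A$ corresponds to the divisor $D$ with $\varphi(D)=A$. Concretely, one should find $y+x_i\propto a_i$ and $(n-2)y+\sum x_j\propto 1$. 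Checking this on the vertices $\xi_J$ suffices by linearity: $\varphi(E_I)=\xi_{I^c}$, and the weight $\xi_{I^c}$ is the boundary point where the divisor $E_I$ (a Brill--Noether/unstable locus) is contracted.

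\textbf{Comparing the models.} For every generic $A$, the spaces $M_{A_0}$ and $M_A$ are isomorphic on the open set of bundles that are stable for both weights. Hence there is a birational map $Z_n\dashrightarrow M_A$. This map is a birational contraction: the moduli spaces all have Picard rank $n+1$ when $A$ lies in the interior of $\Mov$, and divisors are contracted only across the boundary walls $H_I=2$ or $\alpha_i\in\{0,1\}$. Under it, the ample class $\Theta_A$ pulls back to $D_A$. Therefore
$$
M_A=\operatorname{Proj}\bigoplus_m H^0(M_A,m\Theta_A)=\operatorname{Proj}\bigoplus_m H^0(Z_n,mD_A)=Z_A,
$$
where the middle equality holds because sections extend across codimension $\geq 2$ loci and along contracted divisors. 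For $A$ outside $\Mov$ one adds the exceptional divisors as in Definition \ref{def:MCD}.

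\textbf{Main obstacle and fallback.} The hard step is proving that the VGIT walls for parabolic stability (the equalities $\mu_A(L)=\mu_A(E)$, given by $H_I(A)=s$ via $\deg L$ and $\epsilon_i$) coincide exactly with the Mori walls of Theorem \ref{thm:MCD-rplus3}, together with the linearity of $\Theta_A$. For this I would rely on Mukai's construction of $M_A$ as a GIT quotient of $\Spec\Cox(Z_n)$, identifying $\Cox(Z_n)$ with a Nagata invariant ring \cite{Mukai2005}. In that construction the GIT character equals the weight $A$ through $\varphi$, so Theorem \ref{thm:MCD-rplus3} (proved there) gives the chamber match directly, and the isomorphism follows from the Cox-ring/VGIT dictionary recalled in Section \ref{Sec1}.
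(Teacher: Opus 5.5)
Your overall mechanism matches the paper's. The paper also rests on Mukai's results: a chamber where the parabolic moduli space is $Z_n$, Bauer's wall-crossings, and the linearity of the natural polarization $D_\alpha$. It then walks from chamber to chamber, and your one-shot comparison of section rings along a birational contraction would be a legitimate substitute for that walk.

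\textbf{The reference step fails as written.} Suppose $1-a_n$ and $\sum_{i<n}a_i-1$ are small compared with the $a_i$, $i<n$, as you describe. Then on the trivial bundle, $A_0$-stability only imposes two conditions: $V_n\neq V_i$ for all $i<n$, and $V_1,\ldots,V_{n-1}$ are not all equal. If $\sum_i a_i>2$ it also excludes the single configuration lying on a degree-one subbundle $\OO(-1)\subset\OO^{\oplus 2}$. Putting $V_n=\infty$ and dividing by the affine group gives $\PP^{n-3}$, and the stratum $\OO(1)\oplus\OO(-1)$ adds one $\PP^{n-4}$. So $M_{A_0}$ is $\PP^{n-3}$ or $\Bl_1\PP^{n-3}$, of Picard number at most $2$, not $Z_n$. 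The other $n-1$ exceptional divisors appear only if $V_n$ is allowed to meet exactly one $V_i$. An example is $A_0=(a,\ldots,a,1-a)$ with $\frac1{n-2}<a<\frac1{n-4}$. There the trivial locus is the additive model $\Bl_{n-1}\PP^{n-3}$ minus a point, and the jump locus is the last exceptional $\PP^{n-4}$.

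\textbf{The polarization identity is false in this normalization.} This is the more serious gap. You propose to verify $\varphi(\Theta_A)=A$, together with the wall matching you call the main obstacle. Both fail for the trivial-determinant slope used in the statement. A wall $\mu_A(L)=\mu_A(E)$ reads $a_I-a_{I^c}=-2\deg L$, that is, $H_I(A)=\sharp I+2\deg L\equiv\sharp I\pmod 2$. This is the opposite parity to the Bauer--Mukai walls, which have $\sharp I\not\equiv s$.

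Concretely, take $n=6$ and $A=(0.3,\ldots,0.3)=\varphi(4H-\sum_iE_i)$. This weight lies on no wall, and $4H-\sum_iE_i$ is ample on $Z_6$, so $Z_A=Z_6$. But $\sum_ia_i<2$ forces $E\cong\OO^{\oplus 2}$. Hence $M_A$ is the symmetric quotient $(\PP^1)^6\quot\PGL_2$, the Segre cubic, which has strictly semistable points.

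The missing ingredient is a parity shift. The Bauer--Mukai arrangement is the wall set for $\det E\cong\OO(-1)$. Composing with $\operatorname{el}_{\{n\}}$, the correct dictionary is $Z_A\cong M_{A'}$, where $a'_n=1-a_n$ and $a'_i=a_i$ otherwise. Your corrected $A_0$ is exactly the image of Mukai's diagonal chamber under this flip.

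The paper's proof passes over the same point when it sets $M_A=U(A)$. Once the shift is inserted, your fallback to Mukai's Cox-ring/VGIT construction is essentially the paper's argument.
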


\begin{proof} 
In \cite[Section 1]{Mukai2005}, the moduli spaces $U(\alpha)$ of semistable parabolic rank two bundles on the $n$-pointed line are considered with fixed determinant. For a diagonal weight $\alpha=(a,\ldots,a)$, \cite[Proposition 1]{Mukai2005} shows that, if $$ \frac{1}{n-2}<a<\frac{1}{n-4}, $$ then $U(\alpha)$ is isomorphic to the blow-up $Z_n=\Bl_{q_1,\ldots,q_n}\PP^{n-3}$. Thus one chamber of the parabolic weight space gives the initial model $Z_n$. For general weights, \cite[Proposition 2]{Mukai2005}, based on \cite{Bauer1991}, gives the wall-crossing description. The walls are exactly $$ H_I(\alpha)=s,\qquad 2\leq s\leq \frac{n}{2},\qquad \sharp I\not\equiv s \pmod 2. $$ Inside a chamber, the isomorphism class of $U(\alpha)$ is constant. If two adjacent chambers are separated by a wall $H_I(\alpha)=s$, then the corresponding models are related by a blow-up of a smooth point for $s=2$, and by a flip replacing a $\PP^{s-2}$ with a $\PP^{n-s-2}$ for $3\leq s\leq n/2$. Moreover, \cite[Proposition 3]{Mukai2005} describes the boundary walls $\alpha_i=0$ and $\alpha_i=1$ as $\PP^1$-bundle contractions. 

On the other hand, Theorem \ref{thm:MCD-rplus3}, ultimately based on \cite[Propositions 2 and 3]{Mukai2005}, says that the Mori chamber decomposition of $\Eff(Z_n)$ is the cone over the same hyperplane arrangement, and that the movable cone is the cone over $\Pi$. In particular, the chambers in $\Pi$ correspond exactly to the small $\QQ$-factorial modifications of $Z_n$, and their wall-crossings are the same flips described above. It remains to identify the polarizations, not only the birational wall-crossings. By the GIT construction of parabolic moduli in \cite{MehtaSeshadri1980}, each $U(\alpha)$ carries a natural ample divisor, denoted $D_\alpha$ in \cite[Section 1]{Mukai2005}. 

Mukai proves that the class $D_\alpha$ varies linearly with $\alpha$ on each chamber and that, for a chamber $C$, the cone generated by the classes $D_\alpha$ with $\alpha\in C$ is the corresponding chamber in the movable cone of $Z_n$. Hence, if $A\in C_A$, the ample model of the semiample divisor $D_A$ on the corresponding small modification is precisely $U(A)$. Starting from the diagonal chamber, where $U(\alpha)\cong Z_n$, and moving to an adjacent chambers from that chamber to $C_A$, the parabolic wall-crossings and the Mori wall-crossings coincide at each step. Therefore the model obtained from $Z_n$ by the Mori chamber $\sigma_A$ is the same projective variety as the parabolic moduli space $U(A)$. With our notation $M_A=U(A)$, this yields $Z_A\cong M_A$. 
\end{proof}

\subsection*{Multiplicative polygons}

For $0<a_i<1$, let $\mathcal C_i\subset SU(2)$ be the conjugacy class of matrices with eigenvalues $\exp(\pi i a_i)$ and $\exp(-\pi i a_i)$. Define
$$
\Mult_A:=
\left\{
(g_1,\ldots,g_n)\in \mathcal C_1\times\cdots\times \mathcal C_n
\mid g_1\cdots g_n=1
\right\}/SU(2),
$$
where $SU(2)$ acts by simultaneous conjugation. 

This is the multiplicative analogue of the additive polygon space: the linear closing equation $u_1+\cdots+u_n=0$ is replaced by the group-valued closing equation $g_1\cdots g_n=1$. Equivalently, setting $h_0=1$ and $h_j=g_1\cdots g_j$, one gets a closed chain $h_0,h_1,\ldots,h_n=h_0$ in $SU(2)$ whose increments $h_{j-1}^{-1}h_j$ have prescribed conjugacy classes. Thus these are not Euclidean polygons, but closed polygons in the Lie group $SU(2)$ in the multiplicative sense.

\begin{Remark}
Assume that $0<a_i<1$ for all $i$. Then each conjugacy class $\mathcal C_i\subset SU(2)$ is the conjugacy class of a non-central element. Its stabilizer is a maximal torus $U(1)$, hence
$$
\mathcal C_i\cong SU(2)/U(1)\cong S^2
$$
and $\dim_{\RR}\mathcal C_i=2$. Therefore $\mathcal C_1\times\cdots\times\mathcal C_n$ has real dimension $2n$. The multiplicative closing condition $g_1\cdots g_n=1$ is one equation with values in $SU(2)$, so it cuts three real dimensions at regular points. The quotient by simultaneous conjugation by $SU(2)$ cuts another three real dimensions on the stable locus. Thus $\Mult_A$ has real dimension $2n-3-3=2(n-3)$.
\end{Remark}

\begin{Example}[A unit multiplicative pentagon]\label{ex:unit-multiplicative-pentagon}
Consider the central weight $A_F=(1/2,\ldots,1/2)$. Then each conjugacy class $\mathcal C_i\subset SU(2)$ consists of elements with eigenvalues $i$ and $-i$. Identifying $SU(2)$ with the unit quaternions, this is the conjugacy class of pure imaginary unit quaternions.

For instance, take
$$
g_1=\mathbf i,\qquad
g_2=\mathbf j,\qquad
g_3=\mathbf k,\qquad
g_4=\mathbf i,\qquad
g_5=\mathbf i.
$$
Then $g_1,\ldots,g_5\in\mathcal C_i$ and $g_1g_2g_3g_4g_5=\mathbf i\mathbf j\mathbf k\mathbf i\mathbf i=1$. Thus they define a point of $\Mult_{A_F}$. If $h_0=1$ and $h_j=g_1\cdots g_j$, then
$$
h_0=1,\quad h_1=\mathbf i,\quad h_2=\mathbf k,\quad h_3=-1,\quad h_4=-\mathbf i,\quad h_5=1.
$$
The following picture represents this closed chain in $SU(2)\cong S^3$ after a schematic planar projection. The arrows are not Euclidean side vectors: the label $g_j$ records the multiplicative increment $h_j=h_{j-1}g_j$.

\begin{center}
\begin{tikzpicture}[scale=1.35, line cap=round, line join=round]


\coordinate (H0) at (0,1.05);
\coordinate (H1) at (1.15,0.25);
\coordinate (H2) at (0.55,-0.95);
\coordinate (H3) at (-0.75,-0.75);
\coordinate (H4) at (-1.15,0.35);

\draw[gray!35, thick] (0,0) circle (1.35);
\draw[gray!25, dashed] (-1.35,0) arc (180:360:1.35 and 0.32);
\draw[gray!35] (1.35,0) arc (0:180:1.35 and 0.32);
\draw[gray!25] (0,-1.35) arc (-90:90:0.32 and 1.35);
\draw[gray!25, dashed] (0,1.35) arc (90:270:0.32 and 1.35);

\draw[->, thick] (H0) -- (H1);
\draw[->, thick] (H1) -- (H2);
\draw[->, thick] (H2) -- (H3);
\draw[->, thick] (H3) -- (H4);
\draw[->, thick] (H4) -- (H0);

\fill[black] (H0) circle (1pt);
\fill[black] (H1) circle (1pt);
\fill[black] (H2) circle (1pt);
\fill[black] (H3) circle (1pt);
\fill[black] (H4) circle (1pt);

\node at (0.1,1.17) {\tiny{$h_0=h_5=1$}};
\node at (1.48,0.30) {\tiny{$h_1=\mathbf i$}};
\node at (0.70,-1.09) {\tiny{$h_2=\mathbf k$}};
\node at (-0.98,-0.93) {\tiny{$h_3=-1$}};
\node at (-1.53,0.42) {\tiny{$h_4=-\mathbf i$}};

\node at (0.78,0.77) {\tiny{$g_1=\mathbf i$}};
\node at (1.1,-0.42) {\tiny{$g_2=\mathbf j$}};
\node at (-0.02,-0.98) {\tiny{$g_3=\mathbf k$}};
\node at (-1.23,-0.25) {\tiny{$g_4=\mathbf i$}};
\node at (-0.66,0.86) {\tiny{$g_5=\mathbf i$}};

\node at (3.15,0.30) {\tiny{$g_1g_2g_3g_4g_5=1$}};
\node at (3.15,-0.05) {\tiny{$h_j=h_{j-1}g_j$}};
\node at (3.15,-0.40) {\tiny{$SU(2)\cong S^3$}};

\end{tikzpicture}
\end{center}

Thus this is a polygon in the Lie group $SU(2)$, not a Euclidean polygon. The prescribed conjugacy class plays the role of the side length, and the closing condition is multiplicative.
\end{Example}

\begin{Remark}\label{sympf}
Let us spell out the symplectic structures on $M_A$ and $\Mult_A$. On $M_A$ we will use the Kähler form coming from the natural ample line bundle in the construction of the moduli space of parabolic bundles. Equivalently, if one constructs $M_A$ as a GIT quotient, this is the form induced by the chosen linearization. On the stable locus this Kähler form is an honest symplectic form.

On $\Mult_A$ the construction is parallel to ordinary symplectic reduction, but the moment map takes values in the group $SU(2)$ rather than in the dual of its Lie algebra. Each conjugacy class $\mathcal C_i\subset SU(2)$ carries a canonical $SU(2)$-invariant two-form. On the product $\mathcal C_1\times\cdots\times\mathcal C_n$ these forms are combined in such a way that the map
$$
m:\mathcal C_1\times\cdots\times\mathcal C_n\longrightarrow SU(2),
\qquad
m(g_1,\ldots,g_n)=g_1\cdots g_n
$$
plays the role of the moment map. Restricting to $m^{-1}(1)$ and quotienting by simultaneous conjugation, this two-form descends to a closed non-degenerate two-form on the stable quotient
$$
m^{-1}(1)^{s}/SU(2)\subset \Mult_A.
$$
This is the natural symplectic form on the multiplicative polygon space.
\end{Remark}

\begin{Proposition}\label{prop:MS-multiplicative-new}
There is a natural homeomorphism $M_A\cong \Mult_A$. On the stable locus this homeomorphism is a real analytic diffeomorphism. If $A$ is generic, hence semistability equals stability, it is a real analytic diffeomorphism on the whole smooth moduli space.

Moreover,with these symplectic forms in Remark \ref{sympf}, the above identification is a symplectomorphism on the stable locus.
\end{Proposition}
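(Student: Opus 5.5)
The plan is to derive the statement from the Mehta--Seshadri correspondence \cite{MehtaSeshadri1980}, specialized to rank two, trivial determinant, genus zero, and then to identify the symplectic forms through the quasi-Hamiltonian framework of \cite{AlekseevMalkinMeinrenken1998}. The proof will have four steps.

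\textbf{Step 1: from bundles to representations.} First we reinterpret $\Mult_A$ as a space of unitary representations of $\pi_1(\PP^1\setminus\{p_1,\ldots,p_n\})$. This group is generated by loops $\gamma_i$ around $p_i$ with the single relation $\gamma_1\cdots\gamma_n=1$. A homomorphism to $SU(2)$ sending $\gamma_i$ into $\mathcal C_i$ is therefore exactly a tuple $(g_1,\ldots,g_n)\in\mathcal C_1\times\cdots\times\mathcal C_n$ with $g_1\cdots g_n=1$. Conjugation of representations is simultaneous conjugation, so $\Mult_A$ is the relative $SU(2)$-character variety with prescribed local monodromies.

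Mehta--Seshadri then associates to each such representation a parabolic bundle. One takes the flat bundle on the punctured line and extends it across $p_i$ using the local monodromy, with eigenvalues $\exp(\pm\pi i a_i)$. This gives a degree zero parabolic bundle. Its flag $V_i$ is the eigenline of $g_i$, and its weights are determined by $a_i$, after the standard shift turning weights $(a_i/2,-a_i/2)$ into a flag with weight $a_i$ and trivial determinant; the determinant is trivial because each $g_i\in SU(2)$ has determinant one. The Mehta--Seshadri theorem says this is a bijection between:
\begin{enumerate}
\item irreducible representations and $A$-stable parabolic bundles;
\item reducible (polystable) representations and $S$-equivalence classes of strictly semistable bundles.
\end{enumerate}
One must check that the parabolic stability condition of the paper, namely $\deg L+\sum a_i\epsilon_i(L)\le\frac12\sum a_i$, matches the normalization of weights in \cite{MehtaSeshadri1980}. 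This comes down to checking that a reducible representation, which splits into $U(1)$ characters, corresponds exactly to a subbundle $L$ achieving equality.

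\textbf{Step 2: topology.} Continuity of the map and of its inverse follows from the analytic construction, in the form proved by Mehta--Seshadri (continuity in families). Since both $M_A$ and $\Mult_A$ are compact Hausdorff, a continuous bijection is automatically a homeomorphism.

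\textbf{Step 3: real analyticity on the stable locus.} On the irreducible locus, $SU(2)$ acts on $m^{-1}(1)$ with stabilizer $\pm I$. The point $1$ is a regular value there, since the differential of $m$ is surjective when the stabilizer is finite. Hence the stable part of $\Mult_A$ is a real analytic manifold. The stable locus of $M_A$ is smooth algebraic, and the Mehta--Seshadri map is given locally by holonomy of the unique flat unitary connection, which depends analytically on parameters. For generic $A$ there are no reducibles, and this gives the global statement.

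\textbf{Step 4: symplectomorphism (the main obstacle).} Here I will use the Atiyah--Bott/Goldman picture \cite{Goldman1984}.
\begin{enumerate}
\item Tangent spaces at a stable point are the parabolic group cohomology $H^1_{par}(\pi_1,\mathfrak{su}(2)_{\mathrm{Ad}\rho})$.
\item The symplectic form on $\Mult_A$ obtained by quasi-Hamiltonian reduction \cite{AlekseevMalkinMeinrenken1998}, i.e. the form of Remark \ref{sympf}, agrees with the Goldman pairing given by cup product and the Killing form. This agreement is proved in \cite{GuruprasadHuebschmannJeffreyWeinstein1997,AlekseevMalkinMeinrenken1998}.
\item On the bundle side, the Kähler form on $M_A$ is computed in Dolbeault terms via $H^1(\mathrm{ParEnd}\,E)$. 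Mehta--Seshadri/Narasimhan--Seshadri transports it to the $L^2$ pairing of harmonic representatives, which equals the Goldman form.
\end{enumerate}
The delicate point is matching normalizations. The Kähler class of the ample polarization on $M_A$ must be the Atiyah--Bott form scaled by a constant, and the polarization must be the one whose class is proportional to the parabolic determinant line bundle with weights $A$. I would handle this by citing the parabolic version of the Quillen metric result, which identifies the curvature of the determinant line bundle with the Goldman form. I then rescale so that the constant is $1$, as was done for KKS versus Fubini--Study in Proposition \ref{prop:additive-git}.
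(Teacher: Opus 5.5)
Your proposal is correct and follows essentially the same route as the paper. The common steps are: the presentation of $\pi_1(\PP^1\setminus\{p_1,\ldots,p_n\})$ identifying $\Mult_A$ with the relative $SU(2)$-character variety; the shift to weights $\pm a_i/2$ and the Mehta--Seshadri bijection; the compact Hausdorff argument for the homeomorphism; real analytic dependence on the irreducible locus; and the quasi-Hamiltonian/Goldman comparison of the symplectic forms via \cite{GuruprasadHuebschmannJeffreyWeinstein1997,AlekseevMalkinMeinrenken1998}. Your additions are details the paper leaves implicit, namely the regular-value argument on the irreducible locus and the explicit treatment of the polarization normalization (parabolic determinant bundle and Quillen metric), which the paper compresses into ``the normalization fixed by the eigenvalues $\exp(\pm\pi i a_i)$''.
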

\begin{proof}
Let $X^\circ=\PP^1\setminus\{p_1,\ldots,p_n\}$. Choose positively oriented simple loops $\gamma_i$ around $p_i$, based at the same point and ordered so that $
\gamma_1\cdots\gamma_n=1$. Then
$$
\pi_1(X^\circ)
=
\langle \gamma_1,\ldots,\gamma_n\mid \gamma_1\cdots\gamma_n=1\rangle.
$$
Therefore a homomorphism $\rho:\pi_1(X^\circ)\to SU(2)$ with $\rho(\gamma_i)\in\mathcal C_i$ is the same as a tuple $(g_1,\ldots,g_n)$ with $g_i\in\mathcal C_i$ and $g_1\cdots g_n=1$, by setting $g_i=\rho(\gamma_i)$. Changing the unitary frame at the base point conjugates all matrices $g_i$ by the same element of $SU(2)$. Hence the corresponding representation quotient is exactly
$$
\left\{
(g_1,\ldots,g_n)\in \mathcal C_1\times\cdots\times\mathcal C_n
\mid g_1\cdots g_n=1
\right\}/SU(2)
=
\Mult_A.
$$
We now compare this representation quotient with the parabolic moduli space. The stability condition used above can be written in the usual parabolic form by replacing the weights $0,a_i$ at $p_i$ with the shifted weights $-a_i/2,a_i/2$. Indeed, for a line subbundle $L\subset E$, the inequality $
\deg L+\sum_{L_{p_i}=V_i}a_i\leq \frac{1}{2}\sum_{i=1}^n a_i$ is equivalent to $
\deg L+\sum_{L_{p_i}=V_i}\frac{a_i}{2}
+\sum_{L_{p_i}\neq V_i}\left(-\frac{a_i}{2}\right)
\leq 0$. Thus the parabolic degree of $E$ is zero, since $\det E\cong\OO_{\PP^1}$ and the two shifted weights at each marked point have sum zero. For the shifted weights $-a_i/2,a_i/2$, the prescribed local unitary monodromy has eigenvalues $\exp(-\pi i a_i),\,\exp(\pi i a_i)$. Thus the local monodromy around $p_i$ lies in the conjugacy class $\mathcal C_i$.

By \cite[Theorem 4.1]{MehtaSeshadri1980}, polystable parabolic bundles of parabolic degree zero are identified with completely reducible unitary representations of $\pi_1(X^\circ)$ with the prescribed local conjugacy classes. In the stable case, this identifies stable parabolic bundles with irreducible unitary representations. Applied to the rank two determinant-one situation above, the theorem gives a natural bijection $M_A\longrightarrow \Mult_A$.

Concretely, a stable parabolic bundle determines its adapted flat unitary connection on $X^\circ$; the monodromy of this connection is the corresponding representation. Conversely, a unitary representation with local monodromies in the classes $\mathcal C_i$ determines a flat unitary bundle on $X^\circ$, and the asymptotic eigenspace data at each puncture determine the parabolic lines $V_i$ and the shifted weights $-a_i/2,a_i/2$.

This bijection is continuous, and both sides are compact Hausdorff spaces: $M_A$ is projective, while $\Mult_A$ is a quotient of the closed subset $g_1\cdots g_n=1$ of the compact space $\mathcal C_1\times\cdots\times\mathcal C_n$ by the compact group $SU(2)$. Hence the bijection is a homeomorphism.

On the stable locus the stabilizer is finite modulo the center, and both quotients are smooth real analytic orbifolds, smooth manifolds if the effective action is free. The construction above depends real analytically on the flat unitary connection and its local monodromy data; hence the homeomorphism restricts to a real analytic diffeomorphism on the stable locus. This is the sense in which the identification is real analytic: it is not a holomorphic isomorphism with respect to a complex structure on the representation side, but an isomorphism of the underlying real analytic spaces.

Finally, we compare the symplectic structures. On $M_A$ one has the natural Kähler form obtained from the parabolic GIT, construction of the moduli of parabolic bundles. On $\Mult_A$ each conjugacy class $\mathcal C_i$ carries its standard quasi-Hamiltonian $SU(2)$-structure, the product $\mathcal C_1\times\cdots\times\mathcal C_n$ has group-valued moment map $(g_1,\ldots,g_n)\longmapsto g_1\cdots g_n$, and $\Mult_A$ is the quasi-Hamiltonian reduction at $1$. The induced reduced two-form is the Goldman--Atiyah--Bott symplectic form for the character variety with fixed boundary holonomies. Under the correspondence above, the parabolic Kähler form and this reduced symplectic form agree, with the normalization fixed by the eigenvalues $\exp(\pm\pi i a_i)$ \cite{GuruprasadHuebschmannJeffreyWeinstein1997,AlekseevMalkinMeinrenken1998}. Therefore the real analytic diffeomorphism on the stable locus is a symplectomorphism.
\end{proof}

\subsection*{The multiplicative quiver viewpoint}

The multiplicative polygon space can also be encoded by a star-shaped quiver. Let $Q$ be the quiver with one central vertex $0$ and outer vertices $1,\ldots,n$. We attach the group $SU(2)$ to the central vertex, and we attach to the $i$-th leg the conjugacy class $\mathcal C_i\subset SU(2)$.

A compact multiplicative representation of this star-shaped quiver is a tuple $(g_1,\ldots,g_n)\in \mathcal C_1\times\cdots\times\mathcal C_n$. The multiplicative moment-map equation at the central vertex is the product relation $
g_1\cdots g_n=1$. The compact base-change group is $SU(2)$, acting by simultaneous conjugation: $
h\cdot(g_1,\ldots,g_n)=(hg_1h^{-1},\ldots,hg_nh^{-1})$. Thus the compact multiplicative quiver quotient is
$$
\mathcal M_Q^{\mathrm{mult},c}(A):=
\left\{
(g_1,\ldots,g_n)\in \mathcal C_1\times\cdots\times\mathcal C_n
\mid g_1\cdots g_n=1
\right\}/SU(2).
$$
This is the multiplicative analogue of the additive quiver construction. In the additive case the data lie in coadjoint orbits, the equation is the linear closing condition $u_1+\cdots+u_n=0$, and the quotient is taken by rotations. In the multiplicative case the data lie in conjugacy classes, the equation is the group-valued closing condition $g_1\cdots g_n=1$, and the quotient is taken by simultaneous conjugation.

\begin{Remark}
The construction above is the compact $SU(2)$ form of the multiplicative quiver viewpoint. The corresponding complex algebraic theory is given by multiplicative quiver varieties \cite{CrawleyBoeveyShaw2006,Yamakawa2008}.
\end{Remark}

\begin{Example}\label{Mult_Q_5}
For $n=5$, the compact multiplicative quiver has the same underlying star-shaped graph as the additive quiver in Example \ref{Add_Q_5}. What changes is not the graph, but the nature of the data attached to it.

In the additive case a representation is given by five vectors $q_1,\ldots,q_5\in\CC^2$, or, after quotienting by the scalar actions on the outer vertices, by five points $[q_1],\ldots,[q_5]\in\PP^1$. In the compact multiplicative case one instead fixes conjugacy classes $\mathcal C_i\subset SU(2)$ and considers tuples $
(g_1,\ldots,g_5)\in \mathcal C_1\times\cdots\times\mathcal C_5$.

The equation at the central vertex is no longer the additive moment-map equation, but the multiplicative relation $
g_1g_2g_3g_4g_5=1$. The compact base-change group is $SU(2)$, acting by simultaneous conjugation. Hence the corresponding compact multiplicative quiver quotient is
$$
\mathcal M_Q^{\operatorname{mult},c}(A)=
\left\{
(g_1,\ldots,g_5)\in \mathcal C_1\times\cdots\times\mathcal C_5
\mid
g_1g_2g_3g_4g_5=1
\right\}/SU(2).
$$
That is, if $\lambda_i=\exp(\pi i a_i)$, an element $g_i\in\mathcal C_i$ is determined by its $\lambda_i$-eigenline $L_i\in\PP^1$. Thus one may also view a point as a configuration $(L_1,\ldots,L_5)\in(\PP^1)^5$ satisfying the multiplicative closing condition $g_1(L_1)\cdots g_5(L_5)=1$, where $g_i(L_i)$ is the unique element of $SU(2)$ with eigenvalue $\lambda_i$ on $L_i$ and eigenvalue $\lambda_i^{-1}$ on $L_i^\perp$.

Thus the graph is the same as in Example \ref{Add_Q_5}, but the additive linear data $q_i:\CC\to\CC^2$ are replaced by unitary monodromy data $g_i\in\mathcal C_i$, and the additive closing equation is replaced by a product relation in $SU(2)$.
\end{Example}

\begin{Proposition}\label{prop:mult-quiver-polygon}
There is a natural identification $\mathcal M_Q^{\mathrm{mult},c}(A)=\Mult_A$.
\end{Proposition}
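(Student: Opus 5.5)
The identification is essentially tautological, and I will prove it by unwinding the two definitions and checking that each piece of data matches. Unwinding them, I will observe that $\mathcal M_Q^{\mathrm{mult},c}(A)$ and $\Mult_A$ are the quotient of the same subset
$$
m^{-1}(1)=\left\{(g_1,\ldots,g_n)\in\mathcal C_1\times\cdots\times\mathcal C_n\mid g_1\cdots g_n=1\right\}
$$
by the same action of $SU(2)$, namely simultaneous conjugation. The plan therefore has three steps.

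First, I will check that the leg data agree. A compact multiplicative representation attaches to the $i$-th leg an element of the conjugacy class $\mathcal C_i$ with eigenvalues $\exp(\pm\pi i a_i)$. These are exactly the classes used in the definition of $\Mult_A$.

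Second, I will check that the central relation is the same. The multiplicative moment-map equation at the central vertex is $g_1\cdots g_n=1$, which is the closing condition defining $\Mult_A$. This requires fixing the same cyclic ordering of the legs as the ordering of the loops $\gamma_1,\ldots,\gamma_n$ used in Proposition \ref{prop:MS-multiplicative-new}. A different ordering would change the product relation by conjugating partial products, so I will simply fix this convention.

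Third, I will check that the compact base-change group $SU(2)$ acts on both sides by $h\cdot(g_i)=(hg_ih^{-1})$. The identity map on $m^{-1}(1)$ is then $SU(2)$-equivariant and descends to a bijection of quotients. It is a homeomorphism because it comes from the identity on the same space with the same quotient topology.

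I also want the identification to respect the extra structure, and this is the only point needing care. The quasi-Hamiltonian two-form of Remark \ref{sympf} is defined on the same product of conjugacy classes with the same group-valued moment map, so the reduced forms coincide on the stable locus. The projective structure on $\mathcal M_Q^{\mathrm{mult},c}(A)$ is by definition transported from $M_A$ via Proposition \ref{prop:MS-multiplicative-new}, so it agrees as well.

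For the eigenline description, I will use that $g_i\mapsto L_i$, the $\exp(\pi i a_i)$-eigenline, is an $SU(2)$-equivariant diffeomorphism $\mathcal C_i\cong\PP^1$. Its inverse is $L_i\mapsto g_i(L_i)$, as in Example \ref{Mult_Q_5}. This shows the same quotient is also $\operatorname{Conf}^{\mathrm{mult}}_A(\PP^1)/SU(2)$.
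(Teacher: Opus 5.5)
Your proposal is correct and matches the paper's proof: both spaces are by definition $m^{-1}(1)/SU(2)$ for the same simultaneous-conjugation action, so the identity of $m^{-1}(1)$ descends to a bijection, which is a homeomorphism for the common quotient topology. Your extra remarks are consistent with the paper but not needed for this statement: the ordering convention is automatic, since both definitions use the same product $g_1\cdots g_n$; the symplectic and transported projective structures are defined via this identification; and the eigenline model is treated separately in the proof of Theorem \ref{thm:multiplicative-summary}.
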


\begin{proof}
By construction, a compact multiplicative representation of the star-shaped quiver is a tuple
$$
(g_1,\ldots,g_n)\in \mathcal C_1\times\cdots\times\mathcal C_n.
$$
The compact base-change group is $SU(2)$ at the central vertex. It acts on such tuples by simultaneous conjugation: $
h\cdot(g_1,\ldots,g_n)
=
(hg_1h^{-1},\ldots,hg_nh^{-1}),
\, h\in SU(2)$.

The multiplicative moment-map equation at the central vertex is the product relation $g_1\cdots g_n=1$. Thus, if
$$
m:\mathcal C_1\times\cdots\times\mathcal C_n\longrightarrow SU(2),
\qquad
m(g_1,\ldots,g_n)=g_1\cdots g_n,
$$
then the space of solutions of the multiplicative quiver equation is
$$
m^{-1}(1)
=
\left\{
(g_1,\ldots,g_n)\in \mathcal C_1\times\cdots\times\mathcal C_n
\mid g_1\cdots g_n=1
\right\}.
$$
The compact multiplicative quiver quotient is therefore
$$
\mathcal M_Q^{\mathrm{mult},c}(A)
=
m^{-1}(1)/SU(2),
$$
where $SU(2)$ acts as above. On the other hand, the multiplicative polygon space was defined as
$$
\Mult_A
=
\left\{
(g_1,\ldots,g_n)\in \mathcal C_1\times\cdots\times\mathcal C_n
\mid g_1\cdots g_n=1
\right\}/SU(2),
$$
again with respect to simultaneous conjugation. Hence the set of solutions and the equivalence relation are exactly the same on both sides. Therefore the identity map on $m^{-1}(1)$ descends to a canonical bijection
$$
\mathcal M_Q^{\mathrm{mult},c}(A)\longrightarrow \Mult_A.
$$
Since both quotients are endowed with the same quotient topology, this bijection is the natural identification claimed.
\end{proof}

\begin{Remark}
There is a configuration-theoretic way to describe $\Mult_A$, but it is different from the GIT configuration quotient of the additive case. Fix the standard Hermitian form on $\CC^2$, and set
$
\lambda_j=\exp(\pi i a_j)
$
for $j=1,\ldots,n$. For a line $L\in\PP^1=\PP(\CC^2)$, let $L^\perp$ be its Hermitian orthogonal complement. There is a unique element $g_j(L)\in SU(2)$ whose eigenvalue on $L$ is $\lambda_j$ and whose eigenvalue on $L^\perp$ is $\lambda_j^{-1}$. Equivalently,
$$
g_j(L)=\lambda_j P_L+\lambda_j^{-1}(\operatorname{Id}-P_L),
$$
where $P_L$ is the Hermitian orthogonal projection onto $L$. We define the space of multiplicative configurations of weight $A$ by
$$
\operatorname{Conf}^{\operatorname{mult}}_A(\PP^1):=
\left\{
(L_1,\ldots,L_n)\in(\PP^1)^n
\mid
g_1(L_1)\cdots g_n(L_n)=1
\right\}.
$$
The group $SU(2)$ acts diagonally on $(\PP^1)^n$, and the construction of $g_j(L_j)$ is $SU(2)$-equivariant. Hence $SU(2)$ preserves $\operatorname{Conf}^{\operatorname{mult}}_A(\PP^1)$, and we obtain
$$
\Mult_A
\cong
\operatorname{Conf}^{\operatorname{mult}}_A(\PP^1)/SU(2).
$$
Thus a multiplicative polygon may be viewed as an ordered configuration of points on $\PP^1$ satisfying a multiplicative closing condition.

This should not be confused with the additive GIT quotient of configurations. In the additive case the equation $u_1+\cdots+u_n=0$ is the ordinary moment-map equation for the Hamiltonian action of $SU(2)$ on $(\PP^1)^n$, and Kempf--Ness identifies the quotient with $(\PP^1)^n\quot\PGL_2$. In the multiplicative case the condition
$
g_1(L_1)\cdots g_n(L_n)=1
$
is a monodromy relation in $SU(2)$. Moreover, the construction of $g_j(L_j)$ uses the Hermitian complement $L_j^\perp$, so it is natural for the unitary group $SU(2)$, not for the full complex group $\PGL_2$. Therefore $\operatorname{Conf}^{\operatorname{mult}}_A(\PP^1)/SU(2)$ is a unitary multiplicative configuration quotient, not a Hassett--GIT quotient or a moduli space of weighted pointed rational curves.
\end{Remark}

\begin{Remark}
In the multiplicative dictionary the weights are not defined only up to scale. A vector $A=(a_1,\ldots,a_n)\in\Delta$, with $0<a_i<1$, is used simultaneously as the parabolic weight vector, as the angle vector of the prescribed conjugacy classes, and as the parameter of the compact multiplicative quiver quotient. More precisely, set $\lambda_i=\exp(\pi i a_i)$ and let $\mathcal C_i\subset SU(2)$ be the conjugacy class with eigenvalues $\lambda_i$ and $\lambda_i^{-1}$. The moduli space $M_A$ uses the parabolic weights $A$, while $\Mult_A$ and $\mathcal M_Q^{\operatorname{mult},c}(A)$ use the conjugacy classes $\mathcal C_1,\ldots,\mathcal C_n$ and the multiplicative relation $g_1\cdots g_n=1$. The multiplicative configuration model uses the same data in eigenline form: for a line $L\in\PP^1$, the element attached to $L$ is
$$
g_i(L)=\lambda_i P_L+\lambda_i^{-1}(\operatorname{Id}-P_L),
$$
where $P_L$ is the Hermitian projection onto $L$. Thus the passage among the parameters is
$$
A=(a_i)
\quad\longleftrightarrow\quad
(\lambda_i=\exp(\pi i a_i))
\quad\longleftrightarrow\quad
(\mathcal C_i)
\quad\longleftrightarrow\quad
\operatorname{Conf}^{\operatorname{mult}}_A(\PP^1).
$$
Conversely, a non-central conjugacy class $\mathcal C_i\subset SU(2)$ determines $a_i\in(0,1)$ uniquely by $\operatorname{tr}(g)=2\cos(\pi a_i)$ for $g\in\mathcal C_i$. Hence, unlike in the additive case, replacing $A$ by $\lambda A$ is not merely a normalization: it changes the conjugacy classes, the monodromy equation, the parabolic weights, and therefore the corresponding moduli spaces. Wall-crossing in the multiplicative setting means moving the actual point $A\in\Delta$ across the Bauer--Mukai walls; the same variation changes at once the parabolic stability chamber, the prescribed unitary conjugacy classes, and the compact multiplicative quiver parameters.
\end{Remark}

We summarize the multiplicative dictionary in the following statement.

\begin{thm}\label{thm:multiplicative-summary}
Let $A=(a_1,\ldots,a_n)\in\Delta$ be generic, with $0<a_i<1$ for all $i$. Let $C_A$ be the chamber of the Bauer--Mukai decomposition of $\Delta$ containing $A$, and let $\sigma_A\subset\Eff(Z_n)$ be the corresponding Mori chamber. If $Z_A$ is the Mori model of $Z_n$ associated with $\sigma_A$, then there are natural identifications
$$
Z_A
\cong
M_A
\cong
\Mult_A
\cong
\operatorname{Conf}^{\operatorname{mult}}_A(\PP^1)/SU(2)
=
\mathcal M_Q^{\operatorname{mult},c}(A).
$$
More precisely, $Z_A\cong M_A$ is an isomorphism of projective varieties. The identification $M_A\cong\Mult_A$ is the Mehta--Seshadri correspondence: it is a homeomorphism, and on the stable locus it is a real analytic symplectomorphism. 
\end{thm}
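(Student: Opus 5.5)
The plan is to assemble the chain from results already established earlier in the paper, checking at each link that the parameters and the category of the identification match what the statement claims. All spaces are taken with the same $A$, viewed as a point of $\Delta$ and not up to scale.

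\textbf{Step 1: $Z_A\cong M_A$.} This is Theorem \ref{thm:BM-model}. The only additional point is that $Z_A$ is well defined. Genericity of $A$ puts $\varphi^{-1}(A)$ in the interior of a single maximal cell $C_A$, so $\sigma_A$ is a maximal Mori chamber. By the discussion preceding Theorem \ref{thm:BM-model}, the model $\operatorname{Proj}R(Z_n,D_A)$ does not depend on the choice of $D_A$ in the interior of $\sigma_A$. Theorem \ref{thm:BM-model} then gives an isomorphism of projective varieties, obtained by matching Mukai's polarizations $D_\alpha$ with the Mori chamber and following the wall-crossings from the diagonal chamber.

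\textbf{Step 2: $M_A\cong\Mult_A$.} This is Proposition \ref{prop:MS-multiplicative-new}. We use the shifted weights $\mp a_i/2$, so that the local monodromies lie in $\mathcal C_i$. Genericity makes every point stable, so the map is a homeomorphism. On the stable locus it is a real analytic diffeomorphism, and it is a symplectomorphism for the forms of Remark \ref{sympf}.

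\textbf{Step 3: the eigenline model.} The map $L\mapsto g_i(L)=\lambda_iP_L+\lambda_i^{-1}(\operatorname{Id}-P_L)$ is an $SU(2)$-equivariant real analytic diffeomorphism $\PP^1\to\mathcal C_i$. Surjectivity holds because every $g\in\mathcal C_i$ is unitarily diagonalizable with orthogonal eigenlines. Injectivity holds because $\lambda_i\neq\lambda_i^{-1}$ when $0<a_i<1$, so $L$ is recovered as the $\lambda_i$-eigenline of $g$. Taking the product of these maps gives an equivariant diffeomorphism $(\PP^1)^n\to\mathcal C_1\times\cdots\times\mathcal C_n$, and it carries $\operatorname{Conf}^{\operatorname{mult}}_A(\PP^1)$ exactly onto $m^{-1}(1)$. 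Passing to quotients by $SU(2)$ yields a homeomorphism $\operatorname{Conf}^{\operatorname{mult}}_A(\PP^1)/SU(2)\cong\Mult_A$.

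\textbf{Step 4: the quiver model.} The final equality $\Mult_A=\mathcal M_Q^{\operatorname{mult},c}(A)$ is Proposition \ref{prop:mult-quiver-polygon}, since both sides are the same quotient $m^{-1}(1)/SU(2)$. Composing Steps 3 and 4 gives the displayed equality of the eigenline model with $\mathcal M_Q^{\operatorname{mult},c}(A)$.

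\textbf{Main obstacle.} The delicate part is Step 1, specifically that the Mori model attached to $\sigma_A$ coincides with $M_A$ as a polarized variety, not only birationally. This rests on Mukai's linearity of $D_\alpha$ on chambers and on the correspondence between chambers in $\Pi$ and Mori chambers given by Theorem \ref{thm:MCD-rplus3}.

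Chambers outside $\Mov(Z_n)$ need a separate check. These are cells beyond walls $H_I=2$, where the model is a divisorial contraction. There I would verify that Mukai's wall-crossing (a blow-down of a smooth point) agrees with the contraction of $E_{I^c}$ described in Theorem \ref{thm:MCD-rplus3}. Both are Proj's of the same section ring, so the identification should persist.
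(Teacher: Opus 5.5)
Your proposal follows the paper's proof essentially step for step: Theorem~\ref{thm:BM-model} gives $Z_A\cong M_A$, Proposition~\ref{prop:MS-multiplicative-new} gives the Mehta--Seshadri identification $M_A\cong\Mult_A$, the $SU(2)$-equivariant eigenline map $L\mapsto g_i(L)$ identifies $\PP^1$ with $\mathcal C_i$ and hence the configuration quotient with $\Mult_A$, and Proposition~\ref{prop:mult-quiver-polygon} gives the equality with the quiver quotient. The extra details you add (injectivity from $\lambda_i\neq\lambda_i^{-1}$, independence of the choice of $D_A$) are fine, and your worry about chambers outside $\Mov(Z_n)$ is already covered because Theorem~\ref{thm:BM-model} is stated for every generic $A$; the homeomorphism $M_A\cong\Mult_A$ also does not need genericity, which is only used for the real analytic statement on the whole space.
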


\begin{proof}
The chamber $\sigma_A$ is, by construction, the cone over the chamber $C_A$ of the demi-hypercube containing the parabolic weight $A$. By Theorem \ref{thm:BM-model}, the Mori model of $Z_n$ corresponding to $\sigma_A$ is the parabolic moduli space $M_A$. This yields the algebraic isomorphism $Z_A\cong M_A$.

By Proposition \ref{prop:MS-multiplicative-new}, the moduli space $M_A$ is identified with the unitary character variety
$$
\Mult_A=
\left\{
(g_1,\ldots,g_n)\in\mathcal C_1\times\cdots\times\mathcal C_n
\mid
g_1\cdots g_n=1
\right\}/SU(2).
$$
This is the Mehta--Seshadri identification. It is a homeomorphism on the compact moduli spaces, and on the stable locus it is a real analytic diffeomorphism. With the Kähler form on $M_A$ and the reduced symplectic form on $\Mult_A$, it is a symplectomorphism on the stable locus.

We now describe the configuration-theoretic form of $\Mult_A$. Fix the standard Hermitian form on $\CC^2$ and set $\lambda_j=\exp(\pi i a_j)$. For a line $L\in\PP^1=\PP(\CC^2)$, let $L^\perp$ be its Hermitian orthogonal complement. There is a unique element $g_j(L)\in SU(2)$ whose eigenvalue on $L$ is $\lambda_j$ and whose eigenvalue on $L^\perp$ is $\lambda_j^{-1}$. Explicitly,
$$
g_j(L)=\lambda_jP_L+\lambda_j^{-1}(\operatorname{Id}-P_L),
$$
where $P_L$ is the Hermitian orthogonal projection onto $L$. This yields an $SU(2)$-equivariant identification $\PP^1\cong\mathcal C_j$, sending $L$ to $g_j(L)$.

Define the space of multiplicative configurations of weight $A$ by
$$
\operatorname{Conf}^{\operatorname{mult}}_A(\PP^1):=
\left\{
(L_1,\ldots,L_n)\in(\PP^1)^n
\mid
g_1(L_1)\cdots g_n(L_n)=1
\right\}.
$$
The diagonal action of $SU(2)$ on $(\PP^1)^n$ preserves this locus, since the construction $L\mapsto g_j(L)$ is $SU(2)$-equivariant. Therefore the eigenline map induces a natural identification
$$
\operatorname{Conf}^{\operatorname{mult}}_A(\PP^1)/SU(2)
\cong
\Mult_A.
$$
This is a configuration-space description, but not a GIT quotient by $\PGL_2$: the construction uses the Hermitian complement $L^\perp$, and the defining equation is the monodromy relation $g_1(L_1)\cdots g_n(L_n)=1$.

Finally, by Proposition \ref{prop:mult-quiver-polygon}, the compact multiplicative quiver quotient of the star-shaped quiver is
$$
\mathcal M_Q^{\operatorname{mult},c}(A)
=
\left\{
(g_1,\ldots,g_n)\in\mathcal C_1\times\cdots\times\mathcal C_n
\mid
g_1\cdots g_n=1
\right\}/SU(2).
$$
This is exactly $\Mult_A$. Hence
$$
Z_A
\cong
M_A
\cong
\Mult_A
\cong
\operatorname{Conf}^{\operatorname{mult}}_A(\PP^1)/SU(2)
=
\mathcal M_Q^{\operatorname{mult},c}(A),
$$
with the stated nature of each identification.
\end{proof}

\section{Automorphisms of polygon and quiver moduli spaces}\label{Sec4}

We now discuss automorphisms of the polygon and quiver moduli spaces appearing above. The additive and multiplicative pictures have different algebraic natures.

In the additive case, the algebraic structure is natural from the outset. Indeed, the coadjoint orbit $S^2_{b_i}$ is identified with the polarized projective line $(\PP^1,\OO_{\PP^1}(b_i))$, and the Kempf--Ness theorem identifies the symplectic quotient defining $\Padd_{\bb}$ with the GIT quotient $(\PP^1)^n\quot_{\bb}\PGL_2$. Hence $\Padd_{\bb}$ carries the natural projective structure coming from this GIT quotient. The same holds for the ordinary additive quiver moduli space, since $\mathcal M_Q(\bb)\cong(\PP^1)^n\quot_{\bb}\PGL_2\cong\Padd_{\bb}$.

In the multiplicative case, the quotient $\Mult_A$ is naturally a compact real analytic symplectic quotient. It can also be viewed as a real algebraic quotient in its unitary presentation, since $SU(2)$ and the conjugacy classes $\mathcal C_i$ are real algebraic. However, the complex projective structure used in this section is the one obtained from the parabolic moduli space through the identifications of Theorem \ref{thm:multiplicative-summary}. Thus we endow $
\Mult_A,\,
\operatorname{Conf}^{\operatorname{mult}}_A(\PP^1)/SU(2),
\,
\mathcal M_Q^{\operatorname{mult},c}(A)$ with the unique projective algebraic structures for which the identifications $
M_A
\cong
\Mult_A
\cong
\operatorname{Conf}^{\operatorname{mult}}_A(\PP^1)/SU(2)
=
\mathcal M_Q^{\operatorname{mult},c}(A)$ are isomorphisms of projective varieties. In what follows, $\Aut(\Mult_A)$, $\Aut(\operatorname{Conf}^{\operatorname{mult}}_A(\PP^1)/SU(2))$, $\Aut(\mathcal M_Q^{\operatorname{mult},c}(A))$ always refer to this projective algebraic structure.

\subsection*{Automorphisms in the multiplicative case}

Let $p_1,\ldots,p_n\in\PP^1$ be general points, and let $A=(a_1,\ldots,a_n)$ be a parabolic weight. Recall that $M_A$ denotes the moduli space of rank two parabolic bundles with trivial determinant on $\PP^1$, semistable with respect to $A$.

Let $R\subset\{1,\ldots,n\}$ have even cardinality. Given a quasi-parabolic bundle $(E,V_1,\ldots,V_n)$, the elementary transformation centered at $R$ is obtained from the exact sequence
$$
0\longrightarrow E'\longrightarrow E\longrightarrow
\bigoplus_{i\in R}(E_{p_i}/V_i)\otimes\OO_{p_i}
\longrightarrow 0
$$
and then twisting $E'$ by $\OO_{\PP^1}(\sharp R/2)$ in order to restore degree zero. We will denote the resulting transformation by $\operatorname{el}_R$. The transformations $\operatorname{el}_R$, with $\sharp R$ even, form a group $El\cong(\ZZ/2\ZZ)^{n-1}$.

The transformation $\operatorname{el}_R$ changes the weights by replacing $a_i$ with $1-a_i$ for $i\in R$. Thus we set $A^R=(a'_1,\ldots,a'_n)$, where
$$
a'_i=
\begin{cases}
1-a_i & \text{if } i\in R,\\
a_i & \text{if } i\notin R.
\end{cases}
$$
Then $\operatorname{el}_R$ gives an isomorphism $M_A\to M_{A^R}$. Let $C_A$ be the chamber of the Bauer wall-and-chamber decomposition containing $A$, that is the set of weights defining the same stability condition as $A$. We say that $\operatorname{el}_R$ is $A$-admissible if $A^R\in C_A$, and we set
$$
El_A:=\{\operatorname{el}_R\in El\mid A^R\in C_A\}.
$$
If $\operatorname{el}_R\in El_A$, then $A$ and $A^R$ define the same stability condition, so $\operatorname{el}_R$ induces an automorphism of $M_A$.

The main results of \cite{AraujoFassarellaKaurMassarenti2019} say the following. For the central weight $A_F=(1/2,\ldots,1/2)$, one has $\Aut(M_{A_F})=El\cong(\ZZ/2\ZZ)^{n-1}$. More generally, for weights $A$ in the interior of the chopped demi-hypercube of \cite[Corollary 1.4]{AraujoFassarellaKaurMassarenti2019}, that is for the weights for which $M_A$ is a small modification of the Fano model $M_{A_F}$, one has $\Aut(M_A)=El_A$.

The projective structures fixed above allow us to transfer the algebraic automorphism results for $M_A$ to all multiplicative incarnations of the same moduli problem.

\begin{thm}\label{thm:aut-central-multiplicative-polygons}
Let $n\geq 5$, and let $A_F=(1/2,\ldots,1/2)$. Then
$$
\Aut(\Mult_{A_F})
\cong
\Aut\left(\operatorname{Conf}^{\operatorname{mult}}_{A_F}(\PP^1)/SU(2)\right)
\cong
\Aut\left(\mathcal M_Q^{\operatorname{mult},c}(A_F)\right)
\cong
(\ZZ/2\ZZ)^{n-1}.
$$
This group is generated by the automorphisms induced by the elementary transformations $\operatorname{el}_R$, with $R\subset\{1,\ldots,n\}$ of even cardinality.
\end{thm}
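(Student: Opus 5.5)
The plan is to transport the automorphism group of the parabolic moduli space $M_{A_F}$ along the identifications of Theorem \ref{thm:multiplicative-summary}. The projective structures on $\Mult_{A_F}$, on $\operatorname{Conf}^{\operatorname{mult}}_{A_F}(\PP^1)/SU(2)$ and on $\mathcal M_Q^{\operatorname{mult},c}(A_F)$ were defined at the beginning of this section. They are, by fiat, the unique ones for which
$$
M_{A_F}\cong\Mult_{A_F}\cong\operatorname{Conf}^{\operatorname{mult}}_{A_F}(\PP^1)/SU(2)=\mathcal M_Q^{\operatorname{mult},c}(A_F)
$$
are isomorphisms of projective varieties. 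Conjugation by a fixed isomorphism $\psi:X\to Y$ of projective varieties gives a group isomorphism $\Aut(X)\to\Aut(Y)$, $f\mapsto\psi f\psi^{-1}$. Applying this to the composites above, all three automorphism groups in the statement become isomorphic to $\Aut(M_{A_F})$. The isomorphisms are canonical once the chosen identifications are fixed.

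The second step is to invoke the result of \cite{AraujoFassarellaKaurMassarenti2019} recalled above: for $n\geq 5$ one has $\Aut(M_{A_F})=El\cong(\ZZ/2\ZZ)^{n-1}$. I would check that the central weight satisfies the hypotheses there. For $n$ odd, $A_F$ is generic and $M_{A_F}$ is the smooth Fano model. For $n$ even, $A_F$ lies on walls and $M_{A_F}$ is a singular semistable model, which is also the case treated in \cite{AraujoFassarellaKaurMassarenti2019}. I would also note that every $\operatorname{el}_R$ fixes $A_F$, because $1-1/2=1/2$. Hence $A_F^R=A_F$, so each $\operatorname{el}_R$ induces an automorphism of $M_{A_F}$ and $El_{A_F}=El$. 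This gives the generation statement: transporting the $\operatorname{el}_R$ through $\psi$ yields generators of the three groups.

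Finally, I would describe the transported generators modularly. Through Mehta--Seshadri, an elementary modification at $R$ with weights $a_i\mapsto 1-a_i$ changes the local monodromy eigenvalues $\exp(\pm\pi i/2)=\pm i$ in a way that is compatible with the fixed class $\mathcal C_i$, up to the sign/twist normalization. This describes the action on tuples $(g_1,\ldots,g_n)$, and in eigenline form it amounts to exchanging $L_i$ and $L_i^\perp$ for $i\in R$, possibly with a central sign correction. This step is only a remark and is not needed for the isomorphism type.

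The main obstacle is the hypothesis check in the second step. Specifically, I need the cited $\Aut(M_{A_F})=El$ to hold for all $n\geq 5$, including the even case where $M_{A_F}$ is singular. I also need the action of $El$ on $M_{A_F}$ to be faithful, so that the group really is $(\ZZ/2\ZZ)^{n-1}$ rather than a quotient of it. I would settle faithfulness by looking at the fixed points of $\operatorname{el}_R$ on a general stable parabolic bundle. For $R\neq\emptyset$, $\operatorname{el}_R$ changes the cross-ratio data of the parabolic lines, so it moves a general point.
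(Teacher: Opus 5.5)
Your proposal is correct and is essentially the paper's proof. The three spaces carry, by definition, the projective structure transported from $M_{A_F}$, so their automorphism groups are conjugate to $\Aut(M_{A_F})=El\cong(\ZZ/2\ZZ)^{n-1}$ by \cite[Theorem 1.2]{AraujoFassarellaKaurMassarenti2019}. Your extra checks do not need separate arguments: faithfulness, $El_{A_F}=El$, and the even-$n$ case (where $A_F$ lies on walls, so Theorem~\ref{thm:multiplicative-summary} does not literally apply and the transport rests only on the homeomorphism of Proposition~\ref{prop:MS-multiplicative-new}) are all covered by the cited theorem and by how the transported structures are defined.
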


\begin{proof}
By \cite[Theorem 1.2]{AraujoFassarellaKaurMassarenti2019}, $\Aut(M_{A_F})=El\cong(\ZZ/2\ZZ)^{n-1}$. By Theorem \ref{thm:multiplicative-summary}, the spaces $\Mult_{A_F}$, $\operatorname{Conf}^{\operatorname{mult}}_{A_F}(\PP^1)/SU(2)$ and $\mathcal M_Q^{\operatorname{mult},c}(A_F)$ carry the projective structures transported from $M_{A_F}$. With these structures they are isomorphic to $M_{A_F}$ as projective varieties. Hence their algebraic automorphism groups are all identified with $\Aut(M_{A_F})=El$.
\end{proof}

\begin{thm}\label{thm:aut-general-multiplicative-polygons}
Let $A$ be a generic weight in the interior of the chopped demi-hypercube of \cite[Corollary 1.4]{AraujoFassarellaKaurMassarenti2019}. Then
$$
\Aut(\Mult_A)
\cong
\Aut\left(\operatorname{Conf}^{\operatorname{mult}}_A(\PP^1)/SU(2)\right)
\cong
\Aut\left(\mathcal M_Q^{\operatorname{mult},c}(A)\right)
\cong
El_A.
$$
\end{thm}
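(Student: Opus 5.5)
The proof will follow the pattern of Theorem \ref{thm:aut-central-multiplicative-polygons}: all the content sits in the algebraic automorphism group of the parabolic moduli space $M_A$, and the rest is transport of structure. The first step is to invoke the result of \cite{AraujoFassarellaKaurMassarenti2019} recalled above. For $A$ in the interior of the chopped demi-hypercube of \cite[Corollary 1.4]{AraujoFassarellaKaurMassarenti2019}, $M_A$ is a small modification of the Fano model $M_{A_F}$, and $\Aut(M_A)=El_A$. Here $El_A$ is the subgroup of elementary transformations $\operatorname{el}_R$, with $\sharp R$ even, such that $A^R\in C_A$. I will briefly check that genericity of $A$ is exactly what is needed. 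It ensures that $A$ lies in the interior of a Bauer--Mukai chamber $C_A$. For $\operatorname{el}_R\in El_A$, the weights $A$ and $A^R$ then define the same stability condition. Hence the isomorphism $M_A\to M_{A^R}$ induced by $\operatorname{el}_R$ is really an automorphism of $M_A$, and the definition of $El_A$ makes sense.

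The second step is to apply Theorem \ref{thm:multiplicative-summary} to the generic weight $A$, which gives identifications
$$
M_A\cong\Mult_A\cong\operatorname{Conf}^{\operatorname{mult}}_A(\PP^1)/SU(2)=\mathcal M_Q^{\operatorname{mult},c}(A).
$$
By the convention fixed at the beginning of Section \ref{Sec4}, the three right-hand spaces carry the unique projective structures that make these identifications isomorphisms of projective varieties. So if $\Psi:M_A\to X$ denotes any of these identifications, conjugation $f\mapsto\Psi\circ f\circ\Psi^{-1}$ is a group isomorphism $\Aut(M_A)\to\Aut(X)$. Composing with the first step gives the chain of isomorphisms with $El_A$.

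The only real obstacle is making sure the transported structures are well defined and compatible. The identification $\Mult_A\cong\operatorname{Conf}^{\operatorname{mult}}_A(\PP^1)/SU(2)$ comes from the $SU(2)$-equivariant eigenline bijections $\PP^1\cong\mathcal C_j$. The identification with $\mathcal M_Q^{\operatorname{mult},c}(A)$ is literally the identity on $m^{-1}(1)/SU(2)$ (Proposition \ref{prop:mult-quiver-polygon}). Both are therefore canonical homeomorphisms, independent of any choices. Consequently the structures transported to the three spaces are pairwise compatible, and the conjugation isomorphisms commute with one another.

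It will be worth remarking, though it is not needed for the statement, what an element of $El_A$ does on the multiplicative side. Replacing $a_i$ by $1-a_i$ for $i\in R$ corresponds, at the level of monodromy data, to multiplying $g_i$ by $-1$ for $i\in R$. This sign change maps $\mathcal C_i$ to the conjugacy class with angle $1-a_i$, and since $\sharp R$ is even it preserves the relation $g_1\cdots g_n=1$. This matches the modular description given in Theorem \ref{introthm:multiplicative}.
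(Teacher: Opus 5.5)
Your proposal is correct and follows the paper's proof exactly. Quote \cite[Corollary 1.4]{AraujoFassarellaKaurMassarenti2019} for $\Aut(M_A)=El_A$ and transport it by conjugation through the identifications of Theorem \ref{thm:multiplicative-summary}, using the projective structures fixed at the start of Section \ref{Sec4}.

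One caution on your optional final remark. The sign change $g_i\mapsto -g_i$ for $i\in R$ only gives a map $\Mult_A\to\Mult_{A^R}$. To obtain the automorphism of $\Mult_A$ you must compose it with $\Mult_{A^R}\cong M_{A^R}=M_A\cong\Mult_A$. When $A^R\neq A$, this composite is not a pointwise operation on the $g_i$, as the paper itself points out.
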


\begin{proof}
By \cite[Corollary 1.4]{AraujoFassarellaKaurMassarenti2019}, the algebraic automorphism group of $M_A$ is $El_A$. The projective structures on $\Mult_A$, $\operatorname{Conf}^{\operatorname{mult}}_A(\PP^1)/SU(2)$ and $\mathcal M_Q^{\operatorname{mult},c}(A)$ are defined so that the identifications with $M_A$ in Theorem \ref{thm:multiplicative-summary} are algebraic isomorphisms. Therefore all three algebraic automorphism groups are identified with $\Aut(M_A)=El_A$.
\end{proof}

Let us spell out the modular meaning of the automorphisms above. On the parabolic side, $\operatorname{el}_R$ modifies a parabolic bundle by an elementary transformation centered at the parabolic points indexed by $R$. It replaces the bundle by the kernel of the quotient supported at those points, changes the parabolic directions accordingly, and then twists by $\OO_{\PP^1}(\sharp R/2)$. The weights are changed by $A\mapsto A^R$. If $R$ is $A$-admissible, then $A$ and $A^R$ lie in the same chamber, so the stability condition is unchanged and $\operatorname{el}_R$ becomes an automorphism of $M_A$.

On the multiplicative polygon side, a point is a closed tuple $(g_1,\ldots,g_n)$ with $g_i\in\mathcal C_i$ and $g_1\cdots g_n=1$, modulo simultaneous conjugation. The automorphism induced by $\operatorname{el}_R$ is obtained as follows: take the parabolic bundle corresponding to $(g_1,\ldots,g_n)$ by the Mehta--Seshadri correspondence, apply $\operatorname{el}_R$, and then pass back to the unitary monodromy representation. Thus, for a general weight, the automorphism is not simply a pointwise operation on the matrices $g_i$.

For the central weight $A_F=(1/2,\ldots,1/2)$, the monodromy description becomes more concrete. If $R$ has even cardinality, then
$$
(g_1,\ldots,g_n)\longmapsto (g'_1,\ldots,g'_n),
\qquad
g'_i=
\begin{cases}
-g_i & \text{if } i\in R,\\
g_i & \text{if } i\notin R
\end{cases}
$$
preserves the product relation since $\sharp R$ is even. It also preserves the conjugacy classes, since for $a_i=1/2$ the eigenvalues are $i$ and $-i$.

On the multiplicative configuration side, write $\lambda_j=\exp(\pi i a_j)$ and let $g_j(L_j)$ be the element of $SU(2)$ with eigenvalue $\lambda_j$ on $L_j$ and $\lambda_j^{-1}$ on $L_j^\perp$. A point of $\operatorname{Conf}^{\operatorname{mult}}_A(\PP^1)/SU(2)$ is represented by a configuration $(L_1,\ldots,L_n)$ satisfying $g_1(L_1)\cdots g_n(L_n)=1$. The automorphism induced by $\operatorname{el}_R$ is obtained by translating this configuration into the monodromy tuple, applying the parabolic elementary transformation through Mehta--Seshadri, and translating back to eigenlines. In the central case this is simply
$$
L_i\longmapsto
\begin{cases}
L_i^\perp & \text{if } i\in R,\\
L_i & \text{if } i\notin R,
\end{cases}
$$
since $-g_i(L_i)=g_i(L_i^\perp)$ when $a_i=1/2$.

On the compact multiplicative quiver side, a point of $\mathcal M_Q^{\operatorname{mult},c}(A)$ is the same closed tuple of conjugacy-class elements, interpreted as a compact multiplicative representation of the star-shaped quiver. Thus the automorphisms above act on the quiver quotient by the same monodromy procedure: pass from the quiver point to the corresponding parabolic bundle, apply the elementary transformation, and return to the compact multiplicative quiver quotient. In the central case this is the sign change on the legs indexed by $R$.

\subsection*{Automorphisms in the additive case}

We now discuss the additive analogue of the automorphism results obtained in the multiplicative case. Let
$$
Y_{n-3}:=\Bl_{q_1,\ldots,q_{n-1}}\PP^{n-3},
$$
where the points $q_i$ are general. We only consider chambers contained in $\Mov(Y_{n-3})$, that is chambers corresponding to small $\QQ$-factorial modifications of $Y_{n-3}$.

Let $\mathcal C\subset\Mov(Y_{n-3})$ be such a chamber, and let $\phi_{\mathcal C}:Y_{n-3}\dashrightarrow Y_{\mathcal C}$ be the corresponding small $\QQ$-factorial modification. Through the additive dictionary, $Y_{\mathcal C}$ is identified with a GIT quotient of configurations of $n$ ordered points on $\PP^1$, with a Hassett--GIT moduli space, with the fixed-trivial-bundle parabolic quotient, with the ordinary additive quiver moduli space, and with the additive polygon space.

We first describe the relevant subgroup of $S_n$ intrinsically in terms of the chamber. Let $A=(a_1,\ldots,a_n)$ be a weight vector in the relative interior of the GIT chamber corresponding to $\mathcal C$, normalized by $a_1+\cdots+a_n=2$. The chamber is cut out by the signs of the inequalities $\sum_{i\in I}a_i<1,\, I\subset\{1,\ldots,n\}$. Thus we associate to $\mathcal C$ the simplicial complex
$$
\mathcal K_{\mathcal C}:=
\left\{
I\subset\{1,\ldots,n\}
\mid
\sum_{i\in I}a_i<1
\right\}.
$$
This is independent of the choice of $A$ in the chamber. Its subsets are exactly the collections of markings that are allowed to collide in the corresponding Hassett--GIT moduli problem. Define
$$
\Gamma_{\mathcal C}:=\Aut(\mathcal K_{\mathcal C})
=
\left\{
\sigma\in S_n
\mid
I\in\mathcal K_{\mathcal C}
\Longleftrightarrow
\sigma(I)\in\mathcal K_{\mathcal C}
\right\}.
$$
That is
$$
\Gamma_{\mathcal C}
=
\left\{
\sigma\in S_n
\mid
\sum_{i\in I}a_i<1
\Longleftrightarrow
\sum_{i\in\sigma(I)}a_i<1
\text{ for every }I\subset\{1,\ldots,n\}
\right\}.
$$
In particular, $\Gamma_{\mathcal C}$ contains the product of symmetric groups permuting equal weights, but it may be larger if the threshold complex $\mathcal K_{\mathcal C}$ has additional symmetries.

\begin{thm}\label{thm:additive-small-aut}
Let $\mathcal C\subset\Mov(Y_{n-3})$ be a maximal chamber of the Mori chamber decomposition, and let $Y_{\mathcal C}$ be the corresponding small $\QQ$-factorial modification of $Y_{n-3}$. Then $\Aut(Y_{\mathcal C})\cong \Gamma_{\mathcal C}$. Consequently, for any additive moduli realization corresponding to $\mathcal C$ one has
$$
\Aut(Y_{\mathcal C})
\cong
\Aut((\PP^1)^n_{\bb})
\cong
\Aut(\cM_{0,\avec}^{\mathrm{GIT}})
\cong
\Aut(\Mtriv_{\bb})
\cong
\Aut(\mathcal M_Q(\bb))
\cong
\Aut(\Padd_{\bb})
\cong
\Gamma_{\mathcal C}.
$$
\end{thm}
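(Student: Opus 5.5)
The plan is to prove the two inclusions $\Gamma_{\mathcal C}\subset\Aut(Y_{\mathcal C})$ and $\Aut(Y_{\mathcal C})\subset\Gamma_{\mathcal C}$ separately, working on the configuration side $(\PP^1)^n_{\bb}$. The chain of isomorphisms then follows from Theorem \ref{thm:additive-summary}, since the first five spaces there are isomorphic projective varieties and $\Padd_{\bb}$ carries the transported structure.

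For $\Gamma_{\mathcal C}\subset\Aut$: given $\sigma\in\Gamma_{\mathcal C}$, relabelling $(x_1,\ldots,x_n)\mapsto(x_{\sigma^{-1}(1)},\ldots,x_{\sigma^{-1}(n)})$ commutes with the diagonal $\PGL_2$-action. It sends $((\PP^1)^n)^{ss}(\bb)$ to $((\PP^1)^n)^{ss}(\sigma\bb)$. By the Hilbert--Mumford description, semistability depends only on which subsets $I$ satisfy $b_I\le|\bb|/2$. In a maximal chamber no equality holds, so this data is exactly the complex $\mathcal K_{\mathcal C}$, and $\sigma$ preserves it. Hence $\sigma\bb$ lies in the same chamber, the two quotients coincide, and $\sigma$ descends to an automorphism of $(\PP^1)^n_{\bb}$. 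This gives an injective homomorphism $\Gamma_{\mathcal C}\to\Aut(Y_{\mathcal C})$. Injectivity holds for $n\ge5$ because a nontrivial relabelling moves a general configuration, since cross-ratios distinguish labels.

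For the reverse inclusion: $Y_{\mathcal C}$ is a small $\QQ$-factorial modification of the Mori dream space $Y_{n-3}$, so $\Pic(Y_{\mathcal C})=\Pic(Y_{n-3})$, and any $\phi\in\Aut(Y_{\mathcal C})$ acts on $N^1$ preserving $\Eff$, $\Mov$, the Mori chamber decomposition of Theorem \ref{thm:MCD-rplus2}, and the chamber $\mathcal C=\Nef(Y_{\mathcal C})$. Composing with $\phi_{\mathcal C}$ gives a pseudo-automorphism of $Y_{n-3}$. I would then use the known description of $\PsAut$ and the action on $\Pic$ of blow-ups of $\PP^{n-3}$ at $n-1$ general points. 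These are Mori dream spaces whose Cox ring is the ring of invariants for $(\PP^1)^n$, by the Gelfand--MacPherson and Kumar identification. The action of $\PsAut$ on $N^1$ factors through the Weyl-type symmetry $S_n$ permuting the $n$ markings, realized via standard Cremona transformations mixing the label $n$ with the others. Concretely, the extremal rays of $\Eff(Y_{n-3})$ from Theorem \ref{thm:MCD-rplus2} correspond to boundary divisors $\Delta_{ij}$ of $\overline{M}_{0,n}$-type, labelled by pairs, so a linear automorphism of $\Eff$ induces a permutation $\sigma\in S_n$. Preservation of $\mathcal C$ is exactly $\sigma\in\Gamma_{\mathcal C}$. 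The kernel of $\Aut(Y_{\mathcal C})\to GL(N^1)$ is trivial: such $\phi$ fixes every $E_i$, hence descends to an automorphism of $\PP^{n-3}$ fixing $n-1$ general points. Those points are a projective frame, so $\phi$ is the identity. Thus $\phi$ coincides with the relabelling automorphism of $\sigma$.

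The main obstacle is showing that the image of $\Aut(Y_{\mathcal C})$ in $GL(N^1)$ lands in the $S_n$ of relabellings. This requires the symmetry group of the polyhedral cone $\Eff(Y_{n-3})$ with its chamber fan to be exactly $S_n$, with no extra linear symmetries. I would first verify this from the explicit inequalities in Theorem \ref{thm:MCD-rplus2}. The effective-cone facets are $y+x_i\ge0$ for $i\le n-1$, together with $ry+\sum x_i\ge0$, giving $n$ facets that must correspond to the $n$ labels. So an automorphism of $\Eff$ permutes these facets, which gives a homomorphism to $S_n$. Injectivity of this homomorphism on the group fixing $K_{Y}$ should follow because any automorphism fixes the anticanonical class. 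Small cases $n\le4$, where $Y_{n-3}$ is a point or $\PP^1$, would be excluded or treated directly.
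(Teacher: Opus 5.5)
Your route is genuinely different from the paper's. The paper never constructs the automorphisms by hand. It quotes $\Aut(\Sigma_{n-3})\cong S_n$ and $\PsAut(\Sigma_{n-3})=\Aut(\Sigma_{n-3})$ for the symmetric quotient, and transports this through the small map $\phi_F$ to get $\PsAut(Y_{n-3})\cong S_n$. It then identifies $\Aut(Y_{\mathcal C})$ with the stabilizer of $\overline{\mathcal C}=\Nef(Y_{\mathcal C})$, rewritten as $\Aut(\mathcal K_{\mathcal C})$.

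You instead prove the lower bound by explicit relabelling on $(\PP^1)^n_{\bb}$. You prove the upper bound via lattice symmetries of $\Eff(Y_{n-3})$ plus rigidity, which in effect reproves $\PsAut(Y_{n-3})\hookrightarrow S_n$ without the Fano model. The paper's argument is shorter and yields all of $\PsAut(Y_{n-3})$. Yours is self-contained, realizes each automorphism as a relabelling, and correctly notes that injectivity of $\Gamma_{\mathcal C}\to\Aut(Y_{\mathcal C})$ needs $n\geq 5$.

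Both arguments rely on the same chamber dictionary with walls $a_I=1$. You should also make explicit that descent of $L_{\bb}$ identifies $N^1$ with weight space, with $\Pic$ the lattice of $\bb\in\ZZ^n$ with even sum, compatibly with relabellings. You need this so that pullback by the relabelling automorphism $\tau_\sigma$ acts as the permutation matrix of $\sigma$, which is what lets you conclude $\phi=\tau_\sigma$.

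Two steps need repair.

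First, the crux is set up on a wrong cone. Use the weight coordinates $b_i=y+x_i$ for $i\leq n-1$ and $b_n=(n-3)y+\sum_k x_k$. The $n$ inequalities you quote from Theorem \ref{thm:MCD-rplus2} only say $b_i\geq 0$ in these coordinates. They cut out a simplicial cone containing the non-effective class $-H+\sum_i E_i$. The actual cone is
$$
\Eff(Y_{n-3})=\Bigl\{\,0\leq b_i\leq \tfrac12\textstyle\sum_j b_j\ \text{ for all } i\,\Bigr\},
$$
so one must add $y\geq 0$ and $(n-3)y+\sum_{k\neq i}x_k\geq 0$. This cone has $2n$ facets and $\binom n2$ extremal rays $e_i+e_j$. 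Concretely, $E_i\leftrightarrow e_i+e_n$, and $H-\sum_{k\neq i,j}E_k\leftrightarrow e_i+e_j$ for $i,j\leq n-1$. This agrees with your remark that rays are labelled by pairs, but not with ``$n$ facets''.

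The correct argument runs as follows. The facet $b_k=0$ contains $\binom{n-1}{2}$ rays, while the facet $b_k=\tfrac12\sum_j b_j$ contains only the $n-1$ rays $e_k+e_j$. For $n\geq 5$ these counts differ, so a lattice automorphism $g$ of $\Eff$ permutes the ``star'' facets. Hence $g(e_i+e_j)=e_{\sigma(i)}+e_{\sigma(j)}$ for some $\sigma\in S_n$. Since these vectors span, $g$ is the permutation matrix of $\sigma$, and no appeal to $K_Y$ is needed.

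Second, your trivial-kernel argument only makes sense on $Y_{n-3}$, since $Y_{\mathcal C}$ does not map to $\PP^{n-3}$. Conjugate $\phi$ by $\phi_{\mathcal C}$ to get a pseudo-automorphism of $Y_{n-3}$ acting trivially on $N^1$. It preserves an ample class, hence is biregular. It then fixes each rigid $E_i$ and descends to a projectivity fixing the frame $q_1,\ldots,q_{n-1}$, so it is the identity.
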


\begin{proof}
Let $\Sigma_{n-3}$ be the symmetric GIT quotient $\Sigma_{n-3}:=(\PP^1)^n\quot\PGL_2$. This is the Fano chamber model in the additive Mori chamber decomposition. By \cite[Theorems 4.4 and 4.7]{BolognesiMassarenti2021}, $
\Aut(\Sigma_{n-3})\cong S_n$. Moreover, for the group of pseudo-automorphisms we have $\PsAut(\Sigma_{n-3})=\Aut(\Sigma_{n-3})$. Indeed, if $n$ is even then $\Sigma_{n-3}$ has Picard rank one, while if $n$ is odd then $\Sigma_{n-3}$ is smooth Fano; in both cases this follows from \cite[Proposition 7.2]{Massarenti2020PLMS}.

Let $\phi_F:Y_{n-3}\dashrightarrow \Sigma_{n-3}$ be the birational map corresponding to the Fano chamber. It is an isomorphism in codimension one. Hence conjugation by $\phi_F$ gives
$$
\PsAut(Y_{n-3})
\cong
\PsAut(\Sigma_{n-3})
=
\Aut(\Sigma_{n-3})
\cong
S_n.
$$
Under this identification, the group $S_n$ is generated by the obvious permutations of the $n-1$ points $q_1,\ldots,q_{n-1}$ together with the standard Cremona transformations. Now let $\phi_{\mathcal C}:Y_{n-3}\dashrightarrow Y_{\mathcal C}$ be the small modification associated with $\mathcal C$. Any automorphism of $Y_{\mathcal C}$ induces a pseudo-automorphism of $Y_{n-3}$ by conjugation: $
\psi
\mapsto
\phi_{\mathcal C}^{-1}\circ\psi\circ\phi_{\mathcal C}$. Thus $\Aut(Y_{\mathcal C})$ is a subgroup of $\PsAut(Y_{n-3})\cong S_n$.

A pseudo-automorphism $\sigma\in S_n$ induces a biregular automorphism of $Y_{\mathcal C}$ if and only if it preserves the nef cone of $Y_{\mathcal C}$, pulled back to $\Mov(Y_{n-3})$. This pulled-back nef cone is precisely the closure of $\mathcal C$. Hence
$$
\Aut(Y_{\mathcal C})
=
\{\sigma\in S_n\mid \sigma(\mathcal C)=\mathcal C\}.
$$
It remains only to rewrite this stabilizer in modular terms. The map from divisor classes on $Y_{n-3}$ to weights sends the Mori chamber decomposition of $\Mov(Y_{n-3})$ to the VGIT chamber decomposition for configurations of points on $\PP^1$. The walls are the hyperplanes $
\sum_{i\in I}a_i=1$. Therefore a permutation $\sigma\in S_n$ preserves $\mathcal C$ if and only if it preserves the sign pattern of all inequalities $\sum_{i\in I}a_i<1$. This is exactly the condition $
\sigma\in\Aut(\mathcal K_{\mathcal C})=\Gamma_{\mathcal C}$. This proves $
\Aut(Y_{\mathcal C})\cong\Gamma_{\mathcal C}$.

The remaining identifications follow from the additive dictionary: for weights in the chamber corresponding to $\mathcal C$, the spaces $
Y_{\mathcal C},\,
(\PP^1)^n_{\bb},\,
\cM_{0,\avec}^{\mathrm{GIT}},\,
\Mtriv_{\bb},\,
\mathcal M_Q(\bb),\,
\Padd_{\bb}
$ are naturally identified as projective varieties, the last one by Kempf--Ness. Hence their algebraic automorphism groups are all identified with $\Aut(Y_{\mathcal C})$.
\end{proof}

\begin{Remark}
The description by $\mathcal K_{\mathcal C}$ records exactly which subsets of markings may collide. Thus $\Gamma_{\mathcal C}$ is the group of relabellings of the markings preserving the collision combinatorics of the moduli problem.
\end{Remark}

Let $\sigma\in\Gamma_{\mathcal C}$. On the configuration quotient $(\PP^1)^n_{\bb}$, the automorphism is induced by relabelling the marked points: $
(x_1,\ldots,x_n)
\mapsto
(x_{\sigma^{-1}(1)},\ldots,x_{\sigma^{-1}(n)})$, followed by the canonical identification of the GIT quotients attached to two weights in the same chamber. If $\sigma$ actually preserves the weight vector $\bb$, this is literally a relabelling of configurations with the same linearization.

On the Hassett--GIT space $\cM_{0,\avec}^{\mathrm{GIT}}$, the same automorphism sends $
(C,x_1,\ldots,x_n)
\mapsto
(C,x_{\sigma^{-1}(1)},\ldots,x_{\sigma^{-1}(n)})$. The condition $\sigma\in\Gamma_{\mathcal C}$ says precisely that this relabelling preserves the stability rule, that is the subsets of markings that are allowed to collide.

On the fixed-trivial-bundle parabolic quotient $\Mtriv_{\bb}$, a point is represented by the trivial bundle $\OO_{\PP^1}^{\oplus 2}$ together with parabolic directions at the marked points. The automorphism relabels the marked points and their parabolic lines. Again, if the weights are not literally fixed, the relabelling is followed by the canonical chamber identification.

On the additive quiver side, $\mathcal M_Q(\bb)$ is the GIT quotient of the star-shaped quiver with one central vertex and $n$ legs. The automorphism induced by $\sigma$ permutes the legs of the quiver. The condition $\sigma\in\Gamma_{\mathcal C}$ says that this permutation preserves the chamber of the stability parameter. 

On the additive polygon side, $\Padd_{\bb}$ parametrizes closed polygons $
u_1+\cdots+u_n=0,\, |u_i|=b_i$, modulo rotations. If $\sigma$ preserves the length vector, it acts by permuting the sides: $
(u_1,\ldots,u_n)
\mapsto
(u_{\sigma^{-1}(1)},\ldots,u_{\sigma^{-1}(n)})$. For a general $\sigma\in\Gamma_{\mathcal C}$ the side lengths are not necessarily fixed pointwise; in that case the modular meaning is relabelling of the corresponding configuration or quiver data, followed by the canonical identification of the models inside the same VGIT chamber.

\subsection*{A non-small example}

The restriction to small modifications is essential. Consider a divisorial chamber whose model is $
Y':=\Bl_{q_1,\ldots,q_{n-2}}\PP^{n-3}$. This is obtained from $Y_{n-3}=\Bl_{q_1,\ldots,q_{n-1}}\PP^{n-3}$ by contracting one exceptional divisor, say $E_{n-1}$. Hence the map $
Y_{n-3}\rightarrow Y'$ is not small.

In the weight picture this corresponds to crossing a boundary wall of the movable cone. For instance, with distinguished label $n$, the condition $
a_{n-1}+a_n=1$ is the wall on which the divisor corresponding to the collision of the markings $n-1$ and $n$ is contracted. In the Hassett--GIT interpretation, this means that the boundary divisor where these two markings collide becomes strictly semistable and is collapsed in the quotient.

The variety $Y'$ is toric. Indeed, after a projective change of coordinates, the $n-2$ points may be taken to be the coordinate points of $\PP^{n-3}$. Thus the blow-up is a toric blow-up. Consequently $\Aut(Y')$ is no longer finite. At least the dense torus $
(\CC^*)^{n-3} $ acts on $Y'$, and the finite label symmetries of the blown-up coordinate points act as well. More precisely, for $n\geq 5$ the toric automorphism group contains $
(\CC^*)^{n-3}\rtimes S_{n-2}$, and the full toric symmetry also includes the fan symmetries of the corresponding toric fan.

Modularly, these additional automorphisms are not only relabellings of the marked points. On the dense configuration locus, one may normalize three points on $\PP^1$ and use cross-ratio-type coordinates for the remaining points. The torus acts by rescaling these coordinates. Thus, in the non-small chamber, the quotient has forgotten enough boundary information that new positive-dimensional automorphisms appear. This is exactly why the theorem above is formulated only for small modifications: in the small chamber case the automorphism group is the finite chamber symmetry group $\Gamma_{\mathcal C}$, while after divisorial contractions new automorphisms may appear.

In terms of additive polygons, the additional automorphisms of the non-small toric model are not relabellings of the sides. On a dense open subset one can describe a polygon by choosing a triangulation, that is by adding the diagonals
$
d_k=u_1+\cdots+u_{k+1}.
$
Once the lengths of the sides and of these diagonals are fixed, the remaining parameters are the angles describing how the triangles are glued along the diagonals.

The compact torus $(S^1)^{n-3}$ changes these angles. Geometrically, this means that one rotates one part of the polygon around a chosen diagonal, while keeping all side lengths and the closing condition
$
u_1+\cdots+u_n=0
$
unchanged. In the toric model this action extends to the algebraic torus $(\CC^*)^{n-3}$. The compact part changes the gluing angles of the polygon, while the non-compact part is more naturally seen in toric, or cross-ratio, coordinates, where it rescales the corresponding coordinates.

\begin{Example}\label{ex:aut-Ynminus3}
Consider the chamber corresponding to the original model $
Y_{n-3}=\Bl_{q_1,\ldots,q_{n-1}}\PP^{n-3}$. We use the Kapranov--Kumar labelling in which the points $q_1,\ldots,q_{n-1}$ correspond to the labels $1,\ldots,n-1$, while the label $n$ is the distinguished label. In the polygon interpretation, the labels $1,\ldots,n$ are the labels of the sides $
u_1,\ldots,u_n,
\,
u_1+\cdots+u_n=0,
\,
|u_i|=b_i$. Thus the model $Y_{n-3}$ singles out the side labelled by $n$.

Let $A=(a_1,\ldots,a_n)$ be a normalized weight vector in the chamber corresponding to $Y_{n-3}$, so that $\sum_i a_i=2$. In divisor coordinates this chamber is the nef chamber of $Y_{n-3}$. Under the weight map it is described by $
a_i+a_n<1 \text{ for all }i=1,\ldots,n-1$, and $
a_i+a_j+a_n>1 \text{ for all }1\leq i<j\leq n-1$.
Therefore the associated collision complex is
$$
\mathcal K_Y=
\left\{
I\subset\{1,\ldots,n\}
\mid
\sum_{i\in I}a_i<1
\right\}.
$$
More explicitly, $
I\in\mathcal K_Y$ if and only if either $n\notin I$ and $\sharp I\leq n-3$, or $n\in I$ and $\sharp I\leq 2$. Hence, for $n\geq 6$, the vertex $n$ is characterized intrinsically by the fact that no face containing $n$ has cardinality at least $3$. Thus every automorphism of $\mathcal K_Y$ fixes $n$, and we obtain $\Aut(\mathcal K_Y)\cong S_{n-1}$. By Theorem \ref{thm:additive-small-aut},
$$
\Aut(Y_{n-3})\cong S_{n-1}
\qquad
\text{for }n\geq 6.
$$
Modularly, this group permutes the first $n-1$ labels and fixes the distinguished label $n$. On the configuration quotient it acts by $
(x_1,\ldots,x_{n-1},x_n)
\mapsto
(x_{\sigma^{-1}(1)},\ldots,x_{\sigma^{-1}(n-1)},x_n),
\,
\sigma\in S_{n-1}$. On the Hassett--GIT space it relabels the first $n-1$ markings and keeps the distinguished marking fixed. On the additive quiver moduli space it permutes the first $n-1$ legs of the star-shaped quiver and fixes the leg labelled by $n$.

On the additive polygon space, the same operation relabels the first $n-1$ sides and keeps the side $u_n$ fixed:
$$
(u_1,\ldots,u_{n-1},u_n)
\longmapsto
(u_{\sigma^{-1}(1)},\ldots,u_{\sigma^{-1}(n-1)},u_n).
$$
If the side lengths satisfy $b_{\sigma(i)}=b_i$ for all $i$, this is literally an automorphism of the polygon space with fixed length vector $\bb$. For a general length vector in the same chamber, it is understood as the relabelling isomorphism from $\Padd_{\bb}$ to $\Padd_{\sigma\bb}$ followed by the canonical identification of the two GIT models, since $\bb$ and $\sigma\bb$ lie in the same chamber.

When $n=5$, the situation is exceptional. In this case $
Y_2=\Bl_{q_1,\ldots,q_4}\PP^2$ is the del Pezzo surface of degree $5$. The collision complex is the complete graph on the five labels, so its automorphism group is $S_5$, not only the subgroup fixing the distinguished label. Accordingly, $
\Aut(Y_2)\cong S_5$. Geometrically, the extra automorphisms are the classical Cremona symmetries of the degree $5$ del Pezzo surface, and modularly they allow one to change the distinguished label.
\end{Example}

\bibliographystyle{amsalpha}
\bibliography{Biblio}
\end{document}